\numberwithin{equation}{section}
\theoremstyle{plain}
\newtheorem{theorem}[equation]{Theorem}
\newtheorem{lemma}[equation]{Lemma}
\newtheorem{corollary}[equation]{Corollary}
\newtheorem{proposition}[equation]{Proposition}
\theoremstyle{definition}
\theoremstyle{remark}
\newtheorem{remark}[equation]{Remark}
\newcommand{\fiint}{\operatornamewithlimits{\fint\!\!\!\!\fint}}
\newcommand{\dv}{\operatorname{div}}
\newcommand{\R}{\operatorname{Re}}
\newcommand{\supp}{\operatorname{supp}}
\newcommand{\tr}{\operatorname{Tr}}
\newcommand{\rn}{{\mathbb{R}^n}}
\newcommand{\co}{\mathbb{C}}
\newcommand{\reu}{{\mathbb{R}^{n+1}_+}}
\newcommand{\repm}{{\mathbb{R}^{n+1}_\pm}}
\newcommand{\ree}{{\mathbb{R}^{n+1}}}
\newcommand{\RR}{{\mathbb{R}}}
\newcommand{\NN}{{\mathbb{N}}}
\newcommand{\eps}{\varepsilon}
\newcommand{\p}{\mathcal{P}}
\newcommand{\s}{\mathcal{S}}
\newcommand{\A}{\mathcal{A}}
\newcommand{\C}{\mathcal{C}}
\newcommand{\N}{\widetilde{N}}
\newcommand{\bp}{\noindent {\it Proof}.\,\,}
\newcommand{\ep}{\hfill$\Box$ \vskip 0.08in}
\def\ring{\mathaccent"0017 }
\def\div{\mathop{\operatorname{div}}}
\begin{document}

\title[Regularity problem]{The Regularity problem for second order elliptic operators with complex-valued bounded measurable coefficients}

\author{Steve Hofmann}

\address{Steve Hofmann
\\
Department of Mathematics
\\
University of Missouri
\\
Columbia, MO 65211, USA} \email{hofmanns@missouri.edu}

\author{Carlos Kenig}

\address{Carlos Kenig
\\
Department of Mathematics
\\
University of Chicago
\\
Chicago, IL,  60637 USA} \email{cek@math.chicago.edu}

\author{Svitlana Mayboroda}

\address{Svitlana Mayboroda
\\
School of Mathematics
\\
University of Minnesota
\\
Minneapolis, MN, 55455  USA} \email{svitlana@math.umn.edu}

\author{Jill Pipher}

\address{Jill Pipher
\\
Department of Mathematics
\\
Brown University
\\
Providence, RI,   USA} \email{jpipher@math.brown.edu}

\thanks{Each of the authors was supported by NSF.  
\\
\indent
This work has been possible thanks to the support and hospitality of the \textit{University of Chicago},  the \textit{University of Minnesota}, the \textit{University of Missouri},  \textit{Brown University},  the \textit{Institute for Computational and Experimental Research in Mathematics}, and the \textit{American Institute of Mathematics}. The authors would like to express their gratitude to these institutions.}

\maketitle

\begin{abstract}  The present paper establishes a certain duality between the Dirichlet  and  Regularity problems for elliptic operators with $t$-independent complex bounded measurable coefficients ($t$ being  the transversal direction to the boundary). To be precise, we show that  the Dirichlet boundary value problem is solvable in $L^{p'}$, subject to the square function and  non-tangential maximal function estimates, if and only if the corresponding Regularity problem is solvable in $L^p$. Moreover, the solutions admit layer potential representations.

In particular, we prove that for any elliptic operator with  $t$-independent  real (possibly non-symmetric)  coefficients there exists a $p>1$ such that the Regularity problem is well-posed in $L^p$.  
  \end{abstract}


\section{Introduction}

We consider a divergence form elliptic operator
$$L:=-\dv A(x)\nabla,$$
defined in $\mathbb{R}^{n+1}=\{(x,t), \,x\in\RR^n,\,t>0\}$. 
Here $A$ is an $(n+1)\times(n+1)$ matrix of bounded, complex-valued, $t$-independent coefficients,  
which satisfies the 
uniform ellipticity condition
\begin{equation}
\label{eq1.1} \lambda|\xi|^{2}\leq\,\R\, \langle A(x)\,\xi,\xi\rangle
:= \R\,\sum_{i,j=1}^{n+1}A_{ij}(x)\,\xi_{j} \bar \xi_{i}, \quad
  \Vert A\Vert_{L^{\infty}(\mathbb{R}^{n})}\leq\lambda^{-1},
\end{equation}
 for some $\lambda>0$, and for all $\xi\in\co^{n+1}$, $x\in\mathbb{R}^{n}$. The number $\lambda$ in \eqref{eq1.1} will be referred as the ellipticity parameter of $L$.  
As usual, the divergence form equation is interpreted in the weak sense, i.e., we say that $Lu=0$
in a domain $\Omega$ if $u\in W^{1,2}_{loc}(\Omega)$ and 
\begin{equation}\label{eq1.2}\iint_{\Omega} A \nabla u \cdot \overline{\nabla \Psi} = 0\,,
\end{equation}
\noindent for all  complex valued $\Psi \in C_0^\infty(\Omega)$.   
For us, $\Omega$ will be a Lipschitz graph domain
\begin{equation}\label{eq1.3}
\Omega_\psi:=\{(x,t)\in\ree: t>\psi(x)\}\,,
\end{equation}
where $\psi:\rn\to\RR$ is a Lipschitz function, 
or more specifically (but without loss of generality), $\Omega$ will be the half-space
$\reu:=\{(x,t)\in\mathbb{R}^{n}\times(0,\infty)\}$. We shall return to this point below.

Let us start by defining the weak solutions to the Dirichlet problem for the case of the nice data, that is, consider $Lu=0$ in $\reu$, $u\Big|_{\RR^n}=f \in C_0^\infty(\rn)$. In principle, taking a harmonic extension of $f$ to $\reu$ (denote it by $w$) and then using the Lax-Milgram lemma to resolve $Lu=-Lw$ with zero trace on the boundary, we get a solution in $\dot W^{1,2}(\reu)$, the factor space of functions modulo constants with the seminorm given by the norm of the gradient in $L^2(\reu)$. It is somewhat more convenient though to use a non-homogeneous space. One option (and here  
we follow an approach in \cite{KR}) is to work in the following framework. Let $\widetilde W^{1,2}(\reu)$ denote the space of functions $F$ for which 
$$ \|F\|_{\widetilde W^{1,2}(\reu)}:=\left(\iint_{\reu} |F(X)|^2\,\frac{dX}{1+|X|^2} \,+\, \iint_\reu |\nabla F(X)|^2\,dX\right)^{1/2}<\infty.$$ One can define the trace operator, for instance, as $\rm{Tr} : \widetilde{W}^{1,2}(\reu) \to \widetilde L^2(\RR^n)$, a continuous extension  of the restriction to the boundary operator, with $\widetilde L^2(\RR^n)$ denoting the space of functions $f$ on $\RR^n$ with 
$$\|f\|_{\widetilde L^2(\RR^n)}=\left(\int_{\RR^n}|f(x)|^2\,\frac{dx}{1+|x|}\right)^{1/2}<\infty, $$
\noindent (follow, e.g., the argument in \cite{E}, p. 272).  Notation $\widetilde  W^{1,2}_0(\reu)$ stands for  the space of functions in $\widetilde  W^{1,2}(\reu)$ with trace zero. In Lemma~\ref{l2.4} below we present a detailed argument showing that, in particular, for every $f\in C_0^\infty(\rn)$ there exists a unique solution $u\in \widetilde W^{1,2}(\reu)$ to the boundary problem 
\begin{equation}
\begin{cases} Lu=0\text{ in }\mathbb{R}_{+}^{n+1}\\ 
\lim_{t\to 0}u =f,
\end{cases}\label{eq1.0}\end{equation} 
where $\lim_{t\to 0}u$ is interpreted in the sense of the trace operator as discussed above. This solution will be referred to as the {\it weak} solution hereafter.

We say that the Dirichlet problem \eqref{Dp'} for $L$ is solvable for some $1<p'<\infty$ if for every $f\in C_0^\infty(\rn)$ the weak solution to the boundary problem \eqref{eq1.0} satisfies the non-tangential maximal function estimate 
\begin{equation}\label{eq1.4}
\|N_*(u)\|_{L^{p'}(\mathbb{R}^n)}\leq C \|f\|_{L^{p'}(\rn)}.
\end{equation}
Here,
$$N_* F(x)  \equiv \sup_{(z,t)\in \Gamma(x)}|F(z,t)|, $$ 
with $\Gamma(x):
=\{(y,t)\in\mathbb{R}_{+}^{n+1}:|y-x|<t\}$. Respectively, we write that 
\begin{equation}
\begin{cases} Lu=0\text{ in }\mathbb{R}_{+}^{n+1}\\ 
\lim_{t\to 0}u =f,\\ 
\|N_*(u)\|_{L^{p'}(\mathbb{R}^n)}<\infty\,,
\end{cases}\tag{$D_{p'}$}\label{Dp'}\end{equation}
is solvable. Note that, modulo some necessary explanations of the essence of the weak solution caused by the generality of $L$ at hand and provided above and in Section~\ref{sPrelim},  this definition coincides with the one classically used in this context (see, e.g., \cite{K}).

We say that the Dirichlet boundary value problem is well-posed if for every $f\in L^{p'}(\rn)$ there exists a unique solution to \eqref{Dp'} satisfying \eqref{eq1.4}, with $\lim_{t\to 0}u =f$ interpreted in the sense that $u(\cdot, t)$ converges to $f$ as $t\to 0$ strongly in $L^{p'}(\rn)$, and if moreover, the corresponding solution in the special case of $f\in C_0^\infty(\rn)$ coincides with the weak solution defined above in $\widetilde W^{1,2}(\reu)$.

We say that the Regularity problem \eqref{Rp} for $L$ is solvable for some $1<p<\infty$ if for every $f\in C_0^\infty(\rn)$ the weak solution to the boundary problem \eqref{eq1.0} satisfies the non-tangential maximal function estimate
\begin{equation}\label{eq1.5}
 \|\N(\nabla u)\|_{L^p(\RR^n)}\leq C \| \nabla_\| f\|_{L^p(\RR^n)},
 \end{equation}
where the modified non-tangential maximal function is given by
$$ \widetilde NF(x)  \equiv\sup_{(z,t)\in \Gamma(x)}\left(\fiint_{W(z, t)}|F(y,s)|^{2}dyds\right)^{\frac{1}{2}},$$
and   $W(x,t) \equiv \Delta(x,t) \times (t/2, 3t/2)$, $\Delta(x,t) \equiv \left\{y \in \rn: \left| x-y \right| < t\right\}$.  Respectively, we write that 
 \begin{equation}
\begin{cases} Lu=0\text{ in }\mathbb{R}_{+}^{n+1}\\ 
\lim_{t\to 0}u =f,\\ 
\|\widetilde N (\nabla u)\|_{L^p(\mathbb{R}^n)}<\infty\,,
\end{cases}\tag{$R_p$}\label{Rp}\end{equation}
is solvable. 

Let us note that the condition $\|\widetilde N (\nabla u)\|_{L^p(\mathbb{R}^n)}<\infty$ imposed on the solution above automatically implies that there exists a non-tangential trace, or, more precisely, $\lim_{t\to 0}u =f \, {\rm n.t.}$ (that is, $\lim_{(y,t) \to (x,0)} u(y,t) = f(x),$ for $a.e. \, x \in \mathbb{R}^n$) and  $\nabla_\| u(\cdot, t)$ converges to $\nabla_\| f$ as $t\to 0$ weakly in $L^{p}(\rn)$ (see Lemma~\ref{l2.2} below). Thus, for $f\in C_0^\infty$ we can interpret  $\lim_{t\to 0}u =f$ as a trace operator acting on  $\widetilde W^{1,2}(\reu)$ or as a non-tangential trace and the two traces coincide (see also \cite{BM}, Remark~7.13,  for a detailed discussion).

We say that the Regularity boundary value problem is well-posed if for every $f\in \dot L^p_1(\RR^n)$ there exists a unique solution to \eqref{Rp} satisfying \eqref{eq1.5}, with $\lim_{t\to 0}u =f$ interpreted in the sense that $\lim_{t\to 0}u =f \, {\rm n.t.}$ and  $\nabla_\| u(\cdot, t)$ converges to $\nabla_\| f$ as $t\to 0$ weakly in $L^{p}(\rn)$, and if moreover, the corresponding solution in the special case of $f\in C_0^\infty(\rn)$ coincides with the weak solution defined above in $\widetilde W^{1,2}(\reu)$.

The homogeneous Sobolev space $\dot L^p_1(\RR^n)$ is the completion of $C_0^\infty$ with respect to the Sobolev norm $\|\nabla_\| f\|_{L^p(\RR^n)}$. While fairly evident here, it will be convenient to distinguish the gradient in $\ree$ and the gradient in $\rn$ throughout the paper, and we shall denote the latter by $\nabla_\|.$

We note, furthermore, that \eqref{Dp'} and \eqref{Rp} above are defined in $\reu$. An analogous definition applies to the lower half-space. Well-posedness in $\RR^{n+1}_\pm$ stands for the well posedness both in $\reu$ and $\RR^{n+1}_-$, and similarly for other properties. 

 Let us comment on  a somewhat peculiar definition of well-posedness in this paper, insisting that for $C_0^\infty(\rn)$ data the solution coincides with the weak solution defined above in $\widetilde W^{1,2}(\reu)$. The rationale for such a definition comes, in particular, from the example in \cite{KKPT}, \cite{KR}, \cite{Ax}, where it has been demonstrated that the solution of \eqref{Dp'} in principle does not have to coincide with the weak solution, even for nice data.

Indeed, the papers \cite{KKPT}, \cite{KR}, \cite{Ax} consider solvability of boundary-value problems for the (two-dimensional) coefficient matrix
\[A_k(x,t)=A_k(x)=\begin{pmatrix}1&k\mathop{\mathrm{sgn}}(x)\\ -k\mathop{\mathrm{sgn}}(x)&1\end{pmatrix}\]
where $k$ is a real number.  It turns out that for certain values of $k$ and~$p$, the Dirichlet problem is solvable in the sense that for every $f\in L^{p'}(\rn)$ there exists a solution satisfying \eqref{Dp'} and converging to $f$ as $t\to 0$ in the strong $L^{p'}$ sense \cite{Ax} (moreover, according to  \cite{HMM}, such a solution is unique), but however, the weak solution (with nice datum which, in particular, belongs to $L^{p'}(\rn)$) does not necessarily satisfy  the non-tangential maximal function estimate in \eqref{Dp'} \cite{KKPT}. And indeed, the solution built in \cite{Ax} does not satisfy $\nabla u\in L^2(\RR^2_+)$, not even for smooth boundary data. Our definitions are aimed to avoid such a situation.

It has been proved in \cite{HKMP} that for any elliptic operator $L$ with real bounded measurable $t$-independent coefficients there exists $p'<\infty$ such that the Dirichlet problem \eqref{Dp'} is well-posed. The purpose of this paper is to establish a certain duality between the Dirichlet and the Regulatity problems, and in particular, to show that for any elliptic operator $L$ with real bounded measurable $t$-independent coefficients there exists a $p>1$ such that the Regularity problem \eqref{Rp} is well-posed.  Before stating the main result, let us introduce some relevant terminology. 

Here and throughout the paper,  the capital letters $X,Y,Z$  denote points in $\ree$ and the corresponding small ones stand for the points in $\rn$. Furthermore, 
$B=B_R(X)=B(X,R)$ is the ball in $\ree$ centered at $X\in \ree$ with the radius $R>0$, and $\Delta=\Delta_R(x)=\Delta(x,R)$ is the ball in $\rn$ centered at $x\in\rn$ with the radius $R>0$. Then the tent regions are $T(\Delta)=\widehat{\Delta}=\{(x,t)\in\reu: \text{dist}(x,\Delta^c)\geq t\}$, and the Whitney cubes are, as above, $W(x,t) =\Delta(x,t) \times (t/2, 3t/2)$, $(x,t)\in\reu$.

Throughout the paper $L$ will be an elliptic  divergence form elliptic operator
with  bounded, measurable, complex-valued, $t$-independent coefficients. We shall assume, in addition, that the solutions to $Lu=0$ in $\reu$ are locally H\"older continuous in the following sense.  
Assume that $Lu=0$ in $\reu$ in the weak sense and $B_{2R}(X)\subset \reu$, $X\in \reu$, $R>0$. Then 
\begin{equation}
|u(Y)-u(Z)|\leq
C \left(\frac{|Y-Z|}{R}\right)^{{\mu}}\left(\,\fiint\limits_{B_{2R}(X)}|u|^{2}\right)^{\frac{1}{2}},\quad\mbox{for all}\quad Y,Z \in B_R(X),
\label{eq1.6}\end{equation}
\noindent for some constants $\mu>0$ and $C>0$.
 In particular, one can show that for any $p>0$
\begin{equation}
|u(Y)|\leq
C \left(\,\fiint\limits_{B_{2R}(X)}|u|^{p}\right)^{\frac{1}{p}},\quad\mbox{for all}\quad Y,Z \in B_R(X).
\label{eq1.7}\end{equation}

We shall refer to property \eqref{eq1.6} by saying that the solutions (or, slightly abusing the terminology, the operator) satisfy the De Giorgi-Nash-Moser (or DG/N/M)
bounds.  Respectively, the constants $C$ and $\mu$ in \eqref{eq1.6}, \eqref{eq1.7} will be referred to as the De Giorgi-Nash-Moser constants of $L$. Finally, following \cite{AAAHK}, \cite{HMiMo}, \cite{HMM}, we shall normally refer to the following collection of quantities: the dimension, the ellipticity, and the De Giorgi-Nash-Moser constants of $L$, $L^*$ collectively as the {\it ``standard constants"}.

We note that the De Giorgi-Nash-Moser bounds are not necessarily satisfied for all elliptic PDEs with complex $t$-independent 
coefficients \cite{F, MNP, HMMc}. However, \eqref{eq1.6}, \eqref{eq1.7} always hold when the coefficients of the underlying equation are real \cite{DeG,Na,Mo}, 
 and the constants depend quantitatively only upon ellipticity and dimension
(for this result,  the matrix $A$ need not be $t$-independent).
Moreover, \eqref{eq1.6} (which implies \eqref{eq1.7}) is stable under small complex perturbations
of the coefficients in the $L^\infty$ norm (see, e.g., \cite{Gi}, Chapter VI, or  \cite{A1}).  
Thus, in particular, \eqref{eq1.6}-\eqref{eq1.7} hold automatically, e.g.,  for small complex
perturbations of real elliptic coefficients.  We also note that in the $t$-independent setting that we consider here, the interior the De Giorgi-Nash-Moser estimates hold always when the ambient dimension $n+1=3$
(see \cite[Section 11]{AAAHK}).

Let us now turn to the layer potentials. Let $L$  be an elliptic operator with bounded measurable coefficients. By  $E,\,E^*$  we denote the fundamental solutions associated with $L$ and $L^*$, respectively, in $\ree$, so that 
$$L_{x,t} \,E (x,t;y,s) = \delta_{(y,s)}(x,t) \;\;\;\text{and}\;\;\; L^*_{y,s}\, E^*(y,s;x,t) \equiv L^*_{y,s} \,{\overline{E (x,t;y,s)}} = \delta_{(x,t)}(y,s),$$
where $\delta_{(x,t)}$ denotes the Dirac delta function at the point $(x,t)$. 
One can refer, e.g., to \cite{HK} for their construction and properties. 
We note for future reference that when the coefficients of the underlying matrix are $t$-independent, 
\begin{equation}\label{eq1.9}
E(x,t;y,s)=E(x,t-s;y,0),
\end{equation}

\noindent and hence, in particular, one can swap the derivatives in $t$ and $s$ for the fundamental solution. 

The single layer potential and the double layer potential operators associated with $L$ are given, respectively, by 
\begin{multline}\label{eq1.10}
{S}^L_t f(x)  \equiv\int\limits_{\mathbb{R}^{n}}E(x,t;y,0)\,f(y)\,dy, \,\,\, t\in \mathbb{R},\,x\in\rn,\\
D^L_{t}f(x) 
\equiv\int\limits_{\mathbb{R}^{n}} {\overline{\partial_{\nu_{A^*},y}\, E^*
(y,0;x,t)}}\,f(y)\,dy,\,\,\, t \neq 0,\,x\in\rn.\end{multline}
Here, the conormal derivative is roughly $\partial_{\nu_A} u=-e_{n+1}A(y)\nabla u$, $e_{n+1}=(0,...,0,1)$. The precise meaning of the latter on the boundary will be discussed later, with the Preliminaries.

The main result of this paper is as follows. 

\begin{theorem}\label{t1.11} Let $L$ be a  divergence form elliptic operator
with  bounded, complex-valued, $t$-independent coefficients in $\repm$, $n\geq 2$. Assume, in addition, that the solutions to $Lu=0$ in $\RR^{n+1}_\pm$ satisfy the De Giorgi-Nash-Moser bounds, and that the same is true for the adjoint operator $L^*$. Then there exists $\eps>0$ depending on the standard constants only such that for any $1<p<2+\eps$, $\frac 1p+\frac {1}{p'}=1$, the  following are equivalent:
\begin{enumerate}[(a)]
\item The Dirichlet problem \eqref{Dp'} for $L^*$ is solvable in $\RR^{n+1}_\pm$, and, in addition to \eqref{eq1.4}, the weak solution satisfies the square function bounds
\begin{equation}\label{eq1.12}\|\A(t\nabla u)\|_{L^{p'}(\rn)}\leq C \|f\|_{L^{p'}(\rn)}, 
\end{equation}
where  $\A$ stands for the square function, that is,
\begin{equation}\label{eq1.13}
\A F(x):=\left(\iint_{|x-y|< |t|}|F(y,t)|^2 \frac{dy dt}{|t|^{n+1}}\right)^{1/2},
\end{equation}
\noindent for $F:\repm\to \RR$.
\item The Regularity problem \eqref{Rp} for $L$ is solvable in $\RR^{n+1}_\pm$. 
\item The Regularity problem \eqref{Rp} for $L$ is solvable in $\RR^{n+1}_\pm$, and the solution can be represented by means of (compatible) layer potentials, that is, 
\begin{equation}\label{eq1.14} u(x,t)=S_t^L \left(S_0^L\right)^{-1}f(x), \quad (x,t)\in \RR^{n+1}_\pm.\end{equation}
In particular, the operator $f\mapsto \N(\nabla S_{t}^Lf)$ is bounded in $L^p(\rn)$ and
$$ S_0^L:L^p(\rn)\to \dot L_1^p(\rn)$$
\noindent is compatibly invertible.
\item The Dirichlet problem \eqref{Dp'} for $L^*$ is solvable in $\RR^{n+1}_\pm$, and the solution can be represented by means of (compatible) layer potentials as
\begin{equation}\label{eq1.15} u(x,t)=S_t^{L^*} \left(S_0^{L^*}\right)^{-1}f(x), \quad (x,t)\in \RR^{n+1}_\pm.\end{equation}
In particular, $f\mapsto N(S_{t}^{L^*}f)$ is a bounded operator from $L_{-1}^{p'}(\rn)$ to $L^{p'}(\rn)$ and
$$ S_0^L: L_{-1}^{p'}(\rn)\to  L^{p'}(\rn)$$
\noindent is compatibly invertible. Here $L_{-1}^{p'}(\rn):= \left(\dot L_1^p(\rn)\right)^*$.
\item The Dirichlet problem \eqref{Dp'} for $L^*$ is well-posed in $\RR^{n+1}_\pm$, and, in addition to \eqref{eq1.4}, the solution satisfies the square function bounds \eqref{eq1.12}.
\item The Regularity problem \eqref{Rp} for $L$ is well-posed in $\RR^{n+1}_\pm$.
\end{enumerate}

\end{theorem}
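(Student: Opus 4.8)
The plan is to prove the chain of equivalences $(a)\Leftrightarrow(d)\Leftrightarrow(c)\Leftrightarrow(b)$, exploiting the fact that $(S^L_0)^{-1}$ and $(S^{L^*}_0)^{-1}$ intertwine the single layer potentials for $L$ and $L^*$, and that the Regularity problem for $L$ and the Dirichlet problem for $L^*$ are formally dual via the bilinear pairing $\langle \nabla_\| f, \,\cdot\,\rangle$ between $\dot L^p_1$ and $L^{p'}_{-1}$. The cleanest route is to first establish the implications that go through the layer potential representations, since once we know the representation \eqref{eq1.14}, the estimate $\|\widetilde N(\nabla S^L_t f)\|_{L^p}\lesssim \|\nabla_\| f\|_{L^p}$ together with invertibility of $S^L_0:L^p\to\dot L^p_1$ is essentially a restatement of $(c)$, and dually for $(d)$. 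So the heart of the matter is: (i) solvability of $(R_p)$ for $L$ is equivalent to invertibility of $S^L_0:L^p\to\dot L^p_1$ coupled with the non-tangential bound for $\nabla S^L_t$; (ii) solvability of $(D_{p'})$ for $L^*$ with square function bounds is equivalent to invertibility of $S^{L^*}_0:L^{p'}_{-1}\to L^{p'}$ coupled with the $N$ and square-function bounds for $S^{L^*}_t$; and (iii) these two invertibility statements are equivalent to each other by duality.

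\smallskip

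\noindent\textbf{Step 1: Layer potential estimates.} I would first record, as input from the $t$-independent functional-calculus machinery (à la \cite{AAAHK}, \cite{HMiMo}, \cite{HMM}, \cite{Rosen}), that under the De Giorgi–Nash–Moser hypotheses on $L$ and $L^*$ there is $\eps>0$ so that for $1<p<2+\eps$ one has the bounds $\|\widetilde N(\nabla S^L_t f)\|_{L^p}+\|\A(t\nabla\partial_t S^L_t f)\|_{L^p}\lesssim\|\nabla_\| f\|_{L^p}$ (and the $p'$-analogue with $L^*$), as well as the jump relations for $\partial_{\nu_A}S^L_t$ and $D^L_t$ on the boundary, and the key square-function/non-tangential estimates for the single layer potential. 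These are ``black box'' facts at the level of this paper and I will cite them. The crucial non-obvious object is $S^L_0:\dot L^p_1\to$ ... wait, it is $S^L_0$ acting $L^p\to\dot L^p_1$ and its adjoint acting $L^{p'}_{-1}\to L^{p'}$; the self-adjointness-type identity $(S^L_0)^* = S^{L^*}_0$ (as maps between the appropriate dual pairs) will be verified directly from the definition \eqref{eq1.10} and the relation $E^*(y,0;x,0)=\overline{E(x,0;y,0)}$.

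\smallskip

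\noindent\textbf{Step 2: From solvability to invertibility, and back.} For $(b)\Rightarrow(c)$: given a solution $u$ to $(R_p)$ with data $f$, I would use the Green formula representation $u = -D^L_t(u\big|_{t=0}) + S^L_t(\partial_{\nu_A}u)$ (whose validity in this generality is where the ``Green formula representation'' hypothesis earns its keep — this is flagged in the excerpt as Section~\ref{sWP}) to write $u$ purely in terms of a single layer potential by a standard argument pushing the double layer term into a single layer term using jump relations, ultimately obtaining $u = S^L_t g$ with $S^L_0 g = f$; surjectivity of $S^L_0$ follows, and injectivity follows from uniqueness. The reverse direction $(c)\Rightarrow(b)$ is immediate from the estimate in Step 1. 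For $(a)\Leftrightarrow(d)$ one runs the identical argument for $L^*$ in the Dirichlet setting, where the double layer potential $D^{L^*}_t$ solves the Dirichlet problem and the single layer representation is extracted the same way, now using the square function bound \eqref{eq1.12} as the defining extra estimate that matches the single-layer square-function bound from Step 1.

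\smallskip

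\noindent\textbf{Step 3: Duality $(c)\Leftrightarrow(d)$ and the $2+\eps$ window.} This is where the actual ``duality between Regularity and Dirichlet'' lives: $S^L_0:L^p\to\dot L^p_1$ is invertible if and only if its adjoint $(S^L_0)^*=S^{L^*}_0:L^{p'}_{-1}\to L^{p'}$ is invertible, by the basic Banach-space fact that a bounded operator is invertible iff its adjoint is. The non-trivial content is matching the \emph{a priori} bounds: the boundedness of $f\mapsto \widetilde N(\nabla S^L_t f)$ on $L^p$ and of $f\mapsto N(S^{L^*}_t f):L^{p'}_{-1}\to L^{p'}$ are again dual to each other (this uses the $t\leftrightarrow s$ swap \eqref{eq1.9} to turn $\nabla_{x,t}S^L_t$ on one side into $\nabla_{y,s}S^{L^*}_s$ on the other, plus a duality identity pairing the square function against the non-tangential maximal function). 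The restriction $p<2+\eps$ is forced because the single-layer estimates of Step 1 are known (without extra structural hypotheses) only in an interval around $p=2$ of length depending on the standard constants; I expect the endpoint bookkeeping — ensuring that the same $\eps$ works for $L$, $L^*$, both half-spaces, and both the $p$ and $p'$ sides — to be tedious but routine.

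\smallskip

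\noindent\textbf{Main obstacle.} The genuine difficulty, and the reason the hypothesis ``with the Green formula representation'' is imposed, is \emph{uniqueness}: for general complex $t$-independent coefficients one cannot run a maximum principle or an energy argument to conclude that two solutions with the same data coincide, and the boundary Hölder continuity \eqref{eq1.8} is not available. The resolution is to define the solution class to consist precisely of those $u$ admitting the representation $u=-D^L_t(f)+S^L_t(\partial_{\nu_A}u)$, and then derive uniqueness from injectivity of $S^L_0$ on $L^p$ (equivalently of $S^{L^*}_0$ on $L^{p'}_{-1}$), which is itself part of what we prove in Steps 2–3. So uniqueness is bootstrapped from the invertibility statements rather than established independently; the careful statement and proof of the Green-formula representation (that it even holds for the solutions we construct, and that it is preserved under the various limiting procedures) is the technically delicate piece, and I would defer its full treatment to the Preliminaries and to Section~\ref{sWP} as the excerpt indicates.
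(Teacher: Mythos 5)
Your proposal correctly identifies the duality structure of the theorem ($S^L_0$ vs.\ $(S^L_0)^* = S^{L^*}_0$, $\dot L^p_1$ vs.\ $L^{p'}_{-1}$), and Steps~2 and 3 are close in spirit to the paper's Sections 5--7. But there is a genuine gap: you never address where the lower bound for the single layer potential comes from, and this is precisely where the main new estimate of the paper lives.

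Concretely, in your Step~2, both the implication $(b)\Rightarrow(c)$ and the implication $(a)\Rightarrow(d)$ require \emph{injectivity} of $(S^L_0)^\pm:L^p(\rn)\to\dot L^p_1(\rn)$ (equivalently, of its adjoint on $L^{p'}_{-1}$) -- ``extracting a single layer representation'' from the Green formula is not possible without it, and surjectivity alone is not enough to write $u=S^L_t(S^L_0)^{-1}f$. The jump relation \eqref{eq6.2} reduces injectivity to a bound of the form $\|\partial_{\nu_A}(S^L)^\pm g\|_{L^p}\lesssim\|\nabla_\| S^L_0 g\|_{L^p}$, which is a one-sided Rellich inequality: the normal derivative of a solution is controlled by its tangential derivatives. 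For complex, non-symmetric, $t$-independent coefficients this is \emph{not} a known black-box fact -- it is the paper's Theorem~\ref{t3.1}, proved in Sections~3--4 via the ``conjugate function'' estimate \eqref{eq3.9}, which in turn rests on the $L^p$ Kato estimate \eqref{eq4.2}, tent space duality, and off-diagonal estimates for $e^{-t^2 L_\|}$. You cannot derive the Rellich inequality from $(R_p)$-solvability alone (that would be circular, since solvability only gives an estimate for \emph{some} solution with the given data, not for the specific function $S^L_t g$); the paper avoids this circularity by deriving the Rellich bound directly from hypothesis $(a)$ -- Dirichlet solvability for $L^*$ with square-function bounds -- applied to energy solutions in the unique $\widetilde W^{1,2}$ sense of Lemma~\ref{l2.4}. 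So the logical engine of the whole theorem is the implication $(a)\Rightarrow$ Rellich $\Rightarrow$ invertibility of $S^L_0$, and your proposal simply does not have an argument for this step.

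A secondary, smaller issue: your claim that the $\widetilde N(\nabla S^L_t)$ bound on $L^p$ and the $N(S^{L^*}_t):L^{p'}_{-1}\to L^{p'}$ bound are ``dual to each other'' is not quite right as stated; $N$ and $\widetilde N$ are not literal tent-space duals, and the paper instead derives both bounds from the $L^2$ square function estimate \eqref{eq5.2} via the extrapolation machinery of \cite{HMiMo}, \cite{HMM}, then uses them as independent inputs. The duality is genuinely used only at the level of the operator $S_0$ between Banach spaces (invertible iff adjoint invertible), which you do state correctly in Step~3.
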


\begin{remark}\label{r6.5.1} Let us comment on what is meant by compatibility (or rather compatible invertibility) of layer potentials. 

It is known that $(S^L)^\pm: \dot L^2_{-1/2}(\rn)\to \dot L^2_{1/2}(\rn)$  is  an invertible operator, essentially by the Lax-Milgram lemma and suitable trace/extension theorems (see, e.g., \cite{AMM}, Section 13). It is, however, possible, that the two inverses of $(S^L)^\pm$, one in $\dot L_1^p(\rn)\to L^p(\rn)$ (or, respectively, $L^{p'}(\rn)\to L_{-1}^{p'}(\rn)$) and another one $\dot L^2_{1/2}(\rn)\to \dot L^2_{-1/2}(\rn)$ are not compatible, that is, when acting on $f\in \dot L^2_{1/2}(\rn)\cap \dot L^p_1(\rn)$ (or, respectively, on $f\in \dot L^2_{1/2}(\rn)\cap L^{p'}(\rn)$)  they produce different functions. This is due to the aforementioned counterexample in \cite{Ax} (see also the corresponding discussion in \cite{BM}). 

It is important that this is not the case here. That is, the inverse of $(S^L)^\pm: L^p(\rn)\to \dot L^p_1(\rn)$ in statement $(c)$ (and, respectively, the inverse of $ S_0^L: L_{-1}^{p'}(\rn)\to  L^{p'}(\rn)$ is statement $(d)$) is compatible with the inverse of $(S^L)^\pm: \dot L^2_{-1/2}(\rn)\to \dot L^2_{1/2}(\rn)$. We shall refer to this property as {\it compatible} invertibility hereafter, the corresponding inverses will be called compatible, and we shall say that the representations \eqref{eq1.14}, \eqref{eq1.15} feature compatible layer potentials (while referring to the involved inverses).

\end{remark}


In combination with the results in \cite{HKMP}, \cite{AAAHK}, \cite{HMiMo},
and \cite{R}  (as regards the latter, see also \cite{GH} for an alternative proof), 
Theorem~\ref{t1.11} yields the following Corollaries. Their proofs can be found in Section~\ref{sPrCor}.

\begin{corollary}\label{c1.16} Let $L$ be a divergence form elliptic operator with bounded measurable $t$-independent coefficients, and assume that there exists an operator $L_0$,  falling under the scope of Theorem~\ref{t1.11}, such that for some $p>1$ one of the properties  {\it $(a)-(f)$} is satisfied for $L_0$. Assume, furthermore,  that 
$$\|A-A_0\|_{L^\infty(\rn)}<\eta$$ for a sufficiently small $\eta>0$  depending on the standard constants and the involved solvability constants of $L_0$, $L_0^*$.  Then  all six assertions {\it $(a)-(f)$} of Theorem~\ref{t1.11} are valid for the operator $L$ as well. \end{corollary}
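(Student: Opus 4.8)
The plan is to verify that $L$ itself falls under the hypotheses of Theorem~\ref{t1.11} and that property $(c)$ holds for $L$ at the given exponent $p$; all four assertions for $L$ then follow by applying Theorem~\ref{t1.11} to $L$. For the first point: since $\|A-A_0\|_{L^\infty(\rn)}<\eta$ (hence also $\|A^*-A_0^*\|_{L^\infty(\rn)}<\eta$) and the interior H\"older estimate \eqref{eq1.6} is stable under small $L^\infty$ perturbations of the coefficients (cf.\ \cite{Gi}, Chapter~VI, or \cite{A1}), the solutions of $Lu=0$ and $L^*u=0$ in $\RR^{n+1}_\pm$ satisfy \eqref{eq1.6} with De~Giorgi--Nash--Moser constants controlled by those of $L_0,L_0^*$; the dimension and ellipticity parameter of $L$ are likewise controlled by those of $L_0$ for $\eta$ small. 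Thus $L$ is in the scope of Theorem~\ref{t1.11}, and since all standard constants of $L$ lie in a fixed range determined by those of $L_0$, the parameter $\eps$ and the admissible exponent range of Theorem~\ref{t1.11} may be taken uniform over the $\eta$-neighborhood of $L_0$ --- in particular our $p$ remains admissible for $L$. Moreover, one of $(a)$--$(d)$ holds for $L_0$ at $p$ and $L_0$ is in the scope of Theorem~\ref{t1.11}, so \emph{all} of $(a)$--$(d)$ hold for $L_0$; from $(c)$ I record that $g\mapsto\N(\nabla S_t^{L_0}g)$ is bounded on $L^p(\rn)$ and that $S_0^{L_0}\colon L^p(\rn)\to\dot L^p_1(\rn)$ is invertible.

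The heart of the argument is a perturbation estimate for the single layer potential: for $\eta$ small,
\begin{equation}\label{corperturb}
\|\N(\nabla(S_t^L-S_t^{L_0})g)\|_{L^p(\rn)}+\|\nabla_\|(S_0^L-S_0^{L_0})g\|_{L^p(\rn)}\ \le\ C\eta\,\|g\|_{L^p(\rn)}.
\end{equation}
I would obtain this from the standard resolvent expansion of $S^L$ about $S^{L_0}$ in powers of $A-A_0$: from $L(E^L-E^{L_0})=\dv\!\big((A-A_0)\nabla E^{L_0}\big)$ one gets, after an integration by parts, an identity expressing $E^L-E^{L_0}$ as a double integral of $\nabla E^L\cdot(A-A_0)\nabla E^{L_0}$, and iterating realizes $S^L$ as a Neumann series in $A-A_0$ whose terms are built from the layer and volume potentials of $L_0$ interlaced with multiplications by $A-A_0$. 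The $L^p$-boundedness of $L_0$'s layer and volume potentials --- furnished by property $(c)$ for $L_0$ and the square-function bounds that Theorem~\ref{t1.11} makes available, together with the universal $L^2$-estimates valid for the layer potentials of any $t$-independent operator with the De~Giorgi--Nash--Moser property --- makes each term bounded on the relevant spaces, while the factor $\|A-A_0\|_{L^\infty}<\eta$ acquired at each step produces geometric decay of ratio $\lesssim\eta$; summing yields \eqref{corperturb}. Equivalently, one may invoke the analytic (hence locally Lipschitz) dependence of the Auscher--Axelsson layer potential operators on the coefficient matrix, in the pertinent $L^p$ operator norms, as in \cite{AAAHK}, \cite{HMiMo}, \cite{HMM}. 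This is the step I expect to be the main obstacle, precisely because $p$ need not be close to $2$: the expansion has to be run in the $L^p$-norms in which $L_0$'s layer potentials are already known to be bounded, and all constants must be tracked so that they stay uniform over the $\eta$-neighborhood of $L_0$.

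Granting \eqref{corperturb}, the rest is routine. By the triangle inequality and the first paragraph, $g\mapsto\N(\nabla S_t^L g)$ is bounded on $L^p(\rn)$. Writing $S_0^L=S_0^{L_0}\bigl(I+(S_0^{L_0})^{-1}(S_0^L-S_0^{L_0})\bigr)$ and using \eqref{corperturb} with the boundedness of $(S_0^{L_0})^{-1}\colon\dot L^p_1\to L^p$, the operator $(S_0^{L_0})^{-1}(S_0^L-S_0^{L_0})$ has $L^p\to L^p$ norm at most $C\eta<1$ for $\eta$ small, so $S_0^L\colon L^p(\rn)\to\dot L^p_1(\rn)$ is invertible by a Neumann series. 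I then set $u:=S_t^L(S_0^L)^{-1}f$ for $f\in\dot L^p_1(\rn)$ (in particular for $f\in C_0^\infty(\rn)$) and verify: $Lu=0$ in $\RR^{n+1}_\pm$ by the defining property of the layer potential; $u\to S_0^L(S_0^L)^{-1}f=f$ non-tangentially as $t\to0$, since $S_t^Lg\to S_0^Lg$ n.t.\ (the single layer potential is continuous across the boundary --- it is the conormal derivative, not $S$ itself, that jumps); and $\|\N(\nabla u)\|_{L^p(\rn)}\le C\|(S_0^L)^{-1}f\|_{L^p(\rn)}\le C\| \nabla_\| f\|_{L^p(\rn)}$. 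Hence \eqref{Rp} is solvable for $L$ with the layer potential representation \eqref{eq1.14}, i.e.\ property $(c)$ holds for $L$. Applying Theorem~\ref{t1.11} to $L$ --- legitimate by the first paragraph --- then delivers $(a)$, $(b)$ and $(d)$ for $L$ as well, together with uniqueness in the class of solutions with the Green formula representation and the corresponding well-posedness statements. Everything apart from \eqref{corperturb} is bookkeeping plus the appeal to Theorem~\ref{t1.11}.
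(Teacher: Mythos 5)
Your proposal is correct in outline and, through its ``equivalently, invoke the analytic dependence'' clause, lands on the same mechanism as the paper; but the two routes you offer are not equally viable, and you prove more than you need. The paper reduces everything to a \emph{boundary} perturbation estimate only, namely \eqref{eqAnalPert}: the boundedness of $g\mapsto\N(\nabla S_t^{L}g)$ on $L^p(\rn)$, $1<p<2+\eps$, is a universal fact for any $t$-independent operator with the De Giorgi--Nash--Moser property (estimate \eqref{eq5.5}, from \cite{HMiMo}), so no perturbative argument is required for that term, and likewise $S_0^L:L^p\to\dot L^p_1$ is bounded by Lemma~\ref{l2.2}. Your stronger estimate containing $\|\N(\nabla(S_t^L-S_t^{L_0})g)\|_{L^p}\lesssim\eta\|g\|_{L^p}$ is therefore superfluous, and it is also the weak point of your primary (Neumann-series-for-the-fundamental-solution) route: bounding each term of that expansion in the non-tangential maximal function norm at a general exponent $p$ far from $2$ requires mapping properties of the volume potential $\nabla L_0^{-1}\dv$ between tent spaces (cf.\ Proposition~\ref{p5.10}) interlaced with the layer potential bounds, and you do not supply these; as you yourself note, this is ``the main obstacle.'' The paper sidesteps it entirely with a soft argument (Section~\ref{sAppendix}): analyticity of $z\mapsto \nabla_\|S_t^{L_z}$ in the weak sense follows from the $L^2$/energy-space theory, the bounds $\sup_{|z|<\eps_0}\sup_t\|\nabla_\|S_t^{L_z}\|_{L^p\to L^p}\leq C_p$ are uniform over the family by \cite{HMiMo}, and Kato's theorem \cite[p.~365]{Ka} upgrades this to analyticity in the $L^p$ operator norm, giving the Lipschitz bound \eqref{eqAnalPert} and hence invertibility of $S_0^L$ by Neumann series exactly as you conclude. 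So: adopt your fallback as the main argument, drop the non-tangential part of your perturbation estimate, and the proof matches the paper's.
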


\begin{corollary}\label{c1.19} Let $L$ be either a divergence form elliptic operator with real (possibly non-symmetric) bounded $t$-independent coefficients, or a perturbation of such an operator, in the sense of Corollary~\ref{c1.16}. Then there exists $p>1$, depending on the dimension and the ellipticity constant of $L$ only, such that statements {\it $(a)-(f)$} of Theorem~\ref{t1.11} are valid for $L$. 

In particular, the Regularity problem is well-posed, for some range of $p>1$, 
for any elliptic operator with real $t$-independent coefficients and for its perturbations.
 \end{corollary}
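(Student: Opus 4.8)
The plan is to derive Corollary~\ref{c1.19} from Theorem~\ref{t1.11}, Corollaries~\ref{c1.16}--\ref{c1.17}, and the cited literature: Theorem~\ref{t1.11} reduces matters to exhibiting, for one admissible exponent, any one of the equivalent conditions $(a)$--$(d)$. First I would check the hypotheses of Theorem~\ref{t1.11}. When $A$ is real and $t$-independent, so is its transpose, hence the interior De Giorgi--Nash--Moser bounds \eqref{eq1.6}--\eqref{eq1.7} hold for both $L$ and $L^*$ by the classical theory \cite{DeG,Na,Mo}, with $\mu$ and $C$ depending only on $n$ and $\lambda$. For a perturbation $L$ of such an $L_0$ in the sense of Corollary~\ref{c1.16} these bounds persist under small complex $L^\infty$ perturbations, as recalled in the introduction; in the setting of Corollary~\ref{c1.17} the base operator is real, so DNM again holds for $L_0$, and $L$ is taken within the scope of that corollary. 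In all cases the standard constants of $L$, $L^*$ are controlled by those of $L_0$, $L_0^*$, so Theorem~\ref{t1.11} and Corollaries~\ref{c1.16}--\ref{c1.17} are applicable.

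Next I would verify condition $(a)$ for the real base operator. By \cite{HKMP} applied to $L^*$ (again real and $t$-independent), the Dirichlet problem $(D_{p'})$ for $L^*$ is well-posed for every $p'$ in a half-line $(q_0,\infty)$ with $q_0 = q_0(n,\lambda)$; equivalently, for $1 < p < q_0'$. The accompanying square function bound \eqref{eq1.12} follows by combining the non-tangential maximal estimate \eqref{eq1.4} with the $L^2$ square function and layer potential theory of \cite{AAAHK} and the $\A$--$N_*$ comparison for solutions of $t$-independent equations satisfying DNM developed in \cite{HMiMo}, \cite{HMM} (and \cite{R}), together with an $L^{p'}$ extrapolation. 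Fixing $p$ with $1 < p < \min(q_0',\, 2+\eps)$ --- a nonempty interval, since $q_0' > 1$ and $\eps > 0$ --- gives statement $(a)$ of Theorem~\ref{t1.11} for $L^*$ at this exponent, which depends only on $n$ and $\lambda$.

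Then Theorem~\ref{t1.11} immediately upgrades $(a)$ to $(b)$, $(c)$, $(d)$ at this $p$, and yields well-posedness of the Dirichlet problem for $L^*$ and the Regularity problem for $L$ with the Green formula representation; this settles the corollary for real operators. If $L$ is instead a perturbation of a real $L_0$ in the sense of Corollary~\ref{c1.16} or \ref{c1.17}, then $L_0$ falls under the scope of Theorem~\ref{t1.11} and satisfies $(a)$ by the preceding steps, so as long as $\|A - A_0\|_\infty$, respectively the Carleson norm in \eqref{eq1.18}, is small relative to the standard and solvability constants of $L_0$, $L_0^*$, the relevant corollary produces $(a)$--$(d)$ for $L$. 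Finally, to obtain the unqualified uniqueness claimed in the corollary I would upgrade the Green-formula uniqueness of Theorem~\ref{t1.11}: for the Dirichlet problem, \cite{HKMP} already gives genuine well-posedness of $(D_{p'})$ for real $t$-independent $L^*$, and this transfers to perturbations; for the Regularity problem $(R_p)$ for $L$, uniqueness follows from the solvability of $(D_{p'})$ for $L^*$ by the standard duality --- pairing a zero-data $(R_p)$ solution against $(D_{p'})$ solutions for $L^*$ through Green's identity (legitimate via the representation formula, or the approximation scheme of Section~\ref{sWP}) forces the gradient, and hence the solution, to vanish.

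I expect the genuine difficulty --- beyond tracking the admissible range of $p$ --- to be the square function bound \eqref{eq1.12} for the real operator $L^*$ across the full $L^{p'}$ range: its proof uses $t$-independence in an essential way and requires marrying the $L^2$ square-function/Kato estimates of \cite{AAAHK} with the $N_*$-bounds of \cite{HKMP} and an $L^{p'}$ good-$\lambda$ extrapolation. The uniqueness upgrade for $(R_p)$ is the other delicate point, as it presupposes that the Green formula representation is available for a priori arbitrary solutions satisfying the natural non-tangential bounds.
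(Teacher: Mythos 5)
Your overall strategy matches the paper's: verify the De Giorgi--Nash--Moser hypotheses, establish condition $(a)$ for the real base operator, invoke Theorem~\ref{t1.11} to propagate to $(b)$--$(d)$, and invoke Corollaries~\ref{c1.16}--\ref{c1.17} for the perturbed cases. There are, however, two places where you take a detour that the paper avoids. First, you treat the square function estimate \eqref{eq1.12} as a separate ingredient to be manufactured from \cite{AAAHK}, \cite{HMiMo}, \cite{HMM}, \cite{R} plus an $L^{p'}$ extrapolation; but the main theorem of \cite{HKMP} (note the title: ``Square function/Non-tangential maximal estimates\dots'') already delivers condition $(a)$ in full, square function bound included, so the paper simply cites it directly. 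Your alternative route would likely work, but it is unnecessary machinery. Second, and more substantively, for the unqualified uniqueness claim you appeal to a duality argument and flag---correctly---that its legitimacy ``presupposes that the Green formula representation is available for a priori arbitrary solutions,'' which is exactly the obstruction Section~\ref{sWP} is built to circumvent. The paper instead opens the proof of Corollary~\ref{c1.19} by observing that for real coefficients the De Giorgi--Nash--Moser bounds hold not only in the interior but \emph{at the boundary} as well (estimate~\eqref{eq1.8}), and then, per the remark closing Section~\ref{sWP}, boundary H\"older continuity yields uniqueness in the usual strong sense without needing the Green formula as an a priori hypothesis on the competing solution. You never invoke boundary H\"older regularity, so as written your uniqueness step rests on an assumption you yourself identify as a delicate point; incorporating the boundary DNM observation would close that gap and align your argument with the paper's.
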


We note that this result is sharp, in the sense that one cannot specify the range of well-posedness of boundary value problems for real non-symmetric $t$-independent operators which would not depend on the ellipticity parameter of the operator. More precisely, for every $p>1$ there exists an elliptic operator $L$ with real non-symmetric $t$-independent coefficients such that the Regularity problem \eqref{Rp} is not well-posed. A similar statement holds for the Dirichlet problem: given any $p'<\infty$ there exists an elliptic operator $L$ with real non-symmetric $t$-independent coefficients such that the Dirichlet problem \eqref{Dp'} is not well-posed. The counterexample can be found in \cite{KKPT} and \cite{KR} in the context of the Dirichlet and Regularity problem, respectively. 

In the case of real coefficients, the solvability of the Regularity problem for some $L^p$, $p>1$, is equivalent to the solvability in Hardy spaces $H^1$, much as the solvability of the Dirichlet problem in $L^{p'}$ for some $p'<\infty$ is equivalent to the solvability of the Dirichlet problem in $BMO$. For the Dirichlet problem the equivalence was established in \cite{DKP} and for the Regularity problem in \cite{DK}. We refer the reader to \cite{DKP}, \cite{DK} for precise statements. Here we just point out that in the realm of real coefficients the results of Corollary~\ref{c1.19} automatically extend to $H^1$ and $BMO$ spaces for  Regularity and Dirichlet problems, respectively. In fact, the solvability of the Regularity problem for operators with real coefficients can then be further extended to $H^p$, $1-\eps<p\leq 1$. Indeed, one can establish layer potential representations of solutions in $H^1$ (by the same argument as in the proof of Theorem~\ref{t1.11}),  use boundedness of layer potentials in $H^p$ demonstrated in \cite{HMiMo}, and then a Sneiberg-type argument to extrapolate invertibility in $H^1$ to invertibility in $H^p$, $1-\eps<p\leq 1$. This, in turn, can be dualized to get the solvability results in Holder $\dot C^\alpha$ spaces for the Dirichlet problem, with $\alpha>0$ sufficiently close to zero.  Furthermore, having obtained layer potential representations for these solutions, we can use analytic perturbation theory
(cf.  Section \ref{sAppendix} below) to treat complex perturbations of any real coefficient matrix,
thereby extending Corollary \ref{c1.19} to the case of $H^p$ data, $1-\eps<p\leq 1$ (for the regularity problem),
and to $\dot{C}^\alpha, \, \alpha = n(1/p-1)$ and BMO data, for the Dirichlet problem.  We omit the details,  but refer to \cite{HMiMo} where this is done for real symmetric matrices and their perturbations.
Once the layer potential representation has been established, the arguments in \cite{HMiMo} carry over
{\it mutatis mutandi} to the non-symmetric setting.  

Finally, we remark that all the results, in particular, Theorem~\ref{t1.11} and Corollaries~\ref{c1.16}--\ref{c1.19}, automatically extend to Lipschitz domains as defined in \eqref{eq1.3}. This is a consequence of the fact that a ``flattening" change of variables $(x,t)\mapsto (x, t-\psi(x))$, which maps a Lipschitz domain $\{(x,t)\in\reu:\, t>\psi(x)\}$ into $\reu$, preserves the class of $t$-independent elliptic operators. 

Let us now discuss the history of the problem. The study of elliptic boundary problems  \eqref{Dp'}, \eqref{Rp} has started with the results for the Dirichlet problem for the Laplacian on Lipschitz domains \cite{D1}, \cite{D2}. The first breakthrough in the context of the elliptic operators with bounded measurable  coefficients  came in \cite{JK}, where the authors realized how to resolve \eqref{Dp'}, $p'=2$, resting on the so-called Rellich identity. The latter is essentially a result of an integration by parts argument which allows one to compare the tangential and normal derivatives of the solution on the boundary, in the sense that 
\begin{equation}\label{eq1.20}
\|\nabla_{\|}f\|_{L^2(\rn)}\approx \|\partial_{\nu_A} u\|_{L^2(\rn)}, \quad f=u\bigr|_{\rn}.
\end{equation}

The Rellich identity underpinned the development of the elliptic theory for real and symmetric operators, and over the years the problems \eqref{Dp'} and \eqref{Rp} were resolved for the sharp range of $p$ for operators with real symmetric $t$-independent coefficients in \cite{KP} and perturbation results in the spirit of Corollary~\ref{c1.17} were obtained in  \cite{D3},  \cite{FJK},  \cite{FKP}, \cite{KP2}, (see also \cite{AAAHK}, \cite{AA}, \cite{HMM} for later developments in connection with the perturbation questions).

We remark that some ``smoothness"  of the underlying matrix in $t$ is necessary for well-posedness \cite{CFK} and thus starting the investigation with the $t$-independent case is natural in this context.

The argument for the Rellich identity heavily used  the condition of the symmetry of the matrix, and thus, could not be extended neither to real non-symmetric, nor more generally, to the complex case. The only exception to this rule was the resolution of the Kato problem \cite{CMcM, HMc, HLMc, AHLMcT},  in which \eqref{eq1.20} was established in the absence of self-adjointness, 
in the special case that the matrix has block structure, that is, $A=\{A_{jk}\}_{j,k=1}^{n+1}$ with $A_{j,n+1}=A_{n+1,j}=0$, $j=1,...,n$.  The observation that the solution of the Kato problem amounts to  \eqref{eq1.20} is due to C.\,Kenig, see \cite{K}. Moreover, quite recently (simultaneously with the preparation of this manuscript) it was shown in  \cite{AMM},  by a refinement of the proof of the Kato conjecture, 
that the Kato estimate, or more precisely, the $\gtrsim$ side of \eqref{eq1.20}, could be extended to the  ``block triangular" case when only $A_{j,n+1}=0$ without necessarily $A_{n+1,j}=0$ and similarly, the $\lesssim$ side of \eqref{eq1.20} holds when $A_{n+1,j}=0$.

However, the aforementioned ideas could not be directly applied to the general case of a
non-symmetric matrix lacking any additional block structure\footnote{ although the technology of
the Kato problem continues to play  a crucial role in the present paper and in \cite{HKMP}.}. 
Moreover, it was demonstrated in \cite{KKPT} (see also \cite{KR}) that the well-posedness in $L^2$ may fail when matrix has no symmetry and thus, \eqref{eq1.20} is not to be expected. Nonetheless, using a completely different approach, in \cite{KKPT}, \cite{HKMP} the authors have established that for any operator $L$ with real non-symmetric coefficients there is a $p'<\infty$ such that \eqref{Dp'} is solvable. This raised the question of solvability of the Regularity problem. 

In the particular case of the operators with real non-symmetric coefficients in dimension two and their perturbations the Regularity problem was resolved in \cite{KR}, \cite {B}. The present paper resolves this problem in arbitrary dimension. It shows that the solvability of the Dirichlet problem is generally equivalent to that of the Regularity problem and thus, in the context of real non-symmetric matrices there is always a $p$ such that \eqref{Rp} is solvable. The core of our argument is a {\it new Rellich-type inequality}. We demonstrate that, in fact, for any operator $L$ with complex and $t$-independent coefficients  the solvability of the Dirichlet problem \eqref{Dp'}, together with the square function bounds, entails a one-sided Rellich inequality, 
\begin{equation}\label{eq1.21}
 \|\partial_{\nu_A} u\|_{L^p(\rn)}\lesssim \|\nabla_{\|}f\|_{L^p(\rn)}, \quad f=u\bigr|_{\rn}.
\end{equation}
This ultimately paves the way to \eqref{Rp}. Of course, having a reverse inequality as well would be extremely interesting, but at the moment seems quite challenging. 

Finally, we also  point out that the actual question of connections between the Dirichlet and Regularity problem has received considerable attention in the literature, and some partial results were established in \cite{V} (\eqref{Rp} $\Longleftrightarrow$ \eqref{Dp'}, Laplacian on a Lipschitz domain), 
\cite{KP} (\eqref{Rp} $\Longrightarrow$ \eqref{Dp'}, real coefficients), \cite{S}, \cite {KiS} (\eqref{Rp} $\Longleftrightarrow$ \eqref{Dp'}, real symmetric  constant coefficient systems),
 \cite{KR} (\eqref{Dp'}$ \Longrightarrow$ \eqref{Rp},  real coefficients, dimension two),
 where in the non-selfadjoint case, one should understand that these implications hold 
up to taking adjoints. Some related counterexamples were obtained in \cite{M}.
 We also observe that more recently, in the case $p=2$,
 the fact that $(D_2)$ (with square function estimates) $\Longleftrightarrow (R)_2$ (again, up to adjoints), was established explicitly
 in \cite{AA2} (when the domain is the ball, but the proof there carries over to the half-space {\it mutatis mutandi}),  and is at least implicit in the combination of results in \cite[Section 4]{AAMc}
 and \cite[Estimate (5.3)]{AAAHK}, and also in \cite[Section 9]{AA}.
Our main result, Theorem~\ref{t1.11}, generalizes all implications above, at least as far as the $t$-independent matrices are concerned, under the assumption of De Giorgi-Nash-Moser bounds for solutions. The proof of \eqref{eq1.21} builds on Verchota's duality argument \cite{V} , reducing matters to proving $L^{p'}$ estimates for
certain conjugates, and in turn, the estimates for the conjugates will be obtained by an
extension of an argument in \cite{AAAHK} which exploits the solution of the Kato problem.


\section{Preliminaries}\label{sPrelim}

Let $L$ be a divergence-form elliptic operator with $t$-independent  bounded measurable coefficients. Any solution to $Lu=0$ satisfies the  {\it interior Caccioppoli inequality:}

Assume that $Lu=0$ in $\reu$ in the weak sense and $B_{2R}(X)\subset \reu$, $X\in \reu$, $R>0$. Then 
\begin{equation}
 \fiint\limits_{B_R(X)} \left| \nabla u(Y) \right|^2 \,dY \leq \frac{C}{R^2} \fiint\limits_{B_{2R}(X)} \left| u(Y) \right|^2 \,dY,
\label{eq2.1}
\end{equation}
\noindent for some  $C>0$ depending on the dimension and the ellipticity parameter of $L$ only. An analogous statement holds in $\RR^{n+1}_-$.

Recall that we assume, in addition, that solutions of $L$ and $L^*$ satisfy the interior H\"older continuity conditions, that is, the De Giorgi-Nash-Moser estimates \eqref{eq1.6}, \eqref{eq1.7}. 

We proceed to the issues of the non-tangential convergence and uniqueness  of solutions.  Unless explicitly stated otherwise,  we assume throughout the rest of the paper that $L=-\dv(A\nabla)$ is an elliptic operators with complex bounded measurable $t$-independent coefficients and that the solutions to $Lu=0$ and $L^*u=0$ satisfy the De Giorgi-Nash-Moser bounds. Furthermore, throughout the paper we assume that $n\geq 2$, as
 much of this theory in the case $n = 1$ has already been treated in \cite {KR} and \cite{B}.

\begin{lemma}\label{l2.2} Suppose that $u\in W^{1,2}_{loc}(\mathbb{R}^{n+1}_+)$ is a weak solution of $Lu=0$, which satisfies
$\N(\nabla u) \in L^p(\rn)$ for some  $1<p <\infty$.  Then
\begin{itemize}
\item[(i)]  there exists $f\in \dot L^p_1(\rn)$ such that $u \to f$ n.t. a.e., with $$|u(y,t)-f(x)| \lesssim t \,\N(\nabla u)(x),\quad{\mbox{for every $(y,t) \in \Gamma(x)$, $x\in \rn$,}}$$  and $$\|f\|_{\dot L^p_1}\lesssim \|\N(\nabla u)\|_{L^p};$$
\item[(ii)] for the limiting function $f$ from {\rm{(i)}}, one has 
 $$\nabla_\| u(\cdot, t) \longrightarrow \nabla_{\|}f$$
\noindent  as $t \to 0$, in the weak sense in $L^{p}$;
\item[(iii)] there exists $g \in L^p(\rn)$ such that $g=\partial_{\nu_A}u$ in the variational sense, i.e., 
$$\iint\limits_{\reu} A(X) \nabla u(X) \overline{\nabla \Phi(X)}\,dX= \int\limits_{\rn} g(x) \,\overline{\varphi(x)}\,dx,$$ for $\Phi \in \C_0^\infty(\mathbb{R}^{n+1})$ and $\varphi:= \Phi\!\mid_{t=0}$;
\item[(iv)] for the limiting function $g$ from {\rm{(iii)}}, one has 
$$  -e_{n+1} A \nabla u(\cdot, t)  \longrightarrow g$$
\noindent  as $t \to 0$, in the weak sense in $L^{p}$. Here $e_{n+1}=(0,...,0,1)$.
Finally, 
\begin{equation}\label{eq2.3}
\|g\|_{L^p(\rn)}\leq C\, \|\N(\nabla u)\|_{L^p(\rn)}.
\end{equation}
\end{itemize}
\noindent An analogous statement holds in $\RR^{n+1}_-$.
\end{lemma}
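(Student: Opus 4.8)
The plan is to recover the trace $f$ pointwise from interior regularity, and statements (ii)--(iv) from uniform $L^p$ bounds on the horizontal slices of $\nabla u$. Two preliminary observations. Since $u$ solves $Lu=0$ it is locally H\"older continuous by \eqref{eq1.6}, so its pointwise values (and those of $\partial_t u$, which is again a solution) make sense. Moreover $\|\N(\nabla u)\|_{L^p(\rn)}<\infty$ already forces $\iint_{K\cap\reu}|\nabla u|^2<\infty$ for every compact $K\subset\overline{\reu}$: splitting $K\cap\reu$ into the dyadic slabs $\{2^{-k-1}<t<2^{-k}\}$ and using $\fiint_{W(x,2^{-k})}|\nabla u|^2\le\N(\nabla u)(x)^2$ gives $\iint_{\{|x|<R,\,2^{-k-1}<t<2^{-k}\}}|\nabla u|^2\lesssim 2^{-k}\int_{|x|<2R}\N(\nabla u)^2$, which is summable in $k$. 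This makes all the integrations by parts below legitimate.

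\textbf{Part (i).} Fix $x$ with $\N(\nabla u)(x)<\infty$. For points $Y,Z$ lying at height $\approx t$ and within horizontal distance $\lesssim t$ of $x$, apply \eqref{eq1.6} to $u-c$ (still a solution), optimize in the constant $c$, and use a Poincar\'e inequality together with \eqref{eq2.1} inside interior balls of radius $\sim t$; this yields $|u(Y)-u(Z)|\lesssim t\,\big(\fiint_{B}|\nabla u|^2\big)^{1/2}\lesssim t\,\N(\nabla u)(x)$, where $B$ is an interior ball seen by a slightly widened cone at $x$. Chaining these estimates down the vertical segment above $x$ shows $\{u(x,t)\}_{t>0}$ is Cauchy as $t\to0$; set $f(x):=\lim_{t\to0}u(x,t)$, so that $|u(x,t)-f(x)|\lesssim t\,\N(\nabla u)(x)$, and then $|u(y,t)-f(x)|\lesssim t\,\N(\nabla u)(x)$ for every $(y,t)\in\Gamma(x)$ by one more horizontal step. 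In particular $u\to f$ non-tangentially a.e., and $f\in L^1_{\mathrm{loc}}$. The widening of apertures only costs constants in the $L^p$ norms, by the standard change-of-aperture lemma for $\N$.

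\textbf{Slice bounds, (ii), and the Sobolev bound in (i).} I next claim $\|\nabla u(\cdot,s)\|_{L^p(\rn)}\lesssim\|\N(\nabla u)\|_{L^p(\rn)}$ for $s$ running along a sequence $s_k\downarrow0$. For $1<p\le2$ this follows directly from the definition of $\N$: by concavity of $\tau\mapsto\tau^{p/2}$ one has $\N(\nabla u)(x)^p\ge\fiint_{W(x,t)}|\nabla u|^p$, so integrating in $x$ and using Fubini gives $\fint_{t/2}^{3t/2}\|\nabla u(\cdot,s)\|_{L^p(\rn)}^p\,ds\lesssim\|\N(\nabla u)\|_{L^p(\rn)}^p$; take $t=2^{-k}$. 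For $p>2$ one runs the same argument after self-improving \eqref{eq2.1} to a reverse H\"older inequality for $|\nabla u|$, enlarging the Whitney boxes slightly. Given the slice bounds, reflexivity of $L^p$ furnishes a weak limit of $\nabla_\| u(\cdot,s_k)$; testing against $\varphi\in C_0^\infty$ and using $u(\cdot,s_k)\to f$ in $L^1_{\mathrm{loc}}$ (Part (i)) identifies it as $\nabla_\| f$, whence $f\in\dot L^p_1$ with $\|\nabla_\| f\|_{L^p}\le\liminf_k\|\nabla_\| u(\cdot,s_k)\|_{L^p}\lesssim\|\N(\nabla u)\|_{L^p}$. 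Since the limit is uniquely determined, $\nabla_\| u(\cdot,t)$ converges weakly to $\nabla_\| f$ in $L^p$ as $t\to0$, which is (ii).

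\textbf{Parts (iii) and (iv), and the main obstacle.} For a.e. $t_0>0$, using $\dv(\overline\Phi\,A\nabla u)=A\nabla u\cdot\overline{\nabla\Phi}$ (valid since $\dv(A\nabla u)=0$ in the weak sense) and the divergence theorem on $\{t>t_0\}$ gives the Green identity
$$\iint_{\{t>t_0\}}A\nabla u\cdot\overline{\nabla\Phi}\,dX=\int_{\rn}\overline{\Phi(x,t_0)}\,\big(-e_{n+1}A\nabla u(x,t_0)\big)\,dx,\qquad \Phi\in C_0^\infty(\ree).$$
As $t_0\to0$, the left side tends to $\iint_{\reu}A\nabla u\cdot\overline{\nabla\Phi}$ by dominated convergence (preliminary $L^2$ bound). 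Choosing the extension $\Phi(x,t)=\varphi(x)\theta(t)$ with $\theta\equiv1$ near $0$, the right side equals $\int_{\rn}\overline\varphi\,(-e_{n+1}A\nabla u(\cdot,t_0))$ for all small $t_0$; along the sequence $s_k$ of the previous step, $-e_{n+1}A\nabla u(\cdot,s_k)$ has a weak-$L^p$ limit $g$, and passing to the limit gives $\iint_{\reu}A\nabla u\cdot\overline{\nabla\Phi}=\int_{\rn}g\,\overline\varphi$, i.e. $g=\partial_{\nu_A}u$ in the variational sense of (iii), together with $\|g\|_{L^p}\le\liminf_k\|e_{n+1}A\nabla u(\cdot,s_k)\|_{L^p}\lesssim\|\N(\nabla u)\|_{L^p}$, which is \eqref{eq2.3}. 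For (iv), the Green identity shows that $\int_{\rn}\overline\varphi\,(-e_{n+1}A\nabla u(\cdot,t_0))$ has a genuine limit as $t_0\to0^+$, necessarily $\int_{\rn}g\overline\varphi$; combined with uniqueness of the weak limit this gives $-e_{n+1}A\nabla u(\cdot,t)\rightharpoonup g$ in $L^p$. The statement in $\RR^{n+1}_-$ is proved identically. The delicate steps are the oscillation estimate of Part (i) --- every interior ball available near $\{t=0\}$ has radius only comparable to $t$, so the chaining and the aperture bookkeeping must be done carefully --- and the slice bound for $p>2$, which forces one to invoke higher integrability of $\nabla u$; the rest is soft (reflexivity, lower semicontinuity of the norm, identification of weak limits) once these inputs are in place.
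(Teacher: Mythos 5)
The paper does not prove Lemma~\ref{l2.2}; it refers to \cite{HMiMo}, \cite{HMM}, \cite{AAAHK}, \cite{Homog} and notes that those proofs follow \cite{KP}. Your argument is in the same spirit as the Kenig--Pipher approach (pointwise convergence from interior regularity, Green's identity on $\{t>t_0\}$, weak limits of slices), and Part~(i) together with the Green identity step are essentially sound. However, there is a genuine gap in the slice bounds on which almost everything downstream of Part~(i) rests.

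The claimed estimate $\|\nabla u(\cdot,s)\|_{L^p(\rn)}\lesssim\|\N(\nabla u)\|_{L^p(\rn)}$ along a sequence $s_k\downarrow0$ is fine for $1<p\le2$ by the Jensen/Fubini argument. For $p>2$ you invoke a reverse H\"older inequality for $|\nabla u|$ ``after self-improving \eqref{eq2.1}''. But Gehring--Meyers self-improvement only yields a reverse H\"older estimate
\[
\left(\fiint_{B}|\nabla u|^{q_0}\right)^{1/q_0}\lesssim\left(\fiint_{2B}|\nabla u|^{2}\right)^{1/2}
\]
for \emph{some} $q_0=2+\epsilon_0>2$ determined by ellipticity and dimension; enlarging the Whitney box does not increase $q_0$. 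For $p>q_0$ you therefore cannot conclude $\N(\nabla u)(x)^p\gtrsim\fiint_{W'(x,t)}|\nabla u|^p$, and indeed $\nabla u$ is a priori only in $L^{q_0}_{\rm loc}$, so the raw slice $\nabla u(\cdot,t)$ need not lie in $L^p(\rn)$. Since the Lemma is stated for all $1<p<\infty$, this leaves (ii), (iv), the inequality \eqref{eq2.3}, and the $\dot L^p_1$ bound in (i) unproved for $p>2+\epsilon_0$.

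Two standard devices repair this without requiring pointwise integrability of $\nabla u$ beyond $L^{q_0}_{\rm loc}$. First, the estimate $\|\nabla_\|f\|_{L^p}\lesssim\|\N(\nabla u)\|_{L^p}$ in (i) can be obtained directly from the pointwise bound you prove in Part~(i): taking $y=x'$ in $|u(y,t)-f(x)|\lesssim t\,\N(\nabla u)(x)$ and combining cones at $x$ and $x'$ yields the Haj\l asz-type inequality $|f(x)-f(x')|\lesssim|x-x'|\big(\N(\nabla u)(x)+\N(\nabla u)(x')\big)$, which implies $f\in\dot L^p_1$ with the desired bound for every $1<p<\infty$, with no slice estimate. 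Second, for (ii) and (iv) one should pair against fixed test functions $\varphi\in C_0^\infty(\rn)$ rather than against general $L^{p'}$ functions: $\int\nabla_\|u(\cdot,t)\,\overline\varphi=-\int u(\cdot,t)\,\overline{\nabla_\|\varphi}\to-\int f\,\overline{\nabla_\|\varphi}=\int\nabla_\|f\,\overline\varphi$ by dominated convergence (using the bound from (i) and that $\varphi$ has compact support), and similarly the Green identity produces the limit $\int g\,\overline\varphi$ for the conormal slices. What one then has is convergence in the sense of distributions with a limit identified as an $L^p$ function; to upgrade to bona fide weak convergence in $L^p$ for all $p$ one needs a slice estimate which, for $p$ beyond the Meyers range, cannot come from pointwise reverse H\"older. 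You should either confine the slice argument to $p\le 2+\epsilon_0$, or replace the raw slice by a $t$-mollified slice (which is pointwise controlled by $\N(\nabla u)$ evaluated in a wider cone), for which the $L^p$ bound holds for all $p$ by your Part~(i) chaining, and then observe that the mollified and raw slices have the same distributional limit.
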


The Lemma can be found in \cite{HMiMo} (as stated above), \cite{HMM} (for somewhat more general operators), \cite{AAAHK} (for $p=2$), \cite{Homog} (for real coefficients) and the proof in all cases closely follows an analogous argument in \cite{KP}.

\vskip 0.08in \noindent {\it Remark.} 
We note that the convergence results above entail, in particular, the following. The solvability of \eqref{Rp} as defined in the introduction, that is,  for $C_0^\infty(\rn)$ data, implies existence of a solution to \eqref{Rp}  for any $f\in \dot L^p_1(\rn)$. We refer the reader, e.g., to \cite{Homog}, Theorems~4.6, 5.6, where similar results were established for real symmetric matrices, and the proofs apply to our case without changes.

\begin{lemma}\label{l2.4} For any $2<r< \frac{2(n+1)}{n-1}$ and $1<q<\frac {rn}{n+1}$, $n\geq 1$, and for any $f\in L_1^2(\RR^n)\cap L^{r}(\RR^n)\cap L^q(\RR^n)$ there exists a unique $u\in \widetilde W^{1,2}(\reu)$ such that ${\rm Tr}\,u=f$ and $Lu=0$, in the usual weak sense. Moreover, 
\begin{equation}\label{eq2.5}
\|u\|_{\widetilde W^{1,2}(\reu)}\lesssim \|f\|_{L_1^2(\RR^n)}+\|f\|_{ L^{r}(\RR^n)}+\|f\|_{ L^{q}(\RR^n)}.
\end{equation}
\end{lemma}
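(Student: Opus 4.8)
The plan is to construct the solution by a standard Lax--Milgram argument in the non-homogeneous space $\widetilde W^{1,2}(\reu)$, using the hypotheses on $f$ to produce a suitable extension with controlled norm, and then to establish uniqueness by an energy argument.

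First I would build an explicit extension $w$ of $f$ into $\reu$. The natural choice is the Poisson extension $w = P_t \ast f$, where $P_t$ is the classical (Laplacian) Poisson kernel. I would need: (i) $\|\nabla w\|_{L^2(\reu)} \lesssim \|\nabla_\| f\|_{L^2(\rn)} = \|f\|_{\dot L^2_1}$, which is the standard identity $\|\nabla P_t\ast f\|_{L^2(\reu)}^2 \approx \|f\|_{\dot L^2_1(\rn)}^2$ via Plancherel; and (ii) control of $\iint_{\reu} |w(X)|^2 \frac{dX}{1+|X|^2}$. For (ii) the point is that $f \in L^r \cap L^q$ for the indicated exponents lets one dominate $w$ pointwise: near $t$ small one uses $\|P_t\ast f\|_{L^\infty} \lesssim t^{-n/q}\|f\|_{L^q}$ or an $L^r$ bound, and the weight $\frac{dX}{1+|X|^2}$ makes the resulting integral converge precisely because the exponent ranges $2<r<\frac{2(n+1)}{n-1}$ and $1<q<\frac{rn}{n+1}$ are chosen to match the decay of $P_t\ast f$ in $t$ and in $|x|$. (This is why two auxiliary exponents appear: one controls the behavior for large $t$/large $|x|$, the other for the contribution where the Poisson average is large.) I would record that $w \in \widetilde W^{1,2}(\reu)$ with $\|w\|_{\widetilde W^{1,2}}\lesssim$ the right-hand side of \eqref{eq2.5}, and that ${\rm Tr}\, w = f$.

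Next, set $u = w + v$ where $v$ solves $Lv = -Lw$ with zero trace, i.e. $v \in \widetilde W^{1,2}_0(\reu)$ and
$$\iint_{\reu} A\nabla v\cdot\overline{\nabla\Phi}\,dX = -\iint_{\reu} A\nabla w\cdot\overline{\nabla\Phi}\,dX, \quad \Phi\in \widetilde W^{1,2}_0(\reu).$$
On the closed subspace $\widetilde W^{1,2}_0(\reu)$ the sesquilinear form $\mathfrak a(v,\Phi)=\iint A\nabla v\cdot\overline{\nabla\Phi}$ is bounded and coercive: boundedness is $\|A\|_\infty$, and coercivity $\R\,\mathfrak a(v,v)\geq \lambda\iint|\nabla v|^2$ combined with a Poincaré-type inequality $\iint_{\reu}|v|^2\frac{dX}{1+|X|^2}\lesssim \iint_{\reu}|\nabla v|^2$ valid for $v$ with zero trace (which is exactly where the specific weights in $\widetilde W^{1,2}$ and $\widetilde L^2$ are calibrated; cf. the reference to \cite{E}). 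The antilinear functional $\Phi\mapsto -\iint A\nabla w\cdot\overline{\nabla\Phi}$ is bounded by $\|A\|_\infty\|\nabla w\|_{L^2}\|\nabla\Phi\|_{L^2}$. Lax--Milgram then gives a unique $v\in\widetilde W^{1,2}_0(\reu)$ with $\|\nabla v\|_{L^2(\reu)}\lesssim \|\nabla w\|_{L^2(\reu)}\lesssim \|f\|_{\dot L^2_1}$, hence $\|v\|_{\widetilde W^{1,2}}\lesssim \|f\|_{\dot L^2_1}$. Then $u = w+v$ is a weak solution of $Lu=0$ with ${\rm Tr}\,u=f$ and the estimate \eqref{eq2.5} follows by the triangle inequality. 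For uniqueness: if $u_1,u_2$ are two such solutions, $u_1-u_2\in\widetilde W^{1,2}_0(\reu)$ solves $L(u_1-u_2)=0$ weakly; testing against $u_1-u_2$ itself (legitimate since it lies in the energy space with zero trace) and using coercivity forces $\nabla(u_1-u_2)=0$, and then the Poincaré inequality with zero trace forces $u_1-u_2=0$.

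The main obstacle I anticipate is the careful bookkeeping in Step 1 — verifying $w=P_t\ast f\in\widetilde W^{1,2}(\reu)$ with the stated norm bound, since this is precisely what pins down the admissible ranges of $r$ and $q$. One must split $\iint_{\reu}|w|^2\frac{dX}{1+|X|^2}$ into regions according to whether $|X|$ (equivalently $t$ or $|x|$) is large or small, and in each region interpolate the $L^\infty$-in-$x$ decay of $P_t\ast f$ (controlled by $\|f\|_{L^q}$, giving a $t^{-n/q}$ factor) against the tail decay of $P_t$ acting on $L^r$ data; the constraint $2<r<\frac{2(n+1)}{n-1}$ is what makes the near-boundary piece integrable against the weight, while $1<q<\frac{rn}{n+1}$ handles the far field. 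A secondary technical point is justifying the Poincaré inequality $\iint_{\reu}|F|^2\frac{dX}{1+|X|^2}\lesssim \iint_{\reu}|\nabla F|^2$ for $F\in\widetilde W^{1,2}_0(\reu)$, which underlies both coercivity and uniqueness; this can be done by a Hardy-inequality argument in the $t$-variable together with the vanishing trace, or simply cited from the construction of the trace operator in \cite{E}.
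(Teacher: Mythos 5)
Your proposal follows essentially the same approach as the paper: realize $u=w+v$ with $w$ the classical Poisson extension of $f$ and $v$ the Lax--Milgram solution in $\widetilde W^{1,2}_0(\reu)$ to $Lv=-Lw$, with coercivity coming from the Poincar\'e inequality. The paper is terse here and defers details to \cite{KR}; your plan correctly fills in the same gaps the paper leaves implicit (the role of the exponents $r,q$ in placing $w$ into $\widetilde W^{1,2}$, and the zero-trace energy argument for uniqueness), and your sign in the Lax--Milgram identity is in fact the consistent one (the displayed sign in the paper's proof is a harmless typo given the stated $Lv=-Lw$).
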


\bp The proof is a modification of an analogous arguments \cite{KR}. Here we only mention the main idea. The remaining details are quite easy to fill in,  and if needed, the reader may consult \cite{KR}. 

The basic idea, already mentioned above, is to realize $u$ as $v+w$, where $w$ is the solution to the Laplace's equation with data $f$, that is, the Poisson extension of $f$, and $v$ is the solution to $Lv=G$ (where $G=-Lw$) with zero boundary data, given by the Lax-Milgram Lemma. A direct computation shows that $w\in \widetilde W^{1,2}(\reu)$ satisfies \eqref{eq2.5}. The Lax-Milgram lemma assures existence of the unique solution $v\in \widetilde  W^{1,2}_0(\reu)$ to the problem 
$$ \iint_{\reu} A\nabla v\,{\overline{\nabla \Phi}}\,dxdt= \iint_{\reu} A\nabla w\,{\overline{\nabla \Phi}}\,dxdt, \quad {\mbox{for all}}\quad \Phi\in \widetilde W^{1,2}_0(\reu).  $$
\noindent This requires boundedness and coercivity of the bilinear form with respect to the norm in $\widetilde W^{1,2}(\reu)$. Boundedness is obvious from the definition, and the coercivity follows from the Poincar\'e inequality. We note for the future reference that the Lax-Milgram lemma, in particular, assures that 
\begin{equation}\label{eq2.6}
\|\nabla v\|_{L^2(\reu)}\leq C \|\nabla w\|_{L^2(\reu)}.
\end{equation}

Finally, the desired estimates  \eqref{eq2.5} follow from the combination of aforementioned bounds on $w$ and \eqref{eq2.6}. \ep


Let us now provide somewhat more precise asymptotic estimates on the weak solution $u$ constructed above in the case when $f$ is a nice compactly supported function.

\begin{lemma}\label{l2.7} For any $f\in C_0^\infty(\rn)$ the weak solution $u\in \widetilde W^{1,2}(\reu)$ to the problem $Lu=0$, $u\Bigr|_{\rn}=f$, warranted by Lemma~\ref{l2.4}, satisfies  the following estimates. Let $\Delta_R$ denote the surface ball centered at $O$ such that $\supp f \subset \Delta_R$. Then for all $(x,t)\in\reu$ such that $|(x,t)|>> R$ we have:
\begin{equation}\label{eq2.8}
|u(x,t)|\leq C_f\, t^{-\frac{n+1}2-\eps}, \quad (x,t)\in\reu,
\end{equation}
\noindent uniformly in $x$, provided that $t$ is significantly bigger than the size of the support of $f$. The constant $C_f$ depends on $f$ and the operator $L$, $\eps>0$ depends on the ellipticity parameters of $L$ only.
\end{lemma}

The statement and the proof of this Lemma is the only place in the paper where constants denoted by $C$ are allowed to depend on data $f$. The results, however, will be only used qualitatively to ensure the convergence of some later arising integrals. 

\bp Recall the construction of the solution from the proof of Lemma~\ref{l2.4}. The classical harmonic Poisson extension of $f$, denoted by $w$, clearly satisfies \eqref{eq2.8}. In fact, it decays faster, as $w(X)={\underline{O}}\,(|X|^{-n})$, at infinity.  It remains to estimate $v$. To this end, we decompose $v$ as a sum $\sum_{i=0}^\infty v_i$, where $v_i\in \widetilde  W^{1,2}_0(\reu)$ is the Lax-Milgram solution to  
$$ \iint_{\reu} A\nabla v_i\,{\overline{\nabla \Phi}}\,dxdt= \iint_{\reu} A\nabla w_i\,{\overline{\nabla \Phi}}\,dxdt, \quad {\mbox{for all}}\quad \Phi\in \widetilde W^{1,2}_0(\reu).  $$
Here $w_i=\eta_i w$, with $\eta_i$, $i=0,1,...$,  being the elements of the usual partition of identity associated to dyadic annuli of radius $2^i$ centered at the origin (and $\eta_0$ associated to the unit ball). Then, in particular, for all $i$ sufficiently large depending on the size of the support of $f$ we have
\begin{equation}\label{eq2.9}
\|\nabla v_i\|_{L^p(\reu)}\leq C \|\nabla w_i\|_{L^p(\reu)}\leq C 2^{-i(n+1)/p}, \quad \textstyle{\frac{2(n+1)}{n+3}}-\eps<p\leq 2+\eps,
\end{equation}
\noindent for a suitable $\eps>0$ depending on the ellipticity constants of $L$ only. The first inequality can be seen, e.g., following the reflection procedure to reduce to the case of the entire $\RR^{n+1}$ and then using the Riesz transform bounds from \cite{A}. Note that the elliptic operator arising after reflection is not $t$-independent and does not necessarily satisfy De Giorgi-Nash-Moser bounds (due to boundary effects) but it is a complex coefficient elliptic operator falling under the scope of \cite{A}.

Next, observing that $v_i=0$ on the boundary we employ Poincar\'e inequality to write 
$$ \left(\fiint_{B_k(O)}|v_i|^p\,dX\right)^{1/p}\leq C \,2^k  \left(\fiint_{B_k(O)}|\nabla v_i|^p\,dX\right)^{1/p}\leq C \,2^{k\left(1-\frac{n+1}{p}\right)}\, 2^{-\frac{i(n+1)}{p}}.$$
Finally, by De Giorgi-Nash-Moser estimates, 
\begin{multline*}
|u(x,t)|\lesssim  \left(\fiint_{B_{t/2}(x,t)}|u|^p\,dX\right)^{1/p} \leq  \left(\fiint_{B_{t/2}(x,t)}|w|^p\,dX\right)^{1/p} +\sum_{i=1}^\infty  \left(\fiint_{B_{t/2}(x,t)}|v_i|^p\,dX\right)^{1/p}\\[4pt]
\lesssim t^{-n}+t^{1-\frac{n+1}{p}},
\end{multline*}
\noindent with the implicit constant depending on $f$ and assuming that $t$ is large enough compared to the size of the support of $f$. Given the range of $p$ in \eqref{eq2.9}, this finishes the proof of \eqref{eq2.8}. 
\ep

As discussed above, any  $u\in \widetilde W^{1,2}(\reu)$ has a trace in $\widetilde L^2(\RR^n)$. Moreover, for any $u$ which is a solution to $Lu=0$ in the sense of Lemma~\ref{l2.4}, one can define the conormal derivative $\partial_\nu u=\partial_{\nu_A} u$, in the sense of distributions,  via
\begin{equation}\label{eq2.10}\int\limits_{\rn} \partial_\nu u(x) \,\overline{\varphi(x)}\,dx:=\iint\limits_{\reu} A(X) \nabla u(X) \overline{\nabla \Phi(X)}\,dX,
\end{equation}
\noindent for any $\varphi \in C_0^\infty(\rn)$ and $\Phi \in \C_0^\infty(\mathbb{R}^{n+1})$ such that $\varphi:= \Phi\!\mid_{t=0}$. The details are as follows.

\begin{lemma}\label{l2.11}
Suppose that $u\in \dot{W}^{1,2}(\reu)$, and that $Lu=0$ in $\reu$.  Then $\partial_{\nu_A} u$
exists in $\dot{L}^{2}_{-1/2}(\rn)$;  i.e., there is a $g_u\in \dot{L}^{2}_{-1/2}(\rn)$ such that
for all $H\in \dot{W}^{1,2}(\reu)$,  with trace $tr(H) = h\in \dot{L}^{2}_{1/2}(\rn)$, we have
\begin{equation}\label{eq2.12}
\iint_{\reu}A\nabla u\cdot\overline{\nabla H} \,=\, \langle g_u,\overline{h}\rangle\,,
\end{equation}  
where $\langle\cdot,\cdot\rangle$ denotes here the duality pairing
of $\dot{L}^{2}_{-1/2}(\rn)$ and $ \dot{L}^{2}_{1/2}(\rn)$.  
The analogous statements hold for the adjoint $L^*$, and in the lower half-space.
\end{lemma}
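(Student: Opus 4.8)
The plan is to realize $\partial_{\nu_A}u$ as the Riesz representative of a bounded conjugate-linear functional on $\dot{L}^2_{1/2}(\rn)$. The one external ingredient I would invoke is the trace theory for the homogeneous space $\dot{W}^{1,2}(\reu)$: the trace operator $tr:\dot{W}^{1,2}(\reu)\to\dot{L}^2_{1/2}(\rn)$ is bounded and surjective, it admits a bounded linear right inverse --- e.g.\ the harmonic (Poisson) extension --- and its kernel is the closure of $C_0^\infty(\reu)$ in the $\dot{W}^{1,2}$ seminorm. As usual $\dot{W}^{1,2}(\reu)$ and $\dot{L}^2_{1/2}(\rn)$ are taken modulo additive constants, which is harmless since only gradients occur.

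For $h\in\dot{L}^2_{1/2}(\rn)$ set $\ell(h):=\iint_{\reu}A\,\nabla u\cdot\overline{\nabla H}$, where $H\in\dot{W}^{1,2}(\reu)$ is any extension with $tr(H)=h$; the integral converges absolutely by Cauchy--Schwarz, as $\nabla u,\nabla H\in L^2(\reu)$. The key point is that $\ell(h)$ is independent of the choice of $H$: if $H_1,H_2$ are two such extensions then $H_1-H_2$ lies in $\ker(tr)$, hence is approximated in the $\dot{W}^{1,2}$ seminorm by $\Psi_k\in C_0^\infty(\reu)$; since $Lu=0$ weakly, $\iint_{\reu}A\nabla u\cdot\overline{\nabla\Psi_k}=0$ by \eqref{eq1.2}, and letting $k\to\infty$ gives $\iint_{\reu}A\nabla u\cdot\overline{\nabla(H_1-H_2)}=0$. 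Thus $\ell$ is a well-defined conjugate-linear functional, and taking $H$ to be the Poisson extension of $h$, so that $\|\nabla H\|_{L^2(\reu)}\lesssim\|h\|_{\dot{L}^2_{1/2}(\rn)}$, we get from ellipticity
$$|\ell(h)|\leq\|A\|_{L^\infty}\,\|\nabla u\|_{L^2(\reu)}\,\|\nabla H\|_{L^2(\reu)}\;\lesssim\;\|\nabla u\|_{L^2(\reu)}\,\|h\|_{\dot{L}^2_{1/2}(\rn)}\,.$$

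Since $\dot{L}^2_{-1/2}(\rn)=\big(\dot{L}^2_{1/2}(\rn)\big)^*$, the Riesz representation theorem produces a unique $g_u\in\dot{L}^2_{-1/2}(\rn)$ with $\ell(h)=\langle g_u,\overline{h}\rangle$ for all $h$ and $\|g_u\|_{\dot{L}^2_{-1/2}(\rn)}\lesssim\|\nabla u\|_{L^2(\reu)}$; unwinding the definition of $\ell$, this is exactly \eqref{eq2.12} for an arbitrary $H\in\dot{W}^{1,2}(\reu)$ with trace $h$. The statements for $L^*$ (replace $A$ by $A^*$) and for $\RR^{n+1}_-$ (reflect across $\{t=0\}$) follow verbatim. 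I expect the main obstacle, such as it is, to be the trace theory in these homogeneous spaces --- the surjectivity of $tr$ with a bounded right inverse and the identification of $\ker(tr)$ with the $C_0^\infty(\reu)$-closure used in the well-definedness step; everything after that is a routine Cauchy--Schwarz/Riesz argument. (One could equally run the argument through the Lax--Milgram lemma as in the proof of Lemma~\ref{l2.4}, but since $u$ is already given as a solution the direct route above is shorter.)
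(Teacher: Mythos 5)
Your proof is correct and follows essentially the same route as the paper's: define the functional $\ell(h)=\iint_{\reu}A\nabla u\cdot\overline{\nabla H}$ on $\dot L^2_{1/2}(\rn)$ using an arbitrary $\dot W^{1,2}(\reu)$ extension $H$ of $h$, show well-definedness via $Lu=0$ weakly and density of $C_0^\infty$ in $\dot W^{1,2}_0$, bound it via the harmonic extension and Cauchy--Schwarz, and apply Riesz representation. The only cosmetic difference is that you spell out the $C_0^\infty$-approximation step that the paper leaves implicit.
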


\begin{proof}  We define a bounded linear functional $\Lambda_u$
on $\dot{L}^{2}_{1/2}(\rn)$ as follows.
For $h\in \dot{L}^{2}_{1/2}(\rn)$, set
$$\Lambda_u(h):= \iint_{\reu}A\nabla u\cdot\overline{\nabla H} \,,$$
where $H$ is any $\dot{W}^{1,2}(\reu)$ extension of $h$  (of course, such extensions exist by standard
extension/trace theory).    Note that $\Lambda_u$ is well-defined:  indeed, if $H_1$ and $H_2$
are two different $\dot{W}^{1,2}(\reu)$ extensions
of the same $h$, then $H_1-H_2\in \dot{W}_0^{1,2}(\reu)$, whence
it follows that 
$$\iint_{\reu}A\nabla u\cdot\overline{\nabla (H_1-H_2)} \,=\,0 \,,$$
since $Lu=0$ in the weak sense.  Moreover, it is obvious that $\Lambda_u$ is linear.
To see that the functional is bounded, we simply choose an extension $H$ (e.g., the
harmonic extension), for which
$$\|H\|_{\dot{W}^{1,2}(\reu)} \leq\, C_0\,\|h\|_{\dot{L}^{2}_{1/2}(\rn)}\,,$$
for some purely dimensional constant $C_0$.  We then have
$$|\Lambda_u(h)|\,\leq\, \|A\|_\infty\, \|u\|_{\dot{W}^{1,2}(\reu)}\,\|H\|_{\dot{W}^{1,2}(\reu)}
\,\leq \, C_0 \|A\|_\infty\, \|u\|_{\dot{W}^{1,2}(\reu)}\,\|h\|_{\dot{L}^{2}_{1/2}(\rn)}\,,$$
i.e., $\|\Lambda_u\|\lesssim \|u\|_{\dot{W}^{1,2}(\reu)}$.   The conclusion of Lemma \ref{l2.11}
now follows by the Riesz Representation Theorem.
\end{proof}

Lemma~\ref{l2.11} allows us to justify the definition of the conormal derivative by \eqref{eq2.10} for all functions $u\in \widetilde{W}^{1,2}(\reu)$, by first identifying such a function with the corresponding equivalence class in $\dot{W}^{1,2}(\reu)$, then applying Lemma~\ref{l2.11} and then deducing that for any particular representative of this equivalence class, in particular, for $u$ itself, we have \eqref{eq2.10}. Such a definition gives identical result for $u$ in the same equivalence class, but, naturally,  the conormal derivative would not distinguish functions that differ by a constant.

 Note that if, in addition, $\N(\nabla u)\in L^p(\RR^n)$, for some $1<p<\infty$, then $\partial_\nu u\in L^p(\rn)$ by Lemma~\ref{l2.2}.

\section{Boundary estimate: a version of the Rellich-type inequality} 

\begin{theorem}\label{t3.1}
Let $L$ be an elliptic operator with $t$-independent coefficients such that the solutions to $Lu=0$ and $L^*u=0$ in $\repm$ satisfy the De Giorgi-Nash-Moser estimates.  Let $u$ be a solution to the Dirichlet problem $Lu=0$ in $\reu$, $u\Bigl|_{\partial\reu}=f$, for some $f\in C_0^\infty(\RR^n)$, in the sense of Lemma~\ref{l2.4}. Suppose that for some $1<p'<\infty$ the Dirichlet problem \eqref{Dp'} for the operator $L^*$ is solvable with the square function bounds, that is,  for every $C_0^\infty$ boundary data the  corresponding weak solution satisfies both  \eqref{eq1.4} and  the square function estimate \eqref{eq1.12}. Then the variational derivative of $u$ defined by \eqref{eq2.10} can be identified with an $L^p$ function, and
\begin{equation}\label{eq3.2}
\|\partial_{\nu_A} u\|_{L^p(\RR^n)} \leq C\, \|\nabla_\| f\|_{L^p(\RR^n)}, 
\end{equation}
\noindent where $\frac{1}{p}+\frac{1}{p'}=1.$  The constant $C$ depends on the standard constants and  on the solvability constants of $L^*$ involved in \eqref{eq1.4} and \eqref{eq2.10}. 
\end{theorem}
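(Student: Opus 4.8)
The plan is to follow Verchota's duality strategy, as advertised in the introduction: estimating $\|\partial_{\nu_A}u\|_{L^p}$ amounts to testing against $L^{p'}$ functions, and the key is to produce, for each such test function, an auxiliary solution of the \emph{adjoint} equation whose behavior is controlled by the solvability of $(D_{p'})$ for $L^*$ with square function bounds. Concretely, I would start from the variational definition \eqref{eq2.10}: for $\varphi\in C_0^\infty(\rn)$ and any $\Phi\in C_0^\infty(\ree)$ with $\Phi|_{t=0}=\varphi$,
\[
\int_{\rn}\partial_{\nu_A}u\,\overline{\varphi}\,=\,\iint_{\reu}A\nabla u\cdot\overline{\nabla\Phi}\,.
\]
Since by density it suffices to bound this pairing by $C\|\nabla_\| f\|_{L^p}\|\varphi\|_{L^{p'}}$, I would choose $\Phi$ cleverly: let $\Phi$ be (a cutoff of) the solution $w$ to the Dirichlet problem $L^*w=0$ in $\reu$ with $w|_{t=0}=\varphi$, the one furnished by hypothesis, so that $\|N_*w\|_{L^{p'}}+\|\A(t\nabla w)\|_{L^{p'}}\lesssim\|\varphi\|_{L^{p'}}$. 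The point of choosing a solution of $L^*$ is that one can then integrate by parts and move derivatives around: $\iint A\nabla u\cdot\overline{\nabla w}$ should, after integrating by parts in the transversal direction and using $Lu=0$, $L^*w=0$, and $t$-independence, be rewritten so that both $u$ and $w$ appear differentiated only in ways we can estimate — ideally reducing to an expression that pairs $\nabla_\| f$ against a boundary trace of $w$ (or a tangential object built from $w$) plus a solid-integral error term controlled by square functions.

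The heart of the argument is the manipulation of the solid integral. I would introduce the antiderivative trick: write $\partial_t$ of something, or use that for $t$-independent coefficients the equation $Lu=0$ can be written as $\partial_t(A\nabla u)_{n+1}+\dv_\|\big((A\nabla u)_\|\big)=0$, which lets one trade a $\partial_t$ derivative for tangential divergences. The cleanest route is probably to run the computation that appears in \cite{AAAHK}: express the conormal derivative pairing as a sum of terms each of which is either (i) a boundary pairing of $\nabla_\| f$ with the trace of a conjugate function associated to $w$, or (ii) a solid integral of the form $\iint_{\reu}(\text{tangential derivative of }u)\cdot(t\,\nabla w)\,\frac{dX}{t}$-type, handled by Cauchy--Schwarz against the square functions $\A(t\nabla w)$ and a square function of $\nabla_\| u$ — the latter being controlled, via Caccioppoli \eqref{eq2.1} and Lemma~\ref{l2.2}, by $\|\widetilde N(\nabla u)\|$, or rather (since we are proving the Regularity estimate, not assuming it) by the square function of $\nabla u$ which for \emph{this particular} $u$ (built from $f\in C_0^\infty$ via Lemma~\ref{l2.4}, with the decay from Lemma~\ref{l2.7}) can be estimated directly in terms of $\|\nabla_\| f\|_{L^p}$ by a Kato-type/ square-function bound. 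This is exactly where "the technology of the Kato problem" enters: one needs $\|\A(t\nabla u)\|_{L^p}\lesssim\|\nabla_\| f\|_{L^p}$, or a suitable truncated/localized version, for the solution $u$.

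The main obstacle, I expect, is precisely getting a clean, fully rigorous version of the integration-by-parts identity in the low-regularity setting — there are no classical normal derivatives, the conormal derivative exists only variationally (Lemma~\ref{l2.11}), and $w$ is only known to be a solution with $N_*$ and square-function control, not pointwise gradient bounds up to the boundary. So one must work with truncations $w_\varepsilon(\cdot,t)=w(\cdot,t+\varepsilon)$ or smooth cutoffs $\eta_R$ localizing in space, carry out all manipulations on these regularized objects where integration by parts is legitimate, and then pass to the limit using the qualitative decay from Lemmas~\ref{l2.4} and \ref{l2.7} (for $u$) and the a priori $L^{p'}$ bounds (for $w$) to kill boundary-at-infinity terms and justify the limit $\varepsilon\to0$. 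A secondary technical point is the bookkeeping to ensure all error terms genuinely carry a factor of $t$ (so that $\A(t\nabla w)$, not $\A(\nabla w)$, appears) and that the "good" boundary term reproduces $\int\nabla_\| f\cdot(\text{trace})$ with a trace that is bounded in $L^{p'}$ by $\|\varphi\|_{L^{p'}}$ — this is where the \emph{conjugate function} construction and its boundary behavior must be pinned down. Once the identity and these estimates are in place, the bound \eqref{eq3.2} follows by taking the supremum over $\|\varphi\|_{L^{p'}}\le1$, and the fact that $\partial_{\nu_A}u$ is thereby shown to lie in $L^p$ (not merely in $\dot L^2_{-1/2}$) is a byproduct of the same duality.
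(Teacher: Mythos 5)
Your overall strategy is the right one and matches the paper's: use the variational definition of $\partial_{\nu_A}u$, dualize against $g\in L^{p'}$, choose the test function to be the $(D_{p'})$-solution $w$ for $L^*$, integrate by parts in $t$ exploiting $t$-independence, and introduce the conjugate/antiderivative $v(x,t):=\int_t^\infty w(x,s)\,ds$ (so $\partial_t v=w$) to reduce everything to a boundary pairing of $\nabla_\| f$ with $A^*_\|\nabla_\| v$ plus errors that vanish (Lemma~\ref{l2.7} supplies the needed decay). This is exactly the skeleton of the paper's proof, up through display \eqref{eq3.7}.

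The gap is in how you propose to close the estimate. You suggest handling the resulting solid integrals by Cauchy--Schwarz between $\A(t\nabla w)$ and a square function of $\nabla u$, and you assert that $\|\A(t\nabla u)\|_{L^p}\lesssim\|\nabla_\| f\|_{L^p}$ (or a truncated variant) for the particular weak solution $u$ follows from ``a Kato-type bound.'' It does not. The Kato estimate \eqref{eq4.2} concerns $\sqrt{L_\|}$ acting on functions on $\rn$; it says nothing directly about solutions in $\reu$. A square function bound for $\nabla u$ in terms of $\|\nabla_\| f\|_{L^p}$ is morally equivalent to the Regularity estimate you are trying to prove, and is not available at this stage; invoking it would be circular (and indeed the Dahlberg--Jerison--Kenig machinery runs the other direction, from $\widetilde N(\nabla u)$ to the square function, not from boundary data to the square function). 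This is precisely the obstacle the paper's argument is designed to sidestep.

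What the paper actually does, after arriving at the boundary pairing $\int_{\rn}\nabla_\| f\cdot\overline{A^*_\|\nabla_\| v(\cdot,0)}\,dx$ in \eqref{eq3.7}, is to apply the heat semigroup $\p_t:=e^{-t^2L_\|}$ to the boundary datum $f$ \emph{itself}, not to the solution $u$. Writing the boundary integral as $-\iint_{\reu}\partial_t\bigl(\nabla_\|\p_t f\cdot\overline{A^*_\|\nabla_\| v}\bigr)$ and expanding as in \eqref{eq4.13}, every term carries an operator of the form $\theta_t f$ with $\theta_t\in\{t\sqrt{L_\|}\p_t,\;t^2\nabla_\|\sqrt{L_\|}\p_t,\;t^3L_\|^{3/2}\p_t,\;\ldots\}$, for which the vertical square function bounds \eqref{eq4.3} plus the $L^p$ Kato estimate \eqref{eq4.2} give $\|\A(\theta_t\sqrt{L_\|}f)\|_{L^p}\lesssim\|\nabla_\| f\|_{L^p}$ outright. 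The one genuinely delicate piece is the term $III_{2,2}$, which requires a ``principal part'' decomposition, Lemma~\ref{l4.4} (Poincar\'e plus off-diagonal decay), the tent-space product estimate $T_2^\infty\cdot T_\infty^{p'}\hookrightarrow T_2^{p'}$, and the Carleson-measure reformulation of the Kato problem $\|\C(t\p^*_{t/2}\nabla_\|\cdot\vec b)\|_{L^\infty}\lesssim\|\vec b\|_{L^\infty}$. At no point does one need, or use, a square function of $u$ or of $\nabla u$. You correctly flag the rigor issues (variational conormals, regularizations, decay at infinity), which the paper indeed handles via the cutoff $\eta_{r,R}$ and Lemma~\ref{l2.7}, but the missing ingredient is the replacement of $u$ by its boundary trace $f$ together with the semigroup $\p_t$ acting on $f$ — that is the device that makes the Kato technology actually applicable.
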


\bp We aim to show that for $u$, a solution to $Lu=0$ in $\reu$, the normal derivative on the boundary is controlled by the tangential derivatives for some $1<p<\infty$.
 To this end, take $1<p'<\infty$ such that \eqref{Dp'} for the operator $L^*$ is solvable with the square function bounds,  
 consider any $g\in C_0^\infty$ with $\|g\|_{L^{p'}}\leq 1$, and denote by $w$ the weak solution to \eqref{Dp'} for the operator $L^*$ with boundary data $g$. Then 
\begin{equation}\label{eq3.3}
\int_\rn \partial_{\nu_A} u\, {\overline{g}}\, dx=\iint\limits_{\reu}  \nabla F(X) \,{\overline{A^*(X) \nabla w(X)}}\,dX= \int_\rn f\, {\overline{\partial_{\nu_{A^*}}w}}\, dx,
\end{equation}
\noindent for any $F\in C_0^\infty(\ree)$ such that $F\Bigr|_{\rn}=f$. This follows simply from the definition of the weak conormal derivative in \eqref{eq2.10}. Indeed, by definition
$$ \int_\rn f\, {\overline{\partial_{\nu_{A^*}}w}}\, dx =\iint\limits_{\reu}  \nabla F(X) \,{\overline{A^*(X) \nabla w(X)}}\,dX=\iint\limits_{\reu}  \nabla u(X) \,{\overline{A^*(X) \nabla w(X)}}\,dX,$$
\noindent where for the second equality we use the fact that $u-F\in  \widetilde W^{1,2}_0(\reu)$ and $w$ is a solution in the sense of \eqref{eq1.2} (evidently, $C_0^\infty$ functions are dense in $\widetilde W^{1,2}_0(\reu))$. A similar argument applies to show that 
$$\int_\rn \partial_{\nu_A} u\, \overline g\, dx=\iint\limits_{\reu} A(X) \nabla u(X) \, {\overline{\nabla w(X)}}\,dX.$$

We shall take an extension of $f$ in the form $F(x,t):=f(x)\eta_{r,R}(t)$, $R>>r$, where $\eta_{r,R}(t)\in C_0^\infty((-(R+r), R+r))$, $\eta_{r,R}(t)=1$ for $t\in (-R,R)$, and $|\eta'|\leq 1/r$. Here $r$ is chosen so that $\Delta_r$ contains $\supp (f)$. Then the right-hand side of \eqref{eq3.3} is equal to 
\begin{multline}\label{eq3.4}
\iint\limits_{\Delta_r\times (0,R)}  \nabla F(X) \,{\overline{A^*(X) \nabla w(X)}}\,dX+ \iint\limits_{\Delta_r\times (R, r+R)}  \nabla F(X) \,{\overline{A^*(X) \nabla w(X)}}\,dX\\[4pt]
= \sum_{j=1}^n \iint\limits_{\Delta_r\times (0,R)}  \partial_j f(x) \,{\overline{e_j \,A^*(X) \nabla w(X)}}\,dX + \iint\limits_{\Delta_r\times (R, r+R)}  \nabla F(X) \,{\overline{A^*(X) \nabla w(X)}}\,dX
\\[4pt]
= \sum_{j=1}^n \int\limits_{\Delta_r}  \partial_j f(x) \,\left(\int_0^R {\overline{e_j \,A^*(x) \nabla w(x,t)}}\,dt\right)\,dx + \iint\limits_{\Delta_r\times (R, r+R)}  \nabla F(X) \,{\overline{A^*(X) \nabla w(X)}}\,dX,
\end{multline}

\noindent where $e_j=(0,...,0,1,0,...,0)$ is the $j$-th basis vector of $\ree$, and we used the fact that by construction $F=f$ in $\Delta_r\times (0,R)$ and hence, in this range, it is independent of $t$.

We remark that, intuitively, one can think of the functions in parentheses above as an analogue of harmonic conjugates. That is, 
given a solution $w$ to $Lw=0$ in $\reu$,   a system of $L$-harmonic conjugates could be defined as follows: 
\begin{equation}\label{eq3.5}
\widetilde w_j(x,t):=-\int_{t}^\infty e_j\,A(x)\,\nabla w(x,s)\,ds, \qquad (x,t)\in \reu, \quad j=1,...,n+1,
\end{equation}

\noindent (see, e.g., \cite{FS} and \cite{KR} for analogous constructions in the case of harmonic functions and variable-coefficient operators in dimension 2, respectively).  Thus, \eqref{eq3.3}--\eqref{eq3.4} and forthcoming calculations are actually manipulations with harmonic conjugates in disguise. However, because of the weak nature of the available definition of solution and conormal derivative, we have to carefully keep track of the error terms. 

Going further, departing from the right-hand side of \eqref{eq3.4}, we can write 
\begin{multline}\label{eq3.6}
\int_\rn \partial_{\nu_A} u\, {\overline{g}}\, dx\\[4pt]
=\sum_{i,j=1}^n \int\limits_{\Delta_r}  \partial_j f(x) \,\left(\int_0^R {\overline{A_{ji}^*(x) \partial_i w(x,t)}}\,dt\right)\,dx\\[4pt] 
-\sum_{j=1}^n \int\limits_{\Delta_r}  \partial_j f(x) \, {\overline{A_{j,n+1}^*(x) g(x)}}\,dx +
\sum_{j=1}^n \int\limits_{\Delta_r}  \partial_j f(x) \, {\overline{A_{j,n+1}^*(x)  w(x,R)}}\,dx 
\\[4pt]+ \iint\limits_{\Delta_r\times (R, r+R)}  \nabla F(X) \,{\overline{A^*(X) \nabla w(X)}}\,dX=: I_R+II+E_{1,R}+E_{2,R}.
\end{multline}

First of all, we claim that the terms $E_{1,R}$ and $E_{2,R}$ both vanish as $R\to\infty$. Indeed, 
$$ E_{2,R} \leq C_{f,r} \left( \,\iint\limits_{2\Delta_r\times (R-r, 2r+R)} |w|^2\,dX\right)^{1/2} \leq C_{f,r} \,R^{1-\frac{n+1}{2}},$$
using the Cauchy-Schwarz and Caccioppoli inequalities for the first bound and Lemma~\ref{l2.7} for the second one. Analogously, using  Lemma~\ref{l2.7}, we see that $E_{1,R} \leq C_{f,r} R^{1-\frac{n+1}{2}}$ as well.

It remains to analyze $I_R$ and $II$. The integral in $II$ directly gives the desired bound by $\|\nabla_\|f\|_{L^p}$. The estimate on $I_R$ is trickier. Using, as before, the decay of $w$ assured by Lemma~\ref{l2.7}, we see that it is enough to bound
\begin{equation}\label{eq3.7} I:= \int_{\rn} \nabla_\| f(x)\cdot {\overline{A^*_\|(x)\nabla_\| v(x)}}\,dx, \end{equation}
where formally
\begin{equation}\label{eq3.8}
v(x,t):=\int_t^\infty w(x,s)\,ds, \qquad v(x):=v(x,0),
\end{equation}
as the error vanishes as $R\to\infty$. Here and in the sequel, $A_\|$ denotes an $n\times n$ block of the matrix $A$, that is, $\{A_{jk}\}_{j,k=1}^n$, and $L_\| =-\dv_\| A_\|\nabla_\|$, interpreted, as usually, in the weak sense.  

Let us discuss the definition of $v$. First of all,  $v(x,t)$ itself is well-defined for any $t>0$ as an absolutely convergent integral (using Lemma~\ref{l2.7}), and $\nabla v(\cdot, t)$ belongs to $L^2_{loc}(\rn)$ for any $t>0$ (again, using Lemma~\ref{l2.7} and Proposition~2.1 in \cite{AAAHK}). Further, $\nabla_\| v$ on the boundary is well-defined as an  $L^2_{loc}(\rn)$ function (using the fact that $w\in \widetilde W^{1,2}(\reu)$ and aforementioned considerations), and $\nabla_\|v(\cdot, t)$ converges to $\nabla_\|v(\cdot)$ in the sense of distributions and in $L^2_{loc}(\rn)$.
This clarifies the sense of \eqref{eq3.7}--\eqref{eq3.8}. We note for the future reference that $\partial_tv=w$ belongs to $\widetilde W^{1,2}(\reu)$ and satisfies Lemma~\ref{l2.7}, as well as \eqref{eq2.1}.

We claim that 
\begin{equation}\label{eq3.9}
\left|\int_{\rn} \nabla_\| f(x)\cdot {\overline{A^*_\|(x)\nabla_\| v(x)}}\,dx\right|\\[4pt]
\lesssim \|\nabla_\| f\|_{L^p}\left(\|\A (t\nabla \partial_t v)\|_{L^{p'}}+\|N_*(\partial_tv)\|_{L^{p'}}\right), 
\end{equation}

\noindent where   $\A$, as before,  stands for the square function \eqref{eq1.13}.
Estimate \eqref{eq3.9} is one of the core components of our approach to the regularity problem and we will concentrate on its proof in the next section. It is interesting to point out that the particular quadratic form appearing here is important:  even though \eqref{eq3.9} holds, one cannot expect to
deduce that $\nabla_\|
v \in L^{p'}$, except in the range $2-\eps<p<2+\eps$
(for us, $1<p<2+\eps$ in any case, so
only the lower bound here is a further restriction).   Indeed, specializing to the block case,
an $L^{p'}$ bound for $\nabla_\| v$ for $p'>2+\eps$ (i.e, $p<2-\eps$),
would contradict the counter-example of Kenig  (see \cite[pp 119-120]{AT}).
This has to do with the failure of the Hodge decomposition
for $L_\|:= -\dv_\| A_\| \nabla_\|$, outside of the stated range of $p$.
Indeed, to test the $L^{p'}$ norm of
$A^*_\| \nabla_\|v$ (which by ellipticity of $A^*_\|$ is equivalent to the
$L^{p'}$ norm of $\nabla_\|v$), would  require
that we test against an arbitrary vector $\vec{h} \in L^p(\rn, \mathbb{C}^n)$.   Observe that
the form $\int \vec{h} \cdot \overline{A^*_\|\nabla_\| v}$ is equivalent to the 
form on the left-hand side of \eqref{eq3.9}, only if 
we have a Hodge
decomposition $\vec{h} = \nabla_\| f + \vec{g}$, where $\vec{g}$ satisfies
$\div_\| A_\|\, \vec{g} =0$, and $f\in \dot{L}^p_1$.  But such a Hodge decomposition holds only
in the range $2-\eps<p<2+\eps$.   

For now, let us finish the proof of the Theorem assuming \eqref{eq3.9}. 

Recall that $\partial_t v=w$ and by definition $w$ is the solution to \eqref{Dp'} for $L^*$ satisfying both the non-tangential maximal function and the square function estimates. All in all, then \eqref{eq3.6}--\eqref{eq3.9} guarantee that for any $g\in L^{p'}$,
\begin{equation}\label{eq3.10}
\int_\rn \partial_{\nu_A} u\, {\overline{g}}\, dx \lesssim \|\nabla_\| f\|_{L^p} \|g\|_{L^{p'}}.
\end{equation}

\noindent  Hence, 
\begin{equation}\label{eq3.11}
\|\partial_{\nu_A} u\|_{L^p(\RR^n)} \lesssim \|\nabla_\| f\|_{L^p(\RR^n)}, 
\end{equation}

\noindent with $f=u|_\rn$. This finished the proof of Theorem ~\ref{t3.1}, modulo \eqref{eq3.9}.  \ep

\section{Proof of the main  estimate (\ref{eq3.9}) 
}\label{s2}

In this section, we establish the ``main estimate" \eqref{eq3.9} (re-stated as Theorem \ref{t4.10}
below)
thereby completing the proof of Theorem \ref{t3.1}. 
We shall adapt the proof of \cite[estimate (5.3)]{AAAHK}, which is 
essentially the case $p=2$ of \eqref{eq3.9}, and which exploits the
solution of the Kato problem.
In our case, we require $L^p$ versions of the Kato estimate (cf.  \eqref{eq4.2} below).

Let us start by recalling a few results regarding the square roots of elliptic operators and the corresponding square functions that will be used throughout the proof. Retain the definitions of $N_*$, $\N$ and $\A$ from Sections 1 and 3, and let
\begin{eqnarray*} 
\mathcal{A}_q^{\alpha}(F)(x) &=& \left(\iint\limits_{|x-z|<\alpha t} \left|F(z,t)\right|^q \frac{dzdt}{t^{n+1}} \right)^{1/q}, \\
{\C}_{q}(F)(x) &=& \sup_{\Delta \ni x} \left(\frac{1}{|\Delta|} \iint\limits_{\widehat{\Delta}} |F(y,s)|^q \frac{dyds}{s} \right)^{1/q}, 
\end{eqnarray*} 
where, as before,  $\widehat{\Delta}=\{(x,t): \text{dist}(x,\Delta^c)\geq t\}$ and $\Delta(x,t)= \left\{y \in \rn: \left| x-y \right| < t\right\}$.  The aperture index $\alpha$ will usually be omitted unless it plays an explicit role in the proof. Also, as per \eqref{eq1.13}, $\A=\A_2$, and $\C=\C_2$.

For a Lebesgue measurable set $E$, we let ${\bf M}(E)$ denote the
collection of  measurable functions on $E$.  For $0<p,q<\infty$ we define the following tent spaces:
\begin{eqnarray*} 
T^p_q(\reu) &=& \left\{F\in {\bf M}(\mathbb{R}^{n+1}_+): {\A}_q(F) \in L^p(\rn)\right\},\\ 
T_{\infty}^p(\reu) &=& \left\{F\in {\bf M}(\mathbb{R}^{n+1}_+):N_*(F) \in  L^p(\rn)\right\},\\
\widetilde{T}_{\infty}^p(\reu) &=& \left\{F\in {\bf M}(\mathbb{R}^{n+1}_+):\N(F) \in  L^p(\rn)\right\},\\ 
T_{q}^{\infty}(\reu) &=&\left \{F\in {\bf M}(\mathbb{R}^{n+1}_+): {\C}_{q}(F) \in L^{\infty}(\rn)\right \},\\ 
{\bf T}_{q}^{\infty}(\reu) &=& \left\{F\in {\bf M}(\mathbb{R}^{n+1}_+): {\C}_{q}({\bf s}(F)) 
\in L^{\infty}(\rn)\right \},
\end{eqnarray*}
where in the last definition, for $F\in{\bf M}(\mathbb{R}^{n+1}_+)$, we set
$${\bf s}(F)(x,t):= \sup_{(y,s)\in W(x,t)} \left|F(y,s)\right|$$
(the notation ``sup" is interpreted as the essential supremum) and $W(x,t) = \Delta(x,t) \times (t/2, 3t/2)$. The spaces $T^p_q(\reu)$, $0<p,q\leq \infty$ were first introduced by Coifman, Meyer and Stein in \cite{CMS}. The spaces $\widetilde{T}_{\infty}^p(\reu)$ and ${\bf T}_{q}^{\infty}(\reu)$ started appearing in the literature more recently, naturally arising for elliptic PDEs with non-smooth coefficients.

As usual, we say that a family of operators $\{T_t\}_{t>0}$
satisfies $L^p-L^q$ off-diagonal estimates, $1\leq p\leq q\leq \infty$, if for arbitrary
closed sets $E,F\subset \RR^n$
\begin{equation}\label{eq4.1}
\|T_tf\|_{L^q(F)}\leq Ct^{\left(\frac nq-\frac
np\right)}\,e^{-\frac{{\rm dist}\,(E,F)^2}{ct}}\,\|f\|_{L^p(E)},
\end{equation}

\noindent for every $t>0$ and every $f\in L^p(\RR^n)$ supported in
$E$. We remark that whenever $L$ and $L^*$ both satisfy the De Giorgi-Nash-Moser property, 
the heat semigroup  $\p_t:= e^{-t^2 L_\|}$, $t>0$, satisfies pointwise Gaussian upper bounds, and hence, $L^p-L^q$ off-diagonal estimates for all $1\leq p,q\leq \infty$. Indeed, if the solutions of $L$ have the De Giorgi-Nash-Moser bounds, then so do the solutions to $L_\|$. To see that, let $\Delta =\Delta(x,r)$ be an $n$-dimensional ball, and
let $B=B((x, r),r/2)$ be the corresponding $(n+1)$-dimensional ball.
Let $u=u(x)$ solve $L_\| u=0$ in $2\Delta$.  Set
$U(x,t):= u(x)$, so that $U$ is $t$-independent.  Since $U$
and also the coefficients of $L$ are $t$-independent, we have that
$LU(x,t) = L_\| u(x) = 0$ for all  $(x,t)\in 2B$. Since $U$ satisfies the De Giorgi-Nash-Moser property in $B$, we conclude that  $u$ satisfies the De Giorgi-Nash-Moser property in $\Delta$. Furthermore, if the solutions to both $L_\|$ and $L^*_\|$ have the De Giorgi-Nash-Moser bounds, then the 
 heat kernel, that is, the kernel of the semigroup  $\p_t:= e^{-t^2 L_\|}$, $t>0$, satisfies pointwise Gaussian upper bounds, and enjoys  Nash type local H\"older continuity, by \cite{AT} (Theorem 10, p. 34, {\it{loc. cit.}}), as desired.

The latter, in turn, imply that the 
square root estimate, 
\begin{equation}\label{eq4.2} \left\|\sqrt{L_\|} f\right\|_{L^p}\leq C \left\|\nabla f\right\|_{L^p},  
\end{equation}
holds for all $1<p<\infty$, $f\in C_0^\infty$, (the case $p=2$ corresponds to the Kato problem solved in \cite{AHLMcT}, and the generalization to other values of $p$ (given the $L^2$ case), in the presence of Gaussian bounds, can be found in \cite{AT}). The Gaussian bounds also imply that
\begin{eqnarray}\label{eq4.3}  &&\left\|\A(\theta_t f)\right\|_{L^p}\leq C \|f\|_{L^p},\\[4pt]
&&\qquad \mbox{with $\theta_t= t\sqrt{L_\|}\,\p_t$, $\theta_t= t^2\nabla_\|\sqrt{L_\|}\,\p_t$, $\theta_t= t^3 L_\|^{3/2}\,\p_t$, $\theta_t= t^4\nabla_\| L_\|^{3/2}\,\p_t$, }\nonumber
\end{eqnarray}
holds for all $1<p<\infty$, $f\in L^p$. In the required generality these square function estimates do not seem to be explicitly stated anywhere, but they are all essentially well known.
 Indeed, one may verify the case $p=2$ by a standard ``quasi-orthogonality" argument;   for $p> 2$,
one may follow the well known argument of  \cite{FS} to get a Carleson measure estimate when 
$f\in L^\infty$, and then use tent space interpolation to obtain all $p\in [2,\infty)$;  for $p<2$,
one may first prove the Hardy space bound $\left\|\A(\theta_t f)\right\|_{L^1}\leq C \|f\|_{H^1}$,
by a standard argument using the atomic decomposition of $H^1$, and the local H\"older continuity 
of the heat kernel, and then interpolate to get the full range of $p$.
We omit the details.

Going further, we record the following result essentially following from the Poincar\'e inequality. It was proved for $p=2$ in \cite{AAAHK} (Lemma 3.5, {\it{loc.~cit.}}).

\begin{lemma}\label{l4.4} Assume that  a family of operators $\{R_t\}_{t>0}$ satisfies $L^2-L^2$ off-diagonal estimates and that $R_t 1=0$ (in the sense of $L^2_{loc}(\RR^n)$). Then 
\begin{equation}\label{eq4.5}
\|\A(R_t F)\|_{L^p}\leq C \|\A(t\nabla_\| F)\|_{L^p}, 
\end{equation} 
for every $F$ with $t\nabla_\| F\in T^p_2$, i.e., such that the right-hand side of \eqref{eq4.5} is finite, and for every $1<p<\infty$. 
\end{lemma}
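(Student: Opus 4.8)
The plan is to reduce the estimate to a pointwise bound on the square functions, exploiting the off-diagonal decay to localize and then the Poincar\'e inequality together with the cancellation hypothesis $R_t 1 = 0$. First I would fix a point $x\in\rn$ and, for each scale $t>0$, decompose $F$ near the Whitney region above $x$ by writing $F = (F - c_{x,t}) + c_{x,t}$, where $c_{x,t}$ is the average of $F$ over the ball $\Delta(x,t)$ (or a slightly dilated ball, chosen so the off-diagonal tails are controlled). Because $R_t 1 = 0$, the constant piece is annihilated: $R_t F = R_t(F - c_{x,t})$. Then I would split the remaining function dyadically into annular pieces $F-c_{x,t} = \sum_{k\ge 0}(F-c_{x,t})\mathbf{1}_{A_k}$ where $A_0 = 2\Delta(x,t)$ and $A_k = 2^{k+1}\Delta(x,t)\setminus 2^k\Delta(x,t)$, and use the $L^2$–$L^2$ off-diagonal estimates \eqref{eq4.1} to bound $\|R_t(F-c_{x,t})\mathbf{1}_{A_k}\|_{L^2(\Delta(x,t))}$ by $e^{-c4^k}$ times a local $L^2$ average of $F-c_{x,t}$ over $A_k$. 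On each such annulus the Poincar\'e inequality controls the oscillation of $F$ by $2^k t$ times the $L^2$ average of $\nabla_\| F$ over a comparable ball, so summing in $k$ (the exponential beating the polynomial growth $2^{k}$) gives the pointwise-in-$(x,t)$ estimate
\[
\left(\fint_{\Delta(x,t)}|R_t F|^2\right)^{1/2} \;\lesssim\; \sum_{k\ge 0} e^{-c4^k} 2^{k}\left(\fint_{2^{k+1}\Delta(x,t)}|t\nabla_\| F|^2\right)^{1/2}.
\]

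Having this, the next step is to insert it into the square function. Since $\A(R_t F)(x)^2 = \iint_{|x-y|<t}|R_t F(y,t)|^2\,\frac{dy\,dt}{t^{n+1}}$, and $|R_t F(y,t)|^2$ averaged over $|x-y|<t$ is comparable to $\fint_{\Delta(x,t)}|R_t F|^2$ up to harmless aperture changes, the pointwise bound above yields a domination of $\A(R_t F)$ by a sum over $k$ of dilated square functions of $t\nabla_\| F$ with rapidly decaying coefficients. I would then invoke the standard fact that changing the aperture in the square function is bounded on $L^p$ for $1<p<\infty$ (with at most polynomial dependence on the aperture, which is absorbed by the exponential gain $e^{-c4^k}$), so that each term is bounded by $C e^{-c4^k/2}\|\A(t\nabla_\| F)\|_{L^p}$; summing in $k$ gives \eqref{eq4.5}. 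A small technical point is to make sense of all the integrals for general $F$ with $t\nabla_\| F\in T^p_2$: one can either argue first for $F$ smooth with compact support and pass to the limit, or note that the finiteness of the right-hand side is precisely what is assumed.

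The main obstacle I anticipate is the interplay between the fixed aperture built into $\A$ (here $|x-y|<t$) and the dilated balls $2^{k+1}\Delta(x,t)$ that arise from the off-diagonal localization: one must be careful that the Fubini/averaging step relating $\iint_{|x-y|<t}|R_t F(y,t)|^2$ to $\fint_{\Delta(x,t)}|R_t F|^2$, and the subsequent re-expansion of the wide averages $\fint_{2^{k+1}\Delta}|t\nabla_\| F|^2$ back into a genuine square function, is done with aperture constants that remain uniformly controlled (or at worst grow polynomially in $k$). This is exactly where the exponential off-diagonal decay is essential — it is what makes the series converge despite the crude aperture losses — and where the case $p\neq 2$ differs from the $L^2$ case of \cite{AAAHK}, since for $p\ne 2$ one genuinely needs the $L^p$ boundedness of the change-of-aperture operator rather than a simple Fubini argument.
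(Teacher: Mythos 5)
Your proposal is essentially the paper's own argument: decompose $F-F_{x,2t}$ into a near piece plus dyadic annular pieces, use $R_t1=0$ to subtract the average, apply the $L^2$–$L^2$ off-diagonal decay to each annulus, invoke Poincar\'e to produce $t\nabla_\| F$, and then absorb the aperture change in $\A$ via the polynomial change-of-aperture estimate dominated by the Gaussian/exponential gain. The one place you are slightly imprecise is in asserting that Poincar\'e ``on each such annulus'' directly controls $\|F-c_{x,t}\|_{L^2(A_k)}$; the paper handles the mismatch between the fixed reference constant $c_{x,t}=F_{x,2t}$ and the natural average on the ball $2^{k+1}\Delta$ by the telescoping $F-F_{x,2t}=(F-F_{x,2^{j+1}t})+(F_{x,2^{j+1}t}-F_{x,2^{j}t})+\dots+(F_{x,2^{2}t}-F_{x,2t})$, which you should make explicit (a cruder bound without telescoping also works, at the cost of an extra $2^{kn/2}$ which the exponential still absorbs).
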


In fact, a weaker off-diagonal decay rate of $R_t$ than the exponential estimates above  would suffice, but for all relevant choices of $R_t$'s in the present paper the exponential decay will be valid. 

\bp Let us denote $\Delta_{x,t}=\{y\in \RR^n:\,|x-y|<t\}$, $S_j(\Delta_{x,t})=\Delta_{x, 2^{j+1}t}\setminus \Delta_{x, 2^{j}t}$, $j\in\NN$, and $F_{x,t}:=\fint_{\Delta_{x,t}} F(y,t)\,dy$. Then 
\begin{multline}\label{eq4.6}
\|\A^1(R_t F)\|_{L^p}=\left(\int_{\RR^n}\left(\iint_{|x-y|< t}|R_tF(y,t)|^2 \frac{dy dt}{t^{n+1}} \right)^{p/2}dx\right)^{1/p}\\[4pt]
\leq \left(\int_{\RR^n}\left(\iint_{|x-y|< t}|R_t([F(\cdot,t)-F_{x,2t}]\,\chi_{\Delta_{x,2t}})(y)|^2 \frac{dy dt}{t^{n+1}} \right)^{p/2}dx\right)^{1/p}\\[4pt]
+ \sum_{j=1}^{\infty} \left(\int_{\RR^n}\left(\iint_{|x-y|< t}|R_t([F(\cdot,t)-F_{x,2t}]\,\chi_{S_j(\Delta_{x,t})})(y)|^2 \frac{dy dt}{t^{n+1}} \right)^{p/2}dx\right)^{1/p}\\[4pt]
=I+II.
\end{multline}

Using the uniform in $t$ boundedness of $R_t$ in $L^2(\RR^n)$ (following from the $L^2-L^2$ off-diagonal estimates) and then the Poincar\'e inequality, we deduce that 
\begin{multline}\label{eq4.7} I\lesssim  \left(\int_{\RR^n}\left(\iint_{|x-y|< 2t}|F(y,t)-F_{x,2t}|^2 \frac{dy dt}{t^{n+1}} \right)^{p/2}dx\right)^{1/p} \\[4pt]
\lesssim  \left(\int_{\RR^n}\left(\iint_{|x-y|< 2t}|t\nabla_\|F(y,t)|^2 \frac{dy dt}{t^{n+1}} \right)^{p/2}dx\right)^{1/p} =  \|\A^2(t\nabla_\| F)\|_{L^p}.
\end{multline}

\noindent On the other hand, by $L^2-L^2$ off-diagonal estimates,
\begin{multline}\label{eq4.8} II\lesssim  \sum_{j=1}^{\infty} \left(\int_{\RR^n}\left(\int_0^\infty e^{-(2^jt)^2/(ct^2)}\int_{S_j(\Delta_{x,t})}|F(y,t)-F_{x,2t}|^2\, dy \,\frac{ dt}{t^{n+1}} \right)^{p/2}dx\right)^{1/p} \\[4pt]
\lesssim  \sum_{j=1}^{\infty}\sum_{k=1}^j \left(\int_{\RR^n}\left(\int_0^\infty e^{-\,C\,4^j} \left(\frac {2^j}{2^k}\right)^n\int_{\Delta_{x,2^{k+1}t}}|F(y,t)-F_{x,2^{k+1}t}|^2\, dy \,\frac{ dt}{t^{n+1}} \right)^{p/2}dx\right)^{1/p}, \\[4pt]
\end{multline}

\noindent where we used the representation 
$$F(y,t)-F_{x,2t}=F(y,t)-F_{x,2^{j+1}t}+ F_{x,2^{j+1}t}-F_{x,2^{j}t}+...+F_{x,2^{2}t}-F_{x,2t}$$
for the second inequality. Using now the Poincar\'e inequality, we have 
\begin{multline}\label{eq4.9} II
\lesssim  \sum_{j=1}^{\infty}\sum_{k=1}^j \left(\int_{\RR^n}\left(\int_0^\infty e^{-\,C\,4^j}2^{jn} 2^{-kn}\, 2^{2k}\int_{\Delta_{x,2^{k+1}t}}|t\nabla_\| F(y,t)|^2\, dy \,\frac{ dt}{t^{n+1}} \right)^{p/2}dx\right)^{1/p} \\[4pt]\lesssim  \sum_{k=1}^\infty 2^{-Mk} \left(\int_{\RR^n}\left(\int_0^\infty \int_{\Delta_{x,2^{k+1}t}}|t\nabla_\| F(y,t)|^2\, dy \,\frac{ dt}{t^{n+1}} \right)^{p/2}dx\right)^{1/p}, \\[4pt]
\end{multline}

\noindent where $M>n/2$ can be arbitrarily large constant. However, \eqref{eq4.9} simply says that 
$$ II
\lesssim \sum_{k=1}^\infty 2^{-Mk} \left\|\A^{2^{k+1}}(t\nabla_\| F)\right\|_{L^p},$$
which in turn implies that 
$$ I+II
\lesssim \sum_{k=0}^\infty 2^{-Mk+C(n,p) k} \|\A^{1}(t\nabla_\| F)\|_{L^p}\lesssim \|\A^{1}(t\nabla_\| F)\|_{L^p},$$
since $M$ can always be taken large enough. We remark that the sharp constant $C(n,p)$ appearing in the change-of-aperture square function estimates was obtained in \cite{AuscherCR}, although for the purposes of the present argument we only need to know that the dependence on the aperture is polynomial, and this was already established in \cite{CMS}.  
\ep

At this point we are ready to turn to the proof of estimate \eqref{eq3.9}. 
\begin{theorem}\label{t4.10} Assume that $L$ is an elliptic operator with $t$-independent coefficients, and that $L$ and $L^*$ satisfy the De Giorgi-Nash-Moser bounds.  Let $w$ be the weak solution to the Dirichlet problem for $L^*$ with some $C_0^\infty$ data, guaranteed by Lemma~\ref{l2.4}, and $v$ be its antiderivative, defined by \eqref{eq3.8}. 
Then 
\begin{multline}\label{eq4.11}
\left|\int_{\rn} \nabla_\| f(x)\cdot {\overline{A^*_\|(x)\nabla_\| v(x,0)}}\,dx\right|\\[4pt]
\leq C \,\|\nabla_\| f\|_{L^p}\left(\|\A(t\,\nabla (\partial_t v))\|_{L^{p'}}+\|N_*(\partial_tv)\|_{L^{p'}}\right)
\end{multline}

\noindent for every  $f\in C_0^\infty(\RR^n)$ and $1<p<\infty$. Here  $C>0$ depends on  the  standard constants  only. 
\end{theorem}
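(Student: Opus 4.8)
The plan is to reduce the bilinear form on the left of \eqref{eq4.11} to a controllable expression by inserting the heat semigroup resolution of the identity and integrating by parts in $t$, following the scheme of \cite[estimate (5.3)]{AAAHK} but now at the level of $L^p$–$L^{p'}$ duality rather than $L^2$. Concretely, one writes $\nabla_\| v(\cdot,0)$ using the Calder\'on-type reproducing formula associated with $\Pe:= e^{-t^2 L_\|}$, so that $\nabla_\| v(x,0) = -\int_0^\infty \partial_t\big(\Pe^2 \nabla_\| v(\cdot,t)\big)\,dt$ (exploiting $\Pe^2\nabla_\| v(\cdot,t)\to \nabla_\| v(\cdot,0)$ as $t\to 0$ and $\to 0$ as $t\to\infty$, which is where Lemma~\ref{l2.7} and the decay of $v$ are needed to justify convergence). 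Expanding $\partial_t$ by the product rule produces three kinds of terms: one where $\partial_t$ hits the semigroup giving a factor $t L_\| \Pe^2$, one where it hits $\nabla_\| v(\cdot,t)$ giving $\Pe^2 \nabla_\|\partial_t v(\cdot,t) = \Pe^2\nabla_\| w(\cdot,t)$, and (after using the equation $L_\| v = -\partial_t(\text{stuff})$ or rather the relation $\partial_t v = w$ together with the structure of $L$) terms that can be re-expressed through $\partial_t v$ and its gradient. The pairing against $\nabla_\| f$ is then organized, via the $L^\infty$ adjoint heat semigroup acting on the $f$-side, into a $T^p_2$–$T^{p'}_2$ tent-space duality plus one $T^p_\infty$–$T^{p'}_1$ (nontangential maximal against Carleson) pairing.

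**Next I would** handle each resulting term. The square-function side involving $f$ is controlled by \eqref{eq4.2} (the $L^p$ Kato estimate: $\|\A(t\sqrt{L_\|}\,\Pe f)\|_{L^p}$ and its relatives from \eqref{eq4.3} are $\lesssim \|\nabla_\| f\|_{L^p}$) — here is the only place the full strength of the Kato solution and its $L^p$ extension enters. For the $v$-side, the key is that the relevant operators applied to $\nabla_\| v(\cdot,t)$ all either come with a built-in $t\nabla_\|$ (so that Lemma~\ref{l4.4} applies, turning $\|\A(R_t \nabla_\| v)\|$ into $\|\A(t\nabla_\|\nabla_\| v)\|$, which one then relates to $\|\A(t\nabla w)\|$ via $\partial_t v = w$ and interior Caccioppoli \eqref{eq2.1}) or are genuine averaging operators annihilating constants so that the hypothesis $R_t 1 = 0$ of Lemma~\ref{l4.4} holds. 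The terms where no $t\nabla_\|$ is manufactured — typically the ones carrying $t L_\|\Pe^2$ hitting $v$ itself, or boundary-type contributions — are the ones that must instead be absorbed into $\|N_*(\partial_t v)\|_{L^{p'}} = \|N_*(w)\|_{L^{p'}}$ through a Carleson-measure argument: one shows the companion multiplier applied to $\nabla_\| f$ generates a Carleson measure with norm $\lesssim \|\nabla_\| f\|_{L^p}$ (again via \eqref{eq4.3} and the $L^\infty$ bound / tent-space interpolation remarks), and pairs it against $\mathbf{s}(w)$.

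**The main obstacle** I expect is bookkeeping of the error terms arising from the fact that $v$ is only a \emph{weak} solution and $w$ only satisfies $L^*w=0$ in the weak sense: one cannot freely commute $\nabla_\|$, $\partial_t$, and $L_\|$ on $v$, and the identity $L_\| v = \text{(transversal terms)}$ must be derived carefully from the full equation $L^* w = 0$ written out in block form $A^* = \begin{pmatrix} A^*_\| & \mathbf{b} \\ \mathbf{c} & d\end{pmatrix}$, integrating the transversal part from $t$ to $\infty$ and controlling the resulting boundary contribution at $t=\infty$ by Lemma~\ref{l2.7} (exactly as the $E_{i,R}$ terms were dispatched in the proof of Theorem~\ref{t3.1}). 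Managing the convergence of the $t$-integrals at both endpoints, and ensuring every intermediate operator genuinely satisfies either the Kato-type bound \eqref{eq4.3}, the off-diagonal hypotheses of Lemma~\ref{l4.4}, or a Carleson estimate, is where the real work lies; the dyadic-annulus summation already illustrated in the proof of Lemma~\ref{l4.4} is the template for absorbing the tails. Once all pieces are in place, summing the estimates and invoking tent-space duality $\big(T^p_2\big)^* = T^{p'}_2$ and $\big(T^p_\infty\big)^* = T^{p'}_1$ (Coifman--Meyer--Stein, \cite{CMS}) yields \eqref{eq4.11} for all $1<p<\infty$.
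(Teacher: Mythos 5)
Your proposal captures the overall strategy of the paper's argument---insert the heat semigroup $\p_t=e^{-t^2L_\|}$, expand $\partial_t$, split according to the block structure of $A^*$, invoke the $L^p$ Kato estimate and the square-function bounds \eqref{eq4.3}, and handle one term via a Carleson-measure/tent-space factorization---but there are two concrete issues, one minor and one serious.

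\textbf{Minor: the missing second integration by parts in $t$.} After the FTC step you get two pieces, and the one where $\partial_t$ hits $v$ yields $\int_0^\infty\!\int_\rn \nabla_\| f\cdot\overline{A^*_\|\Pe^2\nabla_\|\partial_t v}\,dx\,dt$, with no $t$-weight. Tent-space duality $T^p_2\times T^{p'}_2$ requires a $\frac{dxdt}{t}$ pairing, so you need a factor $t$ on both entries. The paper's proof performs a \emph{second} integration by parts $\int_0^\infty g\,dt=-\int_0^\infty t\,g'\,dt$ precisely to produce these $t$-weights (this is how \eqref{eq4.13} arrives at the terms $I$, $II$, $III$ each carrying an explicit $t\,dtdx$). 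Also, the paper places the semigroup on the \emph{$f$-side} rather than on $\nabla_\| v$, which makes the Kato input $\|\A(t\sqrt{L_\|}\,\p_t\sqrt{L_\|}f)\|_{L^p}\lesssim\|\nabla_\| f\|_{L^p}$ directly applicable via \eqref{eq4.2}--\eqref{eq4.3}; with your placement one must adjoint carefully, bearing in mind $\p_t^*=e^{-t^2L^*_\|}$.

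\textbf{Serious: the proposed Caccioppoli step is a dead end.} You propose to apply Lemma~\ref{l4.4} to $\nabla_\| v$ (or to $\nabla_\| w$ after writing $\Pe^2=(\Pe^2-I)+I$), obtaining a bound in terms of $\|\A(t\nabla_\|\nabla_\| v)\|_{L^{p'}}$ (respectively $\|\A(t\nabla_\|\nabla_\| w)\|_{L^{p'}}$), and then claim to pass to $\|\A(t\nabla w)\|_{L^{p'}}$ via $\partial_t v=w$ and interior Caccioppoli \eqref{eq2.1}. This step does not work: Caccioppoli for a solution controls $\nabla w$ in terms of $w$, \emph{not} $\nabla^2 w$ in terms of $\nabla w$. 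For divergence-form operators with merely $L^\infty$ complex coefficients, there is no $W^{2,2}_{loc}$ interior regularity --- second derivatives are not controlled, and in general $\nabla_\|^2 v$, $\nabla_\|^2 w$ cannot be bounded by the quantities on the right of \eqref{eq4.11}. The paper's proof is deliberately structured to avoid any such term: in the term $III$ the $\nabla_\|$ is integrated by parts off $f$ onto $\nabla_\| v$ producing $L^*_\|v$, and then the equation $L^*v=0$ is used to rewrite $L^*_\|v$ entirely through $\partial_t v$ and $\nabla\partial_t v$ (the splitting $III=III_1+III_2+III_3$ of \eqref{eq4.16}). Lemma~\ref{l4.4} is then applied \emph{only} to $\partial_t v$, yielding $\|\A(t\nabla_\|\partial_t v)\|_{L^{p'}}\leq\|\A(t\nabla\partial_t v)\|_{L^{p'}}$, which is admissible. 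You gesture at ``using the equation,'' but the route you actually spell out (Lemma~\ref{l4.4} on $\nabla_\| v$ followed by Caccioppoli) would derail the argument; the decomposition must be performed first, so that two tangential derivatives never land on $v$ or $w$.
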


Clearly, \eqref{eq4.11} is only of interest when the right-hand side of \eqref{eq4.11} is finite. It is the case, e.g.,  when the Dirichlet problem is solvable in $L^{p'}$, and the solution satisfies the square function estimates. However, this information is not needed to establish \eqref{eq4.11}.

\noindent{\it Remark.} For future reference, we point out that the argument will establish a more general result. To be specific, for $L$ and $f$ as in the statement of Theorem~\ref{t4.10}  we shall demonstrate that
\begin{multline}\label{eq4.12}
\left|\int_{\rn} \nabla_\| f(x)\cdot {\overline{A^*_\|(x)\nabla_\| v(x,0)}}\,dx\right|\\[4pt]
\leq C \,\|\nabla_\| f\|_{L^p}\left(\|\A(t\,\nabla (\partial_t v))\|_{L^{p'}}+\|N_*(\partial_tv)\|_{L^{p'}}\right)
\\[4pt]+\left|\iint_{\reu} \partial_t^2\p_t   f(x)\,{\overline{L^* v(x,t)}}\,tdtdx\right|, 
\end{multline}

\noindent for every $v:\reu\to\RR$ with reasonable decay properties sufficient to justify convergence of involved integrals. Here $\p_t:= e^{-t^2 L_\|}$, $t>0$, is, as before, the heat semigroup associated to the operator $L_\|$. Clearly, when $v$ is a solution, as in the statement of Theorem~\ref{t4.10}, the last integral on the right hand side of \eqref{eq4.12} is equal to zero and \eqref{eq4.12} reduces to \eqref{eq4.11}.

\noindent {\it{Proof of Theorem~\ref{t4.10}}}. As discussed in the paragraph above \eqref{eq3.9}, the left-hand side of \eqref{eq4.11} is an absolutely convergent integral since $\nabla_\|v(\cdot, 0)\in L^2_{loc}(\rn)$ and $f\in C_0^\infty(\rn)$. It will be convenient though to work with a particular approximation. 

Let us recall that $f_\eps:=\p_\eps f$ converges to $f$ as $\eps\to 0$ weakly in $\dot L^p_1(\rn)$ for all $1<p<2+\eps$. One way to see this is to invoke the bound 
$$ \|\N(\nabla \p_t f)\|_{L^p} \leq C  \|\nabla_\|f\|_{L^p}, \quad 1<p<2+\eps, $$
(it was proved for the Poisson semigroup in \cite{M}, but the same proof, or, actually, its simplified version, applies to the case of the heat semigroup as well). We only remark that the {\it full} range of $1<p<2+\eps$ in the bounds for the heat semigroup $\p_\eps$ is achievable due to the De Giorgi-Nash-Moser estimates on $L$ which yield the  Gaussian bounds on the heat semigroup of $L_\|$ (see the discussion preceding \eqref{eq4.2}).  Having this at hand, we can use the general fact that the estimates on the non-tangential maximal function of  the gradient imply weak convergence to the boundary data in $\dot L^p_1(\rn)$ (see the statements (i) and (ii) of Lemma~2.2 in the current manuscript and recall that they do not use the assumption that $u$ is a solution). In addition, we know that the off-diagonal decay of the heat semigroup assures that $t^k\partial_t^k \p_tf$, as well as $t^{k+1}\nabla_\|\partial_t^k \p_tf$, $k=0,1,2,...,$ decay exponentially away from the support of $f\in C_0^\infty(\rn)$ in the sense of Gaffney off-diagonal estimates, that is, \eqref{eq4.1} holds (with a restriction $q<2+\eps$ in the case of estimates on $t^{k+1}\nabla_\|\partial_t^k \p_tf$).

It follows, in particular, that $\int_{\rn} \nabla_\| \p_\eps f(x)\cdot {\overline{A^*_\|(x)\nabla_\| v(x,0)}}\,dx$ converges to the left-hand side of \eqref{eq4.11} as $\eps \to 0$: we can apply off-diagonal decay of the gradient of the heat semigroup far away from the support of $f$ and the aforementioned weak convergence in the remaining portion of the integral, as $\nabla_\|v \in L^2_{loc}(\rn)$. 
 
Now, with $f_\eps=\p_\eps f$,  
\begin{multline}\label{eq4.13}
\int_{\rn} \nabla_\| f_\eps (x)\cdot {\overline{A^*_\|(x)\nabla_\| v(x,0)}}\,dx\\[4pt]
=-\iint_{\reu} \frac{\partial}{\partial t}\left(\nabla_\| \p_t   f_\eps (x)\cdot {\overline{A^*_\|(x)\nabla_\| v(x,t)}}\right)\,dtdx\\[4pt]
=-\iint_{\reu} \nabla_\| \partial_t\p_t   f_\eps(x)\cdot {\overline{A^*_\|(x)\nabla_\| v(x,t)}}\,dtdx\,-\iint_{\reu} \nabla_\| \p_t   f_\eps (x)\cdot {\overline{A^*_\|(x)\nabla_\| \partial_tv(x,t)}}\,dtdx\\[4pt]
=\iint_{\reu} \nabla_\| \p_t   f_\eps (x)\cdot {\overline{A^*_\|(x)\nabla_\| \partial_t^2v(x,t)}}\,tdtdx\\[4pt]
\qquad +2 \iint_{\reu} \nabla_\| \partial_t\p_t   f_\eps (x)\cdot {\overline{A^*_\|(x)\nabla_\| \partial_tv(x,t)}}\,tdtdx
\\[4pt]
\,\qquad +\iint_{\reu} \nabla_\| \partial_t^2\p_t   f_\eps (x)\cdot {\overline{A^*_\|(x)\nabla_\| v(x,t)}}\,tdtdx =: I+II+III.
\end{multline}

\noindent  For any fixed $\eps>0$ the convergence of integrals and integration by parts above and through the argument is justified by our assumptions on $v$  (in particular, the properties of $v$ and $\partial_tv$ outlined in the paragraph above \eqref{eq3.9}), and off-diagonal estimates for $\p_t$ and its derivatives.

Then 
\begin{multline}\label{eq4.14}
|I|=\left| \iint_{\reu} L_\| \p_t   f_\eps (x) {\overline{\partial_t^2v(x,t)}}\,tdtdx\right|\\[4pt]
\lesssim \left\|\A(t\sqrt{L_\|} \p_t (\sqrt {L_\|} f_\eps ))\right\|_{L^p} \, \left\|\A(t\partial_t^2v)\right\|_{L^{p'}} \\[4pt]
\lesssim \left\|\A(t\sqrt{L_\|} \p_t (\p_\eps  \sqrt {L_\|} f))\right\|_{L^p} \, \left\|\A(t\partial_t^2v)\right\|_{L^{p'}}\\[4pt]
\lesssim \|\p_\eps  \sqrt {L_\|} f)\|_{L^p} \, \left\|\A(t\partial_t^2v)\right\|_{L^{p'}} \lesssim  \|\nabla_\| f\|_{L^p}\, \left\|\A(t\partial_t^2v)\right\|_{L^{p'}},
\end{multline}

\noindent using boundedness of the square function based on $t\sqrt{L_\|} \p_t $ \eqref{eq4.3}, uniform in $\eps$ bounds on $\p_\eps$ in $L^p$, and then the Kato estimate \eqref{eq4.2}, all available for $1<p<\infty$. Largely by the same argument, 
\begin{multline}\label{eq4.15}
|II|
\lesssim \left\|\A(t^2\nabla_\|\sqrt{L_\|} \p_t (\sqrt {L_\|} f_\eps ))\right\|_{L^p} \, \left\|\A(t\nabla_\|\partial_tv)\right\|_{L^{p'}} \\[4pt]\lesssim  \|\nabla_\| f\|_{L^p}\, \left\|\A(t\nabla_\|\partial_tv)\right\|_{L^{p'}}.
\end{multline} 

The estimate on $III$ is more delicate. We write 
\begin{multline}\label{eq4.16}
III= \iint_{\reu} \partial_t^2\p_t   f_\eps (x)\,{\overline{L^* v(x,t)}}\,tdtdx \\[4pt]
+ \sum_{j=1}^{n} \iint_{\reu} \partial_t^2\p_t   f_\eps (x)\,{\overline{\partial_j A_{j,n+1}^* \partial_tv(x,t)}}\,tdtdx
\\[4pt]
+ \sum_{j=1}^{n+1} \iint_{\reu} \partial_t^2\p_t   f_\eps (x)\, {\overline{A_{n+1,j}^* \partial_j\partial_tv(x,t)}}\,tdtdx=:III_1+III_2+III_3.
\end{multline} 

\noindent The term $III_1$ is left alone for the moment.  It is zero when $v$ is a solution  (note that $t\partial_t^2\p_t   f_\eps=t\partial_t^2\p_t   \p_\eps f \in \ring W^{1,2}(\reu)$ for any fixed $\eps>0$) and otherwise, it shows up explicitly on the right hand side of \eqref{eq4.12}. 
The term $III_3$ can be handled just as $I$ and $II$ above and gives the bound 
\begin{equation}\label{eq4.17}
III_3\lesssim  \|\nabla_\| f\|_{L^p}\, \left\|\A(t\nabla\partial_tv)\right\|_{L^{p'}},
\end{equation}

\noindent once again using \eqref{eq4.3} and \eqref{eq4.2}.

As for $III_2$, 
\begin{multline}\label{eq4.18}
III_2=C \sum_{j=1}^{n} \iint_{\reu} \partial_j\partial_t^2\p_{t}   f_\eps(x)\,{\overline{A_{j,n+1}^* (I-P_t)\partial_tv(x,t)}}\,tdtdx\\[4pt] \quad +C \sum_{j=1}^{n} \iint_{\reu} \partial_t^2\p_{t}   f_\eps(x)\,{\overline{\left(\partial_j A_{j,n+1}^* P_t\partial_tv(x,t)\right)}}\,tdtdx =: III_{2,1}+III_{2,2},
\end{multline} 

\noindent where $P_t$ is a nice approximation of identity, e.g., the heat semigroup of the Laplacian. Since $R_t:=I-P_t$ kills constants, and, by virtue of standard heat kernel bounds, satisfies off-diagonal decay estimates,  one can use the Poincar\'e inequality to produce the gradient and get an estimate on $III_{2,1}$ akin to that for $I+II$. Indeed, according to Lemma~\ref{l4.4}, square function estimates \eqref{eq4.3} and Kato estimate \eqref{eq4.2}, 
$$ III_{2,1} \lesssim \|\p_\eps \sqrt{L_\|} f\|_{L^p} \, \left\|\A((I-P_t)\partial_tv)\right\|_{L^{p'}}\lesssim  \|\nabla_\| f\|_{L^p}\, \left\|\A(t\nabla\partial_tv)\right\|_{L^{p'}}.$$

 Concerning $III_{2,2}$, we further write
\begin{multline}\label{eq4.19}
III_{2,2}\\[4pt]=C \sum_{j=1}^{n} \iint_{\reu} \left(4t^2L_\|^2e^{-t^2L_\|}-2L_\|e^{-t^2L_\|}\right)   f_\eps(x)\,{\overline{\left(\partial_j A_{j,n+1}^* P_t\partial_tv(x,t)\right)}}\,tdtdx\\[4pt]= \sum_{j=1}^{n} \iint_{\reu} \left(C_1t^2L_\|^2e^{-\frac 34 t^2L_\|}+C_2L_\|e^{-\frac 34 t^2L_\|}\right)   f_\eps(x)\,{\overline{\p^*_{t/2}\left(\partial_j A_{j,n+1}^* P_t\partial_tv(x,t)\right)}}\,tdtdx\\[4pt]=  \sum_{j=1}^{n} \iint_{\reu} \left(C_1t^2L_\|^2e^{-\frac 34 t^2L_\|}+C_2L_\|e^{-\frac 34 t^2L_\|}\right)   f_\eps (x)\times\,\\[4pt]\qquad\qquad\qquad\qquad \times {\overline{\left(\p^*_{t/2}\partial_j A_{j,n+1}^* P_t\partial_tv(x,t)-(\p^*_{t/2}\partial_j A_{j,n+1}^*) (P_t\partial_tv(x,t))\right)}}\,tdtdx\\[4pt]
+  \sum_{j=1}^{n} \iint_{\reu} \left(C_1t^2L_\|^2e^{-\frac 34 t^2L_\|}+C_2L_\|e^{-\frac 34 t^2L_\|}\right)  f_\eps (x)\,\times\,\\[4pt]\qquad \times{\overline{(\p^*_{t/2}\partial_j A_{j,n+1}^*) \,(P_t\partial_tv(x,t))}}\,tdtdx=III'_{2,2}+III''_{2,2}, 
\end{multline} 

\noindent where $(\p^*_{t/2}\partial_j A_{j,n+1}^*)\,(P_t\partial_tv(x,t))$ is interpreted as a product of two functions, while $\p^*_{t/2}\partial_j A_{j,n+1}^* P_t\partial_tv(x,t)$, as before, is a result of an operator $\p^*_{t/2}\partial_j A_{j,n+1}^* P_t$ acting on the function $\partial_tv$.

Much as in the analysis of $III_{2,1}$, we note that the operator 
$$ R_t:=t\left(\p^*_{t/2}\partial_j A_{j,n+1}^* P_t-(\p^*_{t/2}\partial_j A_{j,n+1}^*) P_t\right)$$
\noindent kills constants and satisfies $L^2-L^2$ off-diagonal estimates (see, e.g., \cite{AHLMcT}), so that by Lemma~\ref{l4.4} one can bound $III'_{2,2}$ similarly to $III_{2,1}$. Finally, 
\begin{multline}\label{eq4.20}
III''_{2,2}\lesssim \sum_{j=1}^{n} \left\|\C(t\,\p^*_{t/2}\partial_j A_{j,n+1}^*)\right\|_{L^\infty}\,\times\,\\[4pt]\qquad \times\left\|\A\left(\left(C_1t^3L^2e^{-\frac 34 t^2L}+C_2tLe^{-\frac 34 t^2L}\right)   f_\eps\right)\right\|_{L^p}\,\left\|N_*(P_t\partial_tv)\right\|_{L^{p'}}.
\end{multline} 

\noindent The latter estimate follows from the duality of tent spaces $T^p_2$ and $T^{p'}_2$ and the fact that $T_2^\infty \cdot T^{p'}_{\infty} \hookrightarrow T_2^{p'}$ (see \cite{CMS} for the case $p'>2$ and \cite{HMM} or \cite{CV}, p. 313 for all $0<p'<\infty$).

 For each $j$, the estimate on the Carleson measure of 
$\mathcal{C}(t\,\p^*_{t/2}\partial_j A_{j,n+1}^*)$ is  controlled, since the 
Kato problem for $L_\|^*$  is equivalent to the statement that
$$\|\mathcal{C}(t\p^*_{t/2}\nabla_\|\cdot \vec{b})\|_{L^\infty(\mathbb{R}^n)} \leq
C \|\vec{b}\|_{L^\infty(\mathbb{R}^n)},$$ 
for every $\vec{b}\in L^\infty(\mathbb{R}^n, \mathbb{C}^n)$
\cite{AHLMcT}. Hence, 
\begin{equation}\label{eq4.21}
III''_{2,2}\lesssim \left\|\nabla_\|f\right\|_{L^p}\,\left\|N_*(\partial_tv)\right\|_{L^{p'}}.
\end{equation} 

\noindent Indeed, recall that $P_t$ is a ``nice" approximation of identity, e.g., the heat semigroup of the Laplacian and hence, one can drop it inside the non-tangential maximal function using the fact that $N_*(P_t (F(\cdot, t))\leq M(N_* F)$ with $M$ denoting the Hardy-Littlewood maximal function. \ep

This finishes the proof of Theorem~\ref{t3.1}.

\section{Boundedness of layer potentials and estimates for  solutions in $\reu$.}

Let us start by recalling a few known estimates on the layer potentials and related operators that will be used throughout this section. 

\begin{proposition}\label{p5.1} Suppose that $L$ is $t$-independent, that $L$
and its adjoint
$L^*$ satisfy the De Giorgi-Nash-Moser bound \eqref{eq1.6}. Then the single layer potential satisfies the following estimates  in $L^2$: 
\begin{equation} \int_{-\infty}^{\infty}\int_{\mathbb{R}^{n}}\left|t\partial_t\nabla S^L_{t}f(x)\right|^{2}\frac{dx dt}{|t|}\,\leq\, C\,\Vert
f\Vert_{L^2(\rn)}^{2},\label{eq5.2}\end{equation}
\begin{equation}\sup_{t\neq0}\left( \|\left(S^{L^*}_t\nabla\right)f\|_{L^2(\rn)}\,
+\, \|\nabla S^{L}_t f\|_{L^2(\rn)}
\right)
\lesssim \, \|f\|_{L^2(\rn)},\label{eq5.3}\end{equation}
\begin{equation}\label{eq5.4}
\|\N(\nabla S^L_{\pm t}f)\|_{L^2(\rn)}\lesssim\|f\|_{L^2(\rn)}\,.
\end{equation}  
\noindent Moreover, there exists $\eps >0$ such that for all $1<p<2+\eps$ we have 
\begin{equation}\label{eq5.5}
\|\N(\nabla S^L_{\pm t}f)\|_{L^p(\rn)}\lesssim\|f\|_{L^p(\rn)}\,,
\end{equation}
and for the dual exponent $p'$,
\begin{eqnarray}\label{eq5.16}
&&\|N_*((S^{L^*}_{\pm t} \nabla)f)\|_{L^{p'}(\rn)}\lesssim\|f\|_{L^{p'}(\rn)},\\
\label{eq5.16-bis} &&\|\A(t\nabla S_{\pm t}^{L^*}\nabla f)\|_{L^{p'}}\lesssim
\|f\|_{L^{p'}(\rn)}\,. 
\end{eqnarray} 
\noindent
Analogous bounds hold for $L^*$. The implicit constants in all inequalities and $\eps>0$ depend upon   the standard constants  only. 
\end{proposition}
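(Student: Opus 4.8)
\textbf{Proof proposal for Proposition~\ref{p5.1}.}

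The plan is to reduce everything to three inputs: (i) the $L^2$ square-function and non-tangential estimates for the single layer potential, which are already known from \cite{AAAHK} under the De Giorgi–Nash–Moser hypothesis; (ii) the fact that $\partial_t S_t^L$ can be related, via the $t$-independence \eqref{eq1.9} (so that $\partial_t E(x,t;y,0) = -\partial_s E(x,t;y,s)\big|_{s=0}$) and integration by parts against the equation, to a Calderón–Zygmund-type operator to which $L^p$ extrapolation applies; and (iii) a duality/adjoint bookkeeping to pass from estimates on $S_t^L$ to estimates on $S_t^{L^*}\nabla$. Concretely, \eqref{eq5.2}, \eqref{eq5.3}, \eqref{eq5.4} are quoted directly from \cite{AAAHK} (Sections 4–5 there), so the content of the Proposition is the $L^p$ range \eqref{eq5.5}–\eqref{eq5.16-bis}.

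First I would establish \eqref{eq5.5}. The operator $f \mapsto \widetilde N(\nabla S_t^L f)$ is $L^2$-bounded by \eqref{eq5.4}. To run extrapolation downward (and a bit above $2$), I would verify that the truncated/smoothed versions of $\nabla S_t^L$ satisfy $L^2$ off-diagonal (Gaffney–Davies) estimates — this follows from the Caccioppoli inequality \eqref{eq2.1}, the De Giorgi–Nash–Moser bound \eqref{eq1.6}–\eqref{eq1.7}, and the decay of the fundamental solution, exactly as in \cite{AAAHK} and \cite{HMM} — and then invoke the good-$\lambda$/sparse-domination or the abstract extrapolation machinery for non-tangential maximal functions (as in \cite{HMiMo}, \cite{HMM}) to obtain boundedness on $L^p$ for $p$ in an open interval around $2$, say $1 < p < 2+\eps$ with $\eps$ depending only on the standard constants. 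The half-space with a minus sign (i.e. $\nabla S_{-t}^L$) is handled identically, using the analogous estimates in $\RR^{n+1}_-$.

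Next, \eqref{eq5.16} and \eqref{eq5.16-bis}: here I would use duality. The operator $S_t^{L^*}\nabla$ is, up to the $t$-independence identity \eqref{eq1.9} and an integration by parts, the adjoint (in the appropriate pairing of functions on $\rn$ with functions on $\reu$) of an operator built from $\nabla S_t^L$; thus the $L^{p'}$ bound on $N_*((S_t^{L^*}\nabla)f)$ in \eqref{eq5.16} is dual to a tent-space/area-function bound on $\nabla S_t^L$ in $L^p$, which in turn follows from \eqref{eq5.2} (the $L^2$ case) plus the same extrapolation as above applied to the square function $\A(t\partial_t\nabla S_t^L\cdot)$; the endpoint non-tangential-to-area passage uses the standard Coifman–Meyer–Stein tent space theory \cite{CMS}. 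The estimate \eqref{eq5.16-bis} for $\A(t\nabla S_t^{L^*}\nabla)$ is the genuinely second-order object: I would obtain its $L^2$ version from \eqref{eq5.2} together with the $L^2$ boundedness of $S_t^{L^*}\nabla$ (the ``dual single layer''), then extrapolate in $L^{p'}$ using tent-space interpolation between the $L^2$ bound and an $L^\infty \to BMO$ (Carleson-measure) bound obtained via the Kato technology as in the proof of Theorem~\ref{t4.10} above — indeed the Carleson estimate $\|\C(t\p_{t/2}^*\nabla_\|\cdot\vec b)\|_\infty \lesssim \|\vec b\|_\infty$ recalled there is precisely the kind of input one needs. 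Throughout, the $t$-independence \eqref{eq1.9} is what lets me move a $\partial_t$ onto a $\partial_s$ and then integrate by parts against $L^* E^* = \delta$, turning apparently-singular derivatives into the square functions already controlled.

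The main obstacle I expect is the second-derivative square function \eqref{eq5.16-bis}: unlike the other estimates, it involves $\nabla S_t^{L^*}\nabla$, for which there is no direct Caccioppoli control, and the $L^p$ extrapolation for $p' < 2$ (i.e. the ``hard'' side away from $L^2$) cannot be obtained by a naive good-$\lambda$ argument because $\nabla S_t^{L^*}\nabla$ need not map $L^\infty$ to anything reasonable pointwise. The resolution — and the technically delicate step — is to split $\nabla S_t^{L^*}\nabla = (\nabla S_t^{L^*}\nabla - (\nabla S_t^{L^*}\vec 1)\cdot(\text{avg}))$ plus a paraproduct-type remainder, exactly the $(I - P_t)$ decomposition used in \eqref{eq4.18}–\eqref{eq4.19}, so that the main piece kills constants and Lemma~\ref{l4.4} applies, while the remainder is a Carleson-measure term controlled by the Kato estimate. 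Assembling these pieces carefully, and checking that the resulting $\eps > 0$ is uniform (depends only on the standard constants and not on $L$ beyond those), is where the real work lies; the rest is bookkeeping with \eqref{eq1.9} and the tent-space duality $T_2^\infty \cdot T_\infty^{p'} \hookrightarrow T_2^{p'}$.
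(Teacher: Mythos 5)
Your starting point is where the trouble is. You claim that \eqref{eq5.2}--\eqref{eq5.4} ``are quoted directly from \cite{AAAHK} (Sections 4--5 there)'' under only the De Giorgi--Nash--Moser hypothesis. That is not correct, and the paper itself is careful about this distinction: in \cite{AAAHK} the square function estimate \eqref{eq5.2} is not derived from DGNM alone --- it enters as an additional hypothesis (verified there only for special classes such as real symmetric coefficients, block form, constant coefficients, and small perturbations of these). The unconditional statement --- that $t$-independence plus DGNM for $L,L^*$ already gives \eqref{eq5.2} --- is a theorem of Ros\'en \cite{R}, proved via the holomorphic functional calculus of first-order $DB$-type operators, with an alternative proof in \cite{GH} by a local $Tb$ theorem for square functions. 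These are fundamentally different tools from anything in \cite{AAAHK}, so this is not a detail you could recover by pushing harder on the \cite{AAAHK} machinery. What \cite{AAAHK} (together with \cite{AA}) does supply, and what the paper actually cites it for, is the implication \eqref{eq5.2} $\Rightarrow$ \eqref{eq5.3}, \eqref{eq5.4}. Since \eqref{eq5.2} is the linchpin for everything downstream in your argument, this mis-attribution is a genuine gap.

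Beyond that, your outline of the $L^p$ range is consistent in spirit with what the cited sources do --- the paper itself simply quotes \cite[Thm.\ 1.1, est.\ (4.45)]{HMiMo} for \eqref{eq5.5} and \eqref{eq5.16}, and \cite{HMM} for \eqref{eq5.16-bis}, rather than re-deriving them. Your sketch for \eqref{eq5.5} (off-diagonal/Caccioppoli plus good-$\lambda$ extrapolation around $p=2$) and for \eqref{eq5.16} (duality with a tent-space estimate on $\nabla S^L_t$) matches the methodology of \cite{HMiMo}. Your proposed treatment of \eqref{eq5.16-bis}, via an $(I-P_t)$-type decomposition into a piece killing constants (so Lemma~\ref{l4.4} applies) plus a Carleson remainder controlled by the Kato estimate, is a plausible strategy but remains a sketch; the paper avoids committing to these details by citing \cite{HMM}. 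If you intend to reconstruct the proof rather than cite, \eqref{eq5.16-bis} is indeed where the care would be required, but the more pressing fix is to replace the \cite{AAAHK} citation for \eqref{eq5.2} with \cite{R} (or \cite{GH}).
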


Here and below 
\begin{align}\label{eq5.6}
\left( S_t D_j\right)f(x)&:= \int_{\rn}\frac{\partial}{\partial y_j} E(x,t;y,0)\,f(y)\,dy\,,\quad 
1\leq j\leq n,\\[4pt]\label{eq5.7}
\left( S_t D_{n+1}\right)f(x)&:= \int_{\rn}\frac{\partial}{\partial s} E(x,t;y,s)\big|_{s=0}\,f(y)\,dy\,,
\end{align}
and we set 
\begin{equation}\label{eq5.8}\left( S_t\nabla\right):=\Big(\left( S_t D_1\right),\left( S_t D_2\right),...,
\left( S_t D_{n+1}\right)\Big)\,,\,\,\,{\rm and} \,\, \left( S_t\nabla\right)\cdot \vec{f}:=\sum_{j=1}^{n+1}
\left( S_t D_j\right)f_j\,,
\end{equation}
where $\vec{f}$ takes values in $\mathbb{C}^{n+1}$. Similarly, 
\begin{equation}\label{eq5.9}\left( S_t\nabla_\|\right):=\Big(\left( S_t D_1\right),\left( S_t D_2\right),...,
\left( S_t D_{n}\right)\Big)\,,\,\,\,{\rm and} \,\, \left( S_t\nabla_\|\right)\cdot \vec{f}:=\sum_{j=1}^{n}
\left( S_t D_j\right)f_j\,,
\end{equation}
where $\vec{f}$ takes values in $\mathbb{C}^{n}$.

Note that for $t$-independent operators, we have by translation invariance in $t$ that
$\left( S_t D_{n+1}\right)=\,-\,\partial_t S_t$.  

 \begin{remark}\label{r5.11}
 We remark that, considering only the tangential gradient
$\nabla_\|$ in \eqref{eq5.16}, the latter may be re-formulated as
\begin{equation}\label{eq5.15}
 S^{L^*}_{\pm t}: L^{p'}_{-1} \to T^{p'}_{\infty}(\reu).
 \end{equation}
 We further note that \eqref{eq5.5}, \eqref{eq5.16}, $t$-independence of $L$, and the De Giorgi-Nash-Moser bounds immediately imply that 
\begin{equation}
\label{eq5.18}  \partial_t S_t^L: L^p(\RR^n)\to T^p_\infty(\reu), \quad 1<p<\infty,
\end{equation}
under the assumptions of Proposition~\ref{p5.1}.
\end{remark}

\noindent {\it Proof of Proposition~\ref{p5.1}.} The square function 
bound \eqref{eq5.2} was proved in \cite{R}  (for an alternative proof, 
see  \cite{GH}).  
The fact that \eqref{eq5.2} 
implies \eqref{eq5.3} and \eqref{eq5.4} is basically a combination of results in \cite{AAAHK} and \cite{AA}. See Proposition~1.19 in \cite{HMM} for a detailed discussion and references. The fact that \eqref{eq5.3} for $L$ and $L^*$ implies \eqref{eq5.5}  and  \eqref{eq5.16}
has been proved 
 in \cite[Theorem 1.1 and estimate (4.45)]{HMiMo}. Finally, \eqref{eq5.16-bis} can be found in \cite{HMM}.\ep

\begin{proposition}\label{p5.10} Suppose that $L$ is $t$-independent, that $L$
and its adjoint
$L^*$ satisfy the De Giorgi-Nash-Moser bound \eqref{eq1.6}. Then 
\begin{eqnarray}
\label{eq5.11} 
&& \nabla L^{-1}\dv: T^p_2 \to \widetilde{T}^p_\infty, \quad 1<p< 2+\eps, \\[4pt]
\label{eq5.12}
&& \nabla L^{-1} \textstyle{\frac 1t}: T^p_1\to \widetilde{T}^p_\infty, \quad 1<p< 2+\eps,
\end{eqnarray}
\noindent for some $\eps>0$, depending on the standard constants only. Here the operator  $ L^{-1} \textstyle{\frac 1t}$ is to be interpreted via
\begin{equation}\label{eq5.13} \left(L^{-1}\textstyle{\frac 1t} \Psi\right) (y,s):=\iint_{\reu} E(y,s; x,t) \,\Psi(x,t)\,\frac{dxdt}{t}, \quad (y,s)\in \reu. \end{equation}
\end{proposition}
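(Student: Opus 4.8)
The plan is to reduce \eqref{eq5.11} and \eqref{eq5.12} to the single layer potential bounds of Proposition~\ref{p5.1} by expressing these ``interior'' operators as vertical superpositions of layer potentials, proving the $L^2$ case directly, and extrapolating to $1<p<2+\eps$. By density of, say, $C_0^\infty(\reu)$ in $T^p_q$ it suffices to treat smooth compactly supported $\vec F$, $\Psi$, which also justifies the manipulations below. Using $t$-independence of $L$ together with \eqref{eq1.9}, an integration by parts in the source variable $(x,t)$ yields
\begin{gather*}
\nabla\big(L^{-1}\dv\vec F\big)(y,s)=-\int_0^\infty \big(\nabla S^L_{s-t}\nabla\big)\cdot\vec F(\cdot,t)(y)\,dt,\\
\nabla\big(L^{-1}\tfrac{1}{t}\Psi\big)(y,s)=\int_0^\infty \big(\nabla S^L_{s-t}\big)\Psi(\cdot,t)(y)\,\frac{dt}{t},
\end{gather*}
where $S^L_\tau$, $\nabla S^L_\tau$, $\nabla S^L_\tau\nabla$ are understood for both signs of $\tau$ as in Proposition~\ref{p5.1} and the note following \eqref{eq5.9}. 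Thus \eqref{eq5.11} amounts to a bound for the vertical operator with kernel $\tau\mapsto\nabla S^L_\tau\nabla$ (integrated $dt$), and \eqref{eq5.12} to one for $\tau\mapsto\nabla S^L_\tau$ (integrated $dt/t$); alternatively \eqref{eq5.12} can be deduced from \eqref{eq5.11} by writing $\tfrac1t\Psi=\dv(0,\dots,0,\Phi)$ with $\Phi(x,t):=-\int_t^\infty\Psi(x,\sigma)\,\sigma^{-1}\,d\sigma$ and observing that $\|\A_2\Phi\|_{L^p}\lesssim\|\A_1\Psi\|_{L^p}$ by a tent-space Hardy inequality.

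For the $L^2$ estimate I would split the $t$-integral into the near-diagonal range $s/2<t<2s$ and the far ranges $0<t<s/2$ and $t>2s$. On the far ranges, Caccioppoli together with the De~Giorgi--Nash--Moser bounds for $L$ and $L^*$ shows that $\nabla_x\nabla_y E(y,\tau;x,0)$ enjoys $L^2$ off-diagonal decay of order $|\tau|^{-n-1}$ once $|\tau|\gtrsim|y-x|$, so that part is an operator with an integrable vertical kernel and is disposed of at once. On the near-diagonal range I would use the uniform $L^2$ bounds \eqref{eq5.3}, the square function estimate \eqref{eq5.2} and its dual form \eqref{eq5.16-bis}, and a quasi-orthogonality (Schur/Cotlar) argument, to control the resulting $L^2(\reu,\tfrac{dx\,dt}{t})$ norms. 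Finally, to pass from these square-function-type bounds to the modified non-tangential maximal function $\N$ on the output, I would apply Caccioppoli on Whitney boxes to trade $\nabla u$ locally for $u/t$, and then the non-tangential estimate \eqref{eq5.4}. This reproduces the $p=2$ mechanism of \cite{AAAHK} and \cite{HMM}, which I would follow closely.

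To reach the full range $1<p<2+\eps$ I would extrapolate. Since $\p_t=e^{-t^2L_\|}$ and $\p_t^*$ have pointwise Gaussian bounds and the layer potentials $\nabla S^L_\tau$, $\nabla S^L_\tau\nabla$ satisfy the $L^p$--$L^2$ off-diagonal estimates available in the range furnished by Proposition~\ref{p5.1} (estimates \eqref{eq5.5}, \eqref{eq5.16}, \eqref{eq5.16-bis}), the $p=2$ bound upgrades to $1<p<2+\eps$ via the tent-space extrapolation and interpolation technology of \cite{AAAHK}, \cite{HMiMo}, \cite{HMM}; the upper endpoint $2+\eps$ is precisely the one inherited from those estimates.

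I expect the main obstacle to be the $L^2$ step itself, because the output is measured in $\widetilde{T}^2_\infty$ rather than in a square-function space, and $\widetilde{T}^2_\infty\neq T^2_2$: one cannot simply bound a single operator norm on $L^2(\reu,\tfrac{dx\,dt}{t})$, but must marry the genuinely local single-layer bounds with interior regularity while carefully controlling the non-locality introduced both by the dyadic-in-scale splitting and by the tent-space / Carleson norms. The same care then has to be exercised when propagating the $\N$-structure through the extrapolation step.
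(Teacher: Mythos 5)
The paper does not prove Proposition~\ref{p5.10}: the entire proof given in the text is the sentence ``The Proposition was proved in \cite{HMM}.'' There is therefore no in-text argument to compare against, and the review can only assess whether your sketch is sound on its own terms.

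Your high-level strategy --- unfold $\nabla L^{-1}\dv$ and $\nabla L^{-1}\tfrac1t$ as vertical superpositions of traveling layer potentials using $t$-independence and \eqref{eq1.9}, invoke the single-layer bounds of Proposition~\ref{p5.1}, and extrapolate in $p$ --- is the right one and is consistent with what the cited reference does; your integration-by-parts formulas also check out. But the $p=2$ step as written contains a genuine gap, precisely at the point you yourself flag. A Schur/Cotlar argument would yield a $T^2_2\to T^2_2$ estimate, and you then propose to use Caccioppoli on Whitney boxes to trade $\nabla u$ for $u/t$ and apply \eqref{eq5.4} to upgrade to $\widetilde T^2_\infty$. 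That does not work as stated: Caccioppoli requires $u$ to be a null solution of $L$ on the enlarged Whitney box, which is exactly what fails in the near-diagonal regime where the source $\vec F$ (resp.\ $\Psi$) lives on the box, and $u=L^{-1}\dv\vec F$ is not the layer potential of a fixed boundary datum, so \eqref{eq5.4} is not directly applicable. The missing step is a per-Whitney-box localization: split the source into a part supported near the given box (controlled by the $T^2_2\to T^2_2$ bound combined with the trivial Whitney-averaged embedding) and a part supported away from it (where $u$ \emph{is} a null solution and Caccioppoli, interior regularity, and the layer-potential estimates do apply). That localization is what actually produces $\widetilde T^p_\infty$ control rather than $T^p_2$ control and is the heart of the proof; you correctly identify it as ``the main obstacle'' but do not supply the idea that resolves it. A smaller imprecision: on the far $t$-ranges the kernel decay $|s-t|^{-n-1}$ is not an integrable vertical kernel in any global sense; what one actually exploits is the cone restriction $|y-x|\lesssim|s-t|$ together with the tent-space geometry, so that piece is not ``disposed of at once'' either.
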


\bp The Proposition was proved in \cite {HMM}.\ep





\begin{proposition}\label{p5.19} Suppose that $L$ is $t$-independent, that $L$
and its adjoint
$L^*$ satisfy the De Giorgi-Nash-Moser bound \eqref{eq1.6}. Then there exists $\eps >0$ such that for all $1<p<2+\eps$,  we have
\begin{equation}\label{eq5.20}
\nabla D^L_{\pm t}: \dot L^{p}_{1} \to T^{p}_{\infty}(\reu).
 \end{equation}
\noindent In particular,
\begin{equation}\label{eq5.21}
\|\widetilde N(\nabla D^L_{\pm t} f)\|_{L^{p}(\rn)}\leq C \|\nabla_\| f\|_{L^{p}(\rn)}.
\end{equation}
\noindent
 Here $C>0$ and $\eps>0$  depend upon the standard constants only. Analogous bounds hold for $L$.
\end{proposition}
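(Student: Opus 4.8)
The plan is to relate the double layer potential $D_t^L$ to the single layer potential via the identity $D_t^L = \partial_{\nu_{A^*}}$-type operator applied to $E^*$, and ultimately to reduce the gradient estimate for $D_t^L$ to the already-established bounds in Propositions~\ref{p5.1} and~\ref{p5.10}. More precisely, I would first recall (from the definition \eqref{eq1.10}) that $D_t^L f(x) = \int_{\rn} \overline{\partial_{\nu_{A^*},y} E^*(y,0;x,t)}\, f(y)\,dy$, and that for $t$-independent coefficients the fundamental solution satisfies the translation property \eqref{eq1.9}, so that $s$-derivatives and $t$-derivatives of $E$ can be interchanged. The key structural observation is that $\nabla_{x,t} D_t^L f$ can be written, modulo the tangential-versus-full-gradient bookkeeping, in terms of $(S_t^{L}\nabla_\|)$ acting on $\nabla_\| f$ plus a term involving $(S_t^L D_{n+1})$ composed with a conormal-type derivative of $f$; the presence of $\nabla_\| f$ on the data side (rather than $f$ itself) is exactly what matches the $\dot L^p_1$ norm on the right-hand side of \eqref{eq5.20}.

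Concretely, the steps in order would be: (1) integrate by parts in $y$ on the boundary to move the conormal derivative off $E^*$ and onto $f$, using the weak definition of $\partial_{\nu_{A^*}}$; this expresses $D_t^L f$ (up to harmless terms) as a sum of operators of the form $(S_t^L D_j)$ applied to first-order derivatives of $f$ — a computation of the type already appearing in Section~3 around \eqref{eq3.4}--\eqref{eq3.6}. (2) Apply $\nabla_{x,t}$ and use Proposition~\ref{p5.10}, specifically \eqref{eq5.11} with the representation $\nabla L^{-1}\dv: T^p_2 \to \widetilde T^p_\infty$, together with the single-layer bounds \eqref{eq5.5} and the mapping $S_t^L \nabla_\|$ analysis implicit in Proposition~\ref{p5.1}, to control $\widetilde N(\nabla D_t^L f)$ by $\|\nabla_\| f\|_{L^p}$. (3) Invoke the De Giorgi-Nash-Moser bounds \eqref{eq1.7} to pass from the $L^2$-averaged quantity defining $\widetilde N$ to the pointwise gradient where convenient, and to justify that the off-diagonal/tent-space estimates assemble into the stated $T^p_\infty$ bound \eqref{eq5.20}. (4) Observe that \eqref{eq5.21} is just the $L^p$-norm restatement of \eqref{eq5.20} by the definition of $\widetilde N$ and the tent space $T^p_\infty$ (or $\widetilde T^p_\infty$), so nothing further is needed once \eqref{eq5.20} is in hand. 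Finally, the analogous bounds for $L$ (as opposed to $L^*$) follow by symmetry, swapping the roles of $L$ and $L^*$ throughout, since both satisfy the De Giorgi-Nash-Moser hypothesis.

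The main obstacle I anticipate is step (1): carefully justifying the integration by parts that converts the conormal derivative of $E^*$ into tangential derivatives of $f$, because the conormal derivative is only defined in a weak/variational sense (as discussed around Lemma~\ref{l2.11} and \eqref{eq2.10}), and because one must track the non-tangential behavior uniformly in $t$ near the boundary and control the ``extra'' term involving $D_{n+1} = -\partial_t$ composed with a conormal derivative of $f$. This last term is delicate precisely because $\partial_t S_t^L$ maps $L^p \to T^p_\infty$ (see \eqref{eq5.18}), not to $\widetilde T^p_\infty$ with a gradient, so one needs the De Giorgi-Nash-Moser estimate \eqref{eq1.6}-\eqref{eq1.7} to upgrade and to absorb it correctly. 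Once the representation is established cleanly, the remaining estimates are essentially a bookkeeping assembly of the already-proven Propositions~\ref{p5.1} and~\ref{p5.10}.
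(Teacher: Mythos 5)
Your step (1) identifies the right general direction (moving the conormal derivative off $E^*$ and onto $f$), but it is not quite correct as stated. The conormal derivative $\partial_{\nu_{A^*},y}E^*(y,s;x,t)\big|_{s=0}$ has an irreducible transversal component; one cannot integrate it by parts in $y$ on $\rn$ alone. What the paper actually does in \eqref{eq5.24} is first take $\partial_t D^L_t$, swap $\partial_t \leftrightarrow -\partial_s$ using $t$-independence, and then invoke the interior equation $L^*_{y,s}E^*(y,0;x,t)=0$ to trade the resulting double transversal derivative $\partial_s\bigl(e_{n+1}A^*\nabla_{y,s}E^*\bigr)$ for a tangential divergence $-\sum_j\partial_j\bigl(e_jA^*\nabla_{y,s}E^*\bigr)$, which is then integrated by parts onto $f$. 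Without the initial $\partial_t$ and the use of the equation, no such representation exists, and one is left with a genuinely transversal term on the kernel side. Correspondingly, the full gradient $\nabla D^L_t$ is not attacked directly: the paper first reduces, via the Kenig--Pipher device (cited to \cite{KP}, p.\ 494), to bounding $N_*(\partial_t D^L_t f)$ and the boundary operator $\nabla_\|D^L_t\big|_{t=0}$ separately. You do not mention this reduction, and without it ``applying $\nabla_{x,t}$'' to the representation produces second-order singular integrals that are not controlled by anything in Propositions~\ref{p5.1}, \ref{p5.10}.

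The more serious gap is in step (2). After the correct rewriting, the leading term of $\partial_tD^L_tf$ is an operator of the form $\bigl(S^L_t\nabla_\|\bigr)\cdot(A_\|^T\nabla_\|f)$, and you propose to close the estimate using ``the $S^L_t\nabla_\|$ analysis implicit in Proposition~\ref{p5.1}.'' But the bound $\|N_*\bigl((S^L_{\pm t}\nabla)g\bigr)\|_{L^q}\lesssim\|g\|_{L^q}$ supplied by \eqref{eq5.16} (and its $L/L^*$-swapped analogue) holds only for $q$ in the \emph{dual} range $q=p'>(2+\eps)'$; it is \emph{not} available for $q=p$ close to $1$, which is exactly the range \eqref{eq5.20} asserts. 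That is precisely why the paper does not estimate this term directly. Instead it realizes $\widetilde T^p_\infty$ as the dual of $\widetilde T^{p'}_1$, pairs the first term in \eqref{eq5.24} against $\Phi\in\widetilde T^{p'}_1$ to produce $\int_\rn\overline{A^*_\|\nabla_\|(T\Phi)(\cdot,0)}\cdot\nabla_\|f$ with $T=(L^*)^{-1}\frac1t$, and then invokes the \emph{main Rellich-type estimate} in the general form \eqref{eq4.12} (Theorem~\ref{t4.10} with the extra error term $III_1$, since $T\Phi$ is not a solution). Closing this requires the mapping properties of Proposition~\ref{p5.10} applied to the adjoints $\partial_s L^{-1}\frac1t$ and $\partial_sL^{-1}\dv$, in the pre-dual formulation \eqref{eq5.28}--\eqref{eq5.29}, plus the bound \eqref{eq5.31} on $\widetilde N(t\partial_t^2\p_tf)$. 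This chain — duality with $\widetilde T^{p'}_1$, the generalized estimate \eqref{eq4.12}, the adjoint identification with the operators of Proposition~\ref{p5.10} — is the heart of the proof, and your proposal omits it entirely. A ``bookkeeping assembly'' of Propositions~\ref{p5.1} and~\ref{p5.10} in their stated forms cannot reach the conclusion, because the needed single-layer estimate simply fails to be in the stated toolkit for $p$ near $1$. The analogous duality argument is also needed a second time for the boundary piece $\nabla_\|D^L_t\big|_{t=0}:\dot L^p_1\to L^p$ (estimate \eqref{eq5.31.1}), which again invokes \eqref{eq3.9}.
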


\bp By the usual density considerations, it is enough to demonstrate \eqref{eq5.21} for $f\in C_0^\infty(\rn)$.  
To start, let us concentrate on the estimate for $\partial_t D_t^L$, that is,
 \begin{equation}\label{eq5.23}
\|\widetilde N(\partial_t D_t^L f)\|_{L^p(\rn)} \lesssim \|\nabla_\| f\|_{L^p(\RR^n)}.
\end{equation}

To this end, we note that $L^*_{y,s} E^*(y,0;x,t)=0$ for all $(x,t)\in\reu$, since $(y,0)$ is on the boundary.  Hence,  one can formally write 
\begin{multline}\label{eq5.24}
\partial_t\int_\rn {\overline{\partial_{\nu_{A^*},y} \,E^*(y,0;x,t)}}\, f(y)\,dy=\int_\rn {\overline{\partial_s e_{n+1}A^*(y)\,\nabla_{y,s}\,E^*(y,s;x,t)\Big|_{s=0}}}\, f(y)\,dy\\[4pt]
= -\int_\rn \sum_{j=1}^n {\overline{\partial_j e_{j}\,A^*(y)\,\nabla_{y,s}\,E^*(y,s;x,t)\Big|_{s=0}}}\, f(y)\,dy
\\[4pt]
= \int_\rn {\overline{L^*_{\|,y}\,E^*(y,0;x,t)}}\, f(y)\,dy - \int_\rn \sum_{j=1}^n {\overline{\partial_j \,A^*_{j,n+1}(y)\,\partial_s\,E^*(y,s; x,t)\Big|_{s=0}}}\, f(y)\,dy
\\[4pt]
= \int_\rn {\overline{A^*_\|(y)\nabla_y\,E^*(y,0;x,t)}}\, \nabla_y f(y)\,dy \,\\[4pt] +\,\int_\rn \sum_{j=1}^n {\overline{A^*_{j,n+1}(y)\,\partial_s\,E^*(y,s;x,t)\Big|_{s=0}}}\, \partial_j f(y)\,dy. 
\end{multline}

\noindent Of course, the formal computations in \eqref{eq5.24} should be interpreted in the weak sense, using, in particular, the weak definition of the normal derivative \eqref{eq2.10} (see the discussion in proof of Theorem~\ref{t5.32} for more details). 

The desired estimate on the second term on the right-hand side of \eqref{eq5.24} follows from \eqref{eq5.18}. Passing to the first term, recall that the tent space $\widetilde T^p_\infty$ can be realized as a space of the linear functionals on $\widetilde T^{p'}_1$, where the latter is defined as a collection of $F\in {\bf M}(\reu)$ such that  

$$\|F\|_{\widetilde T^{p'}_1}:=\left\|\C_1(W_2F)\right\|_{L^{p'}}<\infty, $$
\noindent  where
$$W_2 F(x,t)=\left(\fint\!\!\!\!\!\!\fint\limits_{W(x,  t)}|F(y,s)|^{2}dyds\right)^{\frac{1}{2}}, \quad (x,t)\in\reu. $$

\noindent Indeed, it was proved in \cite{HR}, Theorem 3.2, that  $\widetilde T^p_\infty=\left(\widetilde T^{p'}_1\right)^*$, $1<p<\infty$. Pairing the first term on the right-hand side of \eqref{eq5.24} with $\Phi$, one obtains
\begin{multline}\label{eq5.25}
\iint_{\reu }\int_\rn {\overline{A^*_\|(y)\nabla_y\,E^*(y,0;x,t)}}\, \nabla_y f(y)\,dy \,{\overline{\Phi(x,t)}}\,\frac{dxdt}{t}\\[4pt]= \int_\rn {\overline{A^*_\|(y)\nabla_y\,T \Phi(y,0)}}\, \nabla_y f(y)\,dy, 
\end{multline}
where $T=(L^*)^{-1}\,\frac 1t$, that is, as before,
\begin{equation}\label{eq5.26} T \Phi (y,s)=T_s\Phi(y):=\iint_{\reu} E^*(y,s;x,t) \,\Phi(x,t)\,\frac{dxdt}{t}. \quad (y,s)\in \reu.\end{equation}

\noindent The  goal is to show that 
\begin{equation}\label{eq5.27}\left|  \int_\rn {\overline{A^*_\|(y)\nabla_y\,T \Phi(y,0)}}\, \nabla_y f(y)\,dy\right|\lesssim \|\Phi\|_{\widetilde T^{p'}_1}\,\|\nabla_\|f\|_{L^p}, \end{equation}
\noindent for any $\Phi\in \widetilde T^{p'}_1$.

It is sufficient to verify \eqref{eq5.27} for $\Phi$ smooth and compactly supported in $\reu$, as such functions are dense in $ \widetilde T^{p'}_1$. At this point, recall the estimate \eqref{eq4.11} or, more precisely, \eqref{eq4.12}. One can carefully track the proof of \eqref{eq4.11}--\eqref{eq4.12} to see that all the computations are justified for $v:=T\Phi$ with $\Phi$ smooth and compactly supported in $\reu$. However, $v=T\Phi$ is not a solution in $\reu$ and hence, the term $III_1$ (that is, the last integral on the right-hand side of \eqref{eq4.12}) will not be annulated.  Instead, we have $L^*v(x,t)=L^*T\Phi(x,t)=\Phi(x,t)/t$ for $(x,t)\in\reu$.  

All in all, it is enough to bound the right-hand side of \eqref{eq4.12} with $v=T\Phi$, $\Phi\in \widetilde T_1^{p'}(\reu)$.
Thus,  one has to prove: 
$$ \|\A(t\,\nabla \partial_t T_t\Phi)\|_{L^{p'}(\rn)} \lesssim \|\Phi\|_{\widetilde T_1^{p'}(\reu)}, \qquad \|N_*(\partial_t T_t\Phi)\|_{L^{p'}(\rn)} \lesssim \|\Phi\|_{\widetilde T_1^{p'}(\reu)},$$

\noindent or, equivalently, in the language of tent spaces, 
\begin{eqnarray}\label{eq5.28} 
&&\left\|t\nabla \partial_t T_t\Phi\right\|_{T_2^{p'}(\reu)} \lesssim \|\Phi\|_{\widetilde T_1^{p'}(\reu)}, \qquad 
\\[4pt] \label{eq5.29} &&\left\|\partial_t T_t\Phi\right\|_{T_\infty^{p'}(\reu)}  \lesssim \|\Phi\|_{\widetilde T_1^{p'}(\reu)},
\end{eqnarray}

\noindent and to bound the last term on the right-hand side of \eqref{eq4.12} with ${\overline{f}}$ in place of $f$. However,  
\begin{equation}\label{eq5.30}
\iint_{\reu} \partial_t^2\p_t  {\overline{f}}(x)\,L^* T\Phi(x,t)\,tdtdx=\iint_{\reu} \partial_t^2\p_t   {\overline{f}}(x)\,\Phi(x,t)\,dtdx,
\end{equation}

\noindent so that using once again duality relationship for tent spaces the desired bound on \eqref{eq5.30} reduces to 
\begin{equation}\label{eq5.31} 
\|\widetilde N(t\partial_t^2\p_t   {\overline{f}})\|_{L^{p}(\rn)} \lesssim \|{\overline{\nabla_\| f}}\|_{L^p(\RR^n)}=\|\nabla_\| f\|_{L^p(\RR^n)}.
\end{equation} 
\noindent The estimate \eqref{eq5.31} follows from the bound $\widetilde N(t\partial_t^2\p_t   f)\lesssim M(\nabla_\|f)$, while the latter can be proved essentially by the same  argument as that in \eqref{eq4.6}--\eqref{eq4.9}, using  the off-diagonal decay estimates on $t^2\partial_t^2\p_t$ and the Poincar\'e inequality. 

It remains to discuss \eqref{eq5.28} and \eqref{eq5.29}. A direct computation shows that the adjoint of the operator $\partial_tT$, under the usual tent space pairing $\langle \Phi,\Psi\rangle=\iint_{\reu}\Phi\,\overline{\Psi}\, \frac{dxdt}{t}$, is $-\partial_s L^{-1}\,\frac 1t$ (since by $t$-independence one can swap the derivatives in $t$ and $s$ on the fundamental solution). Since $T^p_\infty=\left(T^{p'}_1\right)^*$, $1<p<\infty$, (see, e.g., \cite{HR}, Theorem 3.2, or the duality argument in \cite{CMS}),  \eqref{eq5.29} follows from \eqref{eq5.12} in the range $1<p<2+\eps$. 

Finally, the adjoint of the operator $t\nabla \partial_tT$, under the tent space pairing $\langle \Phi,\Psi\rangle=\iint_{\reu}\Phi\,\Psi\, \frac{dxdt}{t}$, is $-\partial_s L^{-1}\dv$, and  $T^p_2=\left(T^{p'}_2\right)^*$, $1<p<\infty$, (see \cite{CMS}), so that 
\eqref{eq5.28} follows from \eqref{eq5.11}, once again, in the range $1<p<2+\eps$, as desired.

This finishes the proof of \eqref{eq5.23}. 

We claim that the desired bound \eqref{eq5.21} follows from \eqref{eq5.23}, or, to be precise, from the estimate 
 \begin{equation}\label{eq5.23-bis}
\|N_*(\partial_t D_t^Lf)\|_{L^p(\rn)} \leq C \|\nabla_\| f\|_{L^p(\RR^n)}.
\end{equation}
Since $\partial_t D_t^Lf$ is a solution, \eqref{eq5.23-bis} is equivalent to \eqref{eq5.23} by the De Giorgi-Nash-Moser estimates. 

Indeed, using the argument  in \cite{KP}, p. 494, we see that  
the left-hand side of \eqref{eq5.21} is controlled by the sum of $\|M(\nabla_\| D_t^L \Big|_{t=0}f)\|_{L^p(\RR^n)}$ and $\|M(N_*(\partial_t D_t^Lf))\|_{L^p(\RR^n)}$, where $M$ 
 is the Hardy-Littlewood maximal function on $\RR^n$. The first of these two terms is bounded by $\|\nabla_\| D_t^L \Big|_{t=0}f\|_{L^p}$. The second one is bounded by the left-hand side of \eqref{eq5.23-bis}.
 
 Thus, it remains to show that 
\begin{equation}\label{eq5.31.1}
\nabla_\| D_t^L \Big|_{t=0}: \dot L^p_1(\rn)\to L^p(\rn), \quad 1<p<2+\eps.
\end{equation} 
Let $f, g \in C^\infty_0(\rn)$, (where $g$ is vector-valued) and set  $w(x,t):= S_{-t}^{L^*} \div_\| g(x)$, $x\in\rn$, so that, by \eqref{eq5.16},  
\begin{equation}\label{eq5.31.2}
 \|w(\cdot, 0)\|_{L^{p'}(\rn)} \lesssim \|g\|_{L^{p'}(\rn)}.
\end{equation} 

\noindent Note that $w$ is a solution (for $L^*$) in the lower half-space, and choose an appropriate solution $v$
in the lower half-space such that  $\partial_t v = w$.   Dualizing, and using the equation,
we then have
\begin{multline}
\int_{\rn}\nabla_\| D_t\Big|_{t=0} f(x)\,  \overline{g(x)}\,dx  =   \int_{\rn} f(x) \,\overline{\partial_{\nu_{A*}} w(x)} \,dx=   \int_{\rn} f(x) \,\overline{\partial_t \partial_{\nu_{A*}} v(x)} \,dx  \\
  =  -\sum_{j=1}^{n}\int_{\rn} \partial_j f(x) \,\overline{A^*_{j,n+1} \partial_t v(x)} \,dx- \int_{\rn }\nabla_\| f\,\overline{  A_\|^* \nabla_\| v}\,dx =: I + II.
 \end{multline}  

\noindent For term $I$, we just use \eqref{eq5.31.2}, and for term $II$, we 
apply estimate \eqref{eq3.9} and then \eqref{eq5.16} and \eqref{eq5.16-bis} (one can go over the argument of \eqref{eq3.9} and justify integration by parts and convergence of involved integrals even though the data of $w$ on the boundary is  not $C_0^\infty$).
\ep

\begin{theorem} \label{t5.32} Let $L$ be an elliptic operator with $t$-independent coefficients such that the solutions to $Lu=0$ and $L^*u=0$ in $\repm$ satisfy the De Giorgi-Nash-Moser estimates. Let $u$ be the solution to the Dirichlet problem $Lu=0$ in $\reu$, $u\Bigl|_{\partial\reu}=f$, for some $f\in C_0^\infty(\RR^n)$, in the sense of Lemma~\ref{l2.4}. 
Then  for $1<p<2+\eps$
\begin{equation}\label{eq5.33}
\|\widetilde N(\nabla u)\|_{L^p(\rn)} \leq C \left(\|\nabla_\| f\|_{L^p(\RR^n)}+  \|\partial_{\nu_{A}} u\|_{L^p(\RR^n)}\right),\end{equation}
\noindent  with $C$ and $\eps$ depending on the standard constants only.
\end{theorem}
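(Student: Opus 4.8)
The plan is to derive \eqref{eq5.33} from a Green formula (layer potential) representation of the Lemma~\ref{l2.4} solution $u$, combined with the mapping properties of the single and double layer potentials recorded in Propositions~\ref{p5.1} and~\ref{p5.19}. If $\partial_{\nu_A}u\notin L^p(\rn)$ the right-hand side of \eqref{eq5.33} is infinite and there is nothing to prove, so we may assume $g:=\partial_{\nu_A}u\in L^p(\rn)$, where $\partial_{\nu_A}u$ denotes the variational conormal derivative of \eqref{eq2.10} (equivalently, the one furnished by Lemma~\ref{l2.11}); note that $\nabla_\| f\in L^p(\rn)$ is automatic since $f\in C_0^\infty(\rn)$.

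The first, and main, step is to show that $u$ admits the representation
\[
u(x,t)=-D_t^L f(x)+S_t^L\big(\partial_{\nu_A}u\big)(x),\qquad (x,t)\in\reu .
\]
I would prove this by Green's identity: fix $X_0=(x_0,t_0)\in\reu$, apply it to $u$ and to the fundamental solution $Y\mapsto E^*(Y;X_0)$ on a region of the form $\big(B_R(X_0)\cap\{t>\delta\}\big)\setminus B_\rho(X_0)$, let $\rho\to0$ to extract $u(X_0)$, let $\delta\to0$ to pick up the two boundary integrals over $\rn$, which by \eqref{eq1.10} and \eqref{eq2.10} (in the $\dot L^2_{-1/2}$--$\dot L^2_{1/2}$ pairing of Lemma~\ref{l2.11}) produce $-D_{t_0}^Lf(x_0)$ and $S_{t_0}^L(\partial_{\nu_A}u)(x_0)$ respectively, and let $R\to\infty$ to discard what remains. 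The spherical terms vanish in the limit because, by Lemma~\ref{l2.7} (and its proof, which also controls $u$ near $\{t=0\}$ away from $\supp f$), $u$ decays at least like $|X|^{1-(n+1)/2}$ at infinity, $\nabla u$ decays one power faster by the Caccioppoli inequality \eqref{eq2.1}, and $E^*(\,\cdot\,;X_0)$, $\nabla E^*(\,\cdot\,;X_0)$ decay like $|Y|^{1-n}$ and $|Y|^{-n}$; weighed against the surface measure $\sim R^{n}$, each such term is $O(R^{(1-n)/2})\to0$, since $n\geq2$.

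Granting the representation, the estimate is immediate. Applying $\nabla$ and then the modified non-tangential maximal function $\N$ to the displayed identity, and using that $\N(\,\cdot\,)$ is subadditive,
\[
\|\N(\nabla u)\|_{L^p(\rn)}\;\leq\;\big\|\N\big(\nabla D_t^L f\big)\big\|_{L^p(\rn)}+\big\|\N\big(\nabla S_t^L g\big)\big\|_{L^p(\rn)} .
\]
By \eqref{eq5.21} of Proposition~\ref{p5.19} the first term is $\lesssim\|\nabla_\| f\|_{L^p(\rn)}$ for $1<p<2+\eps$, and by \eqref{eq5.5} of Proposition~\ref{p5.1}, applied with the $L^p$ function $g=\partial_{\nu_A}u$, the second term is $\lesssim\|g\|_{L^p(\rn)}=\|\partial_{\nu_A}u\|_{L^p(\rn)}$ in the same range. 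This yields \eqref{eq5.33} with $\eps$ the smaller of the two exponents supplied by Propositions~\ref{p5.1} and~\ref{p5.19} and with $C$ depending only on the standard constants.

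The main obstacle is the first step, i.e.\ justifying the Green formula representation at the level of regularity available here. Three points need care: (i) the boundary term at $\{t=\delta\}$ that involves $\nabla u$ must be shown to converge, as $\delta\to0$, to the \emph{variational} conormal derivative of \eqref{eq2.10}, since $u\in\widetilde W^{1,2}(\reu)$ only gives $\nabla u\in L^2_{loc}$ up to the boundary, so traces of $\nabla u$ are a priori only weak; (ii) the single layer $S_t^L(\partial_{\nu_A}u)$, defined in the first instance on $\dot L^2_{-1/2}$ data via Lemma~\ref{l2.11}, should be checked to agree with the absolutely convergent integral \eqref{eq1.10} once $\partial_{\nu_A}u$ is known to lie in $L^p$; and (iii) the decay of Lemma~\ref{l2.7} must be combined correctly with \eqref{eq2.1} and the fundamental-solution bounds to annihilate the terms at infinity. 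None of these is serious, but all of them rely on the qualitative decay guaranteed by the $C_0^\infty$ hypothesis on $f$, which is exactly why the reduction to smooth, compactly supported data was set up in Section~\ref{sPrelim}.
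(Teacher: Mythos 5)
Your overall strategy --- represent $u$ by the Green formula $u=-D_t^L f+S_t^L(\partial_{\nu_A}u)$ and then invoke \eqref{eq5.21} and \eqref{eq5.5} --- is exactly the one the paper uses, and that second, ``granting the representation'' part of your argument is fine. The difference, and the gap, is in how the Green formula is justified.

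You propose the classical route: apply Green's identity to $u$ and the singular test function $Y\mapsto E^*(Y;X_0)$ on $\bigl(B_R(X_0)\cap\{t>\delta\}\bigr)\setminus B_\rho(X_0)$, and send $\rho\to0$, $\delta\to0$, $R\to\infty$. The paper instead mollifies the fundamental solution: it sets $H_\eps:=(L^*)^{-1}\Phi^{X_0}_\eps$ with $\Phi^{X_0}_\eps$ a bump at $X_0$, which is a \emph{global} element of $\dot W^{1,2}(\ree)$ and a null solution of $L^*$ in the lower half-space, extends $u$ across $\{t=0\}$ by a $C_0^\infty$ function $F$ with $F|_{\rn}=f$, computes $\Phi_\eps * u(X_0)=\iint\nabla\tilde u\cdot\overline{A^*\nabla H_\eps}$ by integration by parts over all of $\ree$ (with no excision and no slicing), splits into upper and lower halves, and identifies each piece directly through Lemma~\ref{l2.11}. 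Only then does it send $\eps\to0$.

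The reason this care is needed is your step (iii$\to$i): the $\delta\to0$ limit. You need the surface integral $\int_{\{t=\delta\}}A\nabla u\cdot e_{n+1}\,\overline{E^*(\cdot,\delta;X_0)}\,dy$ to converge to the variational pairing $\langle\partial_{\nu_A}u,\overline{E^*(\cdot,0;X_0)}\rangle$. That would follow, e.g., from Lemma~\ref{l2.2}(iv), but Lemma~\ref{l2.2} is predicated on already knowing $\N(\nabla u)\in L^p$, which is precisely the conclusion \eqref{eq5.33} you are trying to establish --- so invoking it here is circular. At the regularity actually available (only $u\in\widetilde W^{1,2}(\reu)$ from Lemma~\ref{l2.4}, hence $\nabla u(\cdot,\delta)\in L^2_{loc}$ for a.e.\ $\delta$, and $\partial_{\nu_A}u$ only in the variational sense of \eqref{eq2.10}/Lemma~\ref{l2.11}), there is no reason the slice integrals should converge to the variational pairing; indeed the whole purpose of the paper's $H_\eps$ construction is to never take a trace of $\nabla u$ on a slice. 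Your parenthetical ``none of these is serious'' underestimates this point; it is the heart of the technical work in the paper's proof. (The $\rho\to0$ and $R\to\infty$ steps are genuinely minor; the only caveat is that the pointwise bound you quote on $\nabla E^*$ is really an integral-average bound, so the sphere $\partial B_R$ should be replaced by an average over the annulus $B_{2R}\setminus B_R$, a standard fix.) To close the gap you should either reproduce the paper's mollification argument, or supply a genuinely non-circular proof that the $\{t=\delta\}$ boundary integrals converge to $\langle\partial_{\nu_A}u,\overline{E^*(\cdot,0;X_0)}\rangle$.
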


\bp 
By Green's formula, the solution to $Lu=0$ in $\reu$, $u\Bigr|_{\partial\reu}=f$, can be written as 
\begin{equation}\label{eq5.34}u(x,t) =-\int_\rn {\overline{\partial_{\nu_{A^*},y} \,E^*(y,0;x,t)}}\, f(y)\,dy+\int_\rn E(x,t; y,0)\, \partial_{\nu_{A}} u(y)\,dy,\end{equation}
\noindent or, in the language of layer potentials, 
\begin{equation}\label{eq5.35} u=-D_t^L (f)+S_t^L(\partial_{\nu_{A}} u). \end{equation}

\noindent With \eqref{eq5.35} at hand, \eqref{eq5.33} follows directly from \eqref{eq5.21} and \eqref{eq5.5}. 

It remains to justify the use of the Green's formula in the present context.

To this end, fix $X_0:=(x_0,t_0)\in\reu$, and let $\eps>0$, with $\eps< t_0/8$.   We define a nice approximate
identity on $\ree$ in the standard way as follows.  Let 
$\Phi\in C^\infty_0(\ree)$,
with $\Phi\geq 0$, $\supp(\Phi)\subset B(0,1)$, and $\iint_{\ree}\Phi=1$, and set
$\Phi_\eps(X):= \eps^{-n-1} \Phi(X/\eps)$.  For $(y,s)\in\ree$, we set
$$H_\eps(y,s) := \Big(\Phi_\eps * E^*(y,s,\cdot,\cdot)\Big)(x_0,t_0)\,.$$
We observe that by definition of the fundamental solution $E^*$,
$$H_\eps = \left(L^*\right)^{-1} \Phi^{X_0}_\eps\,,$$
where $\Phi^{X_0}_\eps (X):= \Phi_\eps(X_0-X)\in C^\infty_0(B(X_0, \eps))$.
In particular, then, we have that $H_\eps\in \dot{W}^{1,2}(\ree)$.

Let $F$ be a $C^\infty_0(\ree)$ extension of $f = u(\cdot,0)$, and set
\begin{equation*}
\tilde{u}(x,t):=\left\{
\begin{array}{l}
u(x,t)\,, 
\,\,t\geq 0
\\[4pt]
F(x,t)\,,
\,\,t\leq 0\,.
\end{array}
\right.
\end{equation*}
Since $\eps\ll t_0$,  and since $L^* H_\eps = \Phi^{X_0}_\eps$  in the weak sense in $\ree$
(and in particular, $L^*  H_\eps = 0$ in the lower half-space),
we have
\begin{multline}\label{eq5.36}\Phi_\eps*u(X_0) =
 \iint_{\ree}\tilde{u}(X)\,\Phi^{X_0}_\eps(X)\,  dX 
= \iint_{\ree} \nabla \tilde{u} \cdot \overline{A^*\nabla H_\eps}\\[4pt]
=  \iint_{\reu} A\nabla u \cdot \overline{\nabla H_\eps} \,+ \, 
\iint_{\mathbb{R}^{n+1}_-} \nabla F \cdot \overline{A^*\nabla H_\eps}
\,=\, \langle \partial_{\nu_A} u\,,\, \overline{h_\eps}\rangle\, -\, 
\langle \overline{\partial_{\nu_{A^*}}H_\eps}\,,\, f\rangle\,,
\end{multline}
where $h_\eps:= H_\eps(y,0)$, and where in the last step we have used 
Lemma~\ref{l2.11}.
By definition of $H_\eps$, \eqref{eq5.35}  follows, letting $\eps\to 0$, once we establish the convergence in $\eps$ for both terms on the right-hand side of \eqref{eq5.36}. 

The convergence of the first term is quite easy, as $h_\eps \to E^*$ in $\dot{L}^2_{1/2} (\rn)$.

Let us discuss the second one. In this regard, one has to show  that 
for $\eps\ll t_0$, the variational co-normal derivative $-\partial_{\nu_{A^*}}H_\eps$, initially defined in the sense of
Lemma~\ref{l2.11}, belongs to $L^2(\rn)$, and satisfies
\begin{equation}\label{eq5.37}-\partial_{\nu_{A^*}}H_\eps(y,0) =
e_{n+1}\cdot A^* \Big(\Phi_\eps*\nabla_{y,s} E^*(y,s,\cdot,\cdot)\big|_{s=0}\Big)(X_0).
\end{equation}

That the right hand side of \eqref{eq5.37} belongs to $L^2(\rn)$ follows directly from 
\cite[Lemma 2.8]{AAAHK}.  Thus we need only verify the identity \eqref{eq5.37}.
By Lemma~\ref{l2.11}, applied in the lower half-space, it is enough to verify that for any
$F\in C^\infty_0(\ree)$, with $F(\cdot,0):= f$, we have
\begin{equation}\label{eq5.38}\iint_{\RR^{n+1}_-}\nabla F\cdot\overline{A^*\nabla H_\eps} \,=\, \int_{\rn} f (y)\,
\overline{\vec{N}\cdot A^* \Big(\Phi_\eps*\nabla_{y,s} E^*(y,s,\cdot,\cdot)\big|_{s=0}\Big)(X_0)}\,dy\,.
\end{equation}
To this end, let $P_\eta$ be a nice approximate identity in $\rn$, applied in the $y$ variable.
By the divergence theorem 
\begin{multline*}\int_{\rn} f (y)\,
\overline{\vec{N}\cdot P_\eta\left(A^* \Big(\Phi_\eps*\nabla_{y,s} E^*(y,s,\cdot,\cdot)\big|_{s=0}\Big)(X_0)
\right)}\,dy \\[4pt]=\, \iint_{\RR^{n+1}_-}\dv \left(F(y,s)\,\overline{
 P_\eta\left(A^* \Big(\Phi_\eps*\nabla_{y,s} E^*(y,s,\cdot,\cdot)\Big)(X_0)
\right)}\right)\, dyds\\[4pt]=\, \iint_{\RR^{n+1}_-}\nabla F(y,s)\cdot\overline{
 P_\eta\left(A^* \nabla H_\eps(y,s)
\right)}\, dyds\,,
\end{multline*}
where in the last step we have used the definition of $H_\eps$, and that it is a null solution of $L^*$ in 
$\RR^{n+1}_-$.  We now obtain \eqref{eq5.38} by letting $\eta \to 0$.  

This finishes the argument.
\ep

\begin{corollary}\label{c5.39} Assume that $L$ is an elliptic operator with $t$-independent coefficients, and that $L$ and $L^*$ satisfy the De Giorgi-Nash-Moser condition. If for some $1<p<2+\eps$ the Dirichlet problem \eqref{Dp'} for $L^*$ is solvable in $\repm$ and, in addition to \eqref{eq1.4}, the solution satisfies the square function estimates \eqref{eq1.12},
then the Regularity problem \eqref{Rp} for $L$ is solvable in $\repm$. That is, assertion (a) implies (b) in Theorem~\ref{t1.11}. 
\end{corollary}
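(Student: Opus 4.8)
The plan is to obtain \eqref{Rp} for $L$ by simply combining the two principal estimates already established in the previous sections: the one-sided Rellich inequality of Theorem~\ref{t3.1} and the solution bound of Theorem~\ref{t5.32}. Recall that solvability of \eqref{Rp} is, by definition, a statement about $C_0^\infty(\rn)$ data, so fix $f\in C_0^\infty(\rn)$ and let $u\in\widetilde W^{1,2}(\reu)$ be the weak solution of $Lu=0$, ${\rm Tr}\,u=f$, furnished by Lemma~\ref{l2.4} (in particular $u\in W^{1,2}_{\loc}(\reu)$, as required by the definition of solvability). Hypothesis (a) says precisely that the Dirichlet problem \eqref{Dp'} for $L^*$ is solvable in $\reu$ together with the square function bound \eqref{eq1.12}; this is exactly the hypothesis of Theorem~\ref{t3.1}, so that theorem applies and shows that the variational conormal derivative $\partial_{\nu_A}u$ of \eqref{eq2.10} lies in $L^p(\rn)$, with
\[
\|\partial_{\nu_A}u\|_{L^p(\rn)}\lesssim\|\nabla_\|f\|_{L^p(\rn)}.
\]

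Inserting this bound into Theorem~\ref{t5.32}, which holds for $1<p<2+\eps$, we obtain
\[
\|\widetilde N(\nabla u)\|_{L^p(\rn)}\lesssim\|\nabla_\|f\|_{L^p(\rn)}+\|\partial_{\nu_A}u\|_{L^p(\rn)}\lesssim\|\nabla_\|f\|_{L^p(\rn)},
\]
which is exactly \eqref{eq1.5}. It remains only to check that this $u$ really is a solution of \eqref{Rp} with boundary data $f$. Since $\widetilde N(\nabla u)\in L^p(\rn)$, Lemma~\ref{l2.2}(i) provides a non-tangential limit $u\to\tilde f$ a.e., with $\tilde f\in\dot L^p_1(\rn)$; and because $u$ has Sobolev trace $f$ by construction, the non-tangential boundary value agrees with the variational trace, so $\tilde f=f$ (this identification is standard; compare the Remark following Lemma~\ref{l2.2} and \cite{Homog}). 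Thus $u$ solves $Lu=0$ in $\reu$, satisfies $u\to f$ n.t., and obeys \eqref{eq1.5}, which is precisely solvability of \eqref{Rp} for $L$ in $\reu$. The identical reasoning in the lower half-space — legitimate because hypothesis (a) is assumed in $\RR^{n+1}_\pm$ and Theorems~\ref{t3.1} and \ref{t5.32} have lower–half-space analogues — yields solvability of \eqref{Rp} in $\RR^{n+1}_-$, hence in $\RR^{n+1}_\pm$.

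Since the heavy lifting has already been done in Theorems~\ref{t3.1} and \ref{t5.32}, there is no substantive obstacle in this step; the only point deserving a word is the identification $\tilde f=f$ of the non-tangential limit with the prescribed data, which follows in the usual way from the $\widetilde W^{1,2}$ construction of $u$ in Lemma~\ref{l2.4}.
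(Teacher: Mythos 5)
Your proof is correct and follows the same route as the paper: combine the one-sided Rellich inequality of Theorem~\ref{t3.1} with the layer-potential bound of Theorem~\ref{t5.32} to conclude $\|\widetilde N(\nabla u)\|_{L^p}\lesssim\|\nabla_\| f\|_{L^p}$ for the weak solution with $C_0^\infty$ data. Your extra paragraph on identifying the non-tangential limit with the prescribed Sobolev trace spells out a point the paper leaves implicit (via the Remark after Lemma~\ref{l2.2}), but it is the same argument.
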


\bp The combination of Theorems~\ref{t5.32} and \ref{t3.1} entails that every solution to $Lu=0$ in $\reu$, $u\Bigl|_{\partial\reu}=f\in C_0^\infty(\rn)$ in the weak sense of Lemma~\ref{l2.4} satisfies the estimate \eqref{eq1.5}. 
\ep

\begin{lemma}\label{l5.43} Assertions (b) and (f) of Theorem~\ref{t1.11} are equivalent. \end{lemma}

\bp  The fact that assertion $(b)$ of Theorem~\ref{t1.11} implies $(f)$ was proved in \cite{BM}, Remark~9.15. Indeed, our notion of solvability of \eqref{Rp} as defined in the Introduction, complemented by the Remark following Lemma~\ref{l2.2}, implies compatible solvability of problem $(D)_{p,1}$ as defined in \cite{BM} (see definitions in Section~2.4 of \cite{BM}, in particular, Definition~2.37 and the definition of compatible solvability right after it). Note, in particular, our comments regarding the sense of the convergence to the boundary data in the introduction to the present manuscript. 

Then Remark~9.15 in \cite{BM} detailed the passage from compatible solvability to the well-posedness. 

The fact that $(f)$ implies $(b)$ in Theorem~\ref{t1.11} follows directly from the definitions.\ep

\section{Invertibility of layer potentials. Layer potential representations of solutions}

Propositions~\ref{p5.1} and Remark \ref{r5.11} ascertain that the layer potentials are always bounded for elliptic operators with $t$-independent coefficients that satisfy the De Giorgi-Nash-Moser bounds. We note that, furthermore, \eqref{eq5.5} and Lemma~\ref{l2.2} imply that for $f\in L^p(\rn)$, $1<p<2+\eps$, the single layer potential $S^Lf$ has well-defined normal and tangential derivatives on the boundary in the sense of Lemma~\ref{l2.2} (denoted below by $-\vec e_n\cdot A\,\nabla S^Lf$ and $\nabla_\|S^Lf$, respectively). Both $\vec e_n\cdot A\,\nabla S^Lf$ and $\nabla_\|Sf$ belong to $L^p$ with the appropriate estimates. In particular, 
\begin{equation}\label{eq6.1}
S^L:L^p(\rn)\to\dot L^p_1(\rn),
\end{equation}
\noindent is a bounded operator for all $1<p<2+\eps$. When necessary, we shall use the superscripts $+$ and $-$ to underline the limit taken from the upper and the lower half space, respectively.

Recall the following jump relations:
\begin{align}
\label{eq6.2}
\vec e_n\cdot A(\nabla S^L)^+g - \vec e_n\cdot A(\nabla S^L)^-g&= -g
,\\
\label{eq6.3}
(\nabla_\parallel S^L)^+ g - (\nabla_\parallel S^L)^- g &= 0,\\
\label{eq6.4}
(D^L)^+ f - (D^L)^- f &= -f
.\end{align}

The jump relations \eqref{eq6.2}, \eqref{eq6.3} can be found, e.g.,  in the proof of \cite[Lemma 4.18]{AAAHK} for $g\in L^2(\RR^n)$, and then extended to any $g\in L^p(\RR^n)$, $1<p<2+\eps$, using \eqref{eq5.5} and Lemma~\ref{l2.2}. In particular $(S^L)^+=(S^L)^-$ regarded as operators $L^p(\rn)\to\dot L^p_1(\rn)$,  $1<p<2+\eps$. Similarly, \eqref{eq6.4} is established for $f\in C_0^\infty(\rn)$ in \cite[Lemma 4.18]{AAAHK} and can be extended by continuity to $f\in \dot L^p_1(\rn)$ for all $p$ such that \eqref{eq5.21} holds.

\begin{proposition} \label{p6.5}
Assume that $L$ is an elliptic operator with $t$-independent coefficients, and that $L$ and $L^*$ satisfy the De Giorgi-Nash-Moser condition. Assume further that for some $1<p<\infty$ the Regularity problem \eqref{Rp} for the operator $L$ is solvable in $\repm$.
Then the operator $(S^L)^\pm: L^p(\rn)\to \dot L^p_1(\rn)$  is compatibly invertible, $\frac 1p+\frac{1}{p'}=1$. An analogous statement holds for $L^*$.
\end{proposition}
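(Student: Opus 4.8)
\emph{The plan.}
I would set $S^L_0:=(S^L)^+=(S^L)^-$, which is bounded $L^p(\rn)\to\dot L^p_1(\rn)$ by \eqref{eq6.1}, and, via the open mapping theorem, reduce the invertibility to surjectivity together with injectivity; the corresponding assertion for $L^*$ (that is, invertibility of $S^{L^*}_0:L^{p'}_{-1}(\rn)\to L^{p'}(\rn)$, where $L^{p'}_{-1}:=(\dot L^p_1)^*$) will then come for free, since the Banach-space adjoint of an invertible operator is invertible. Surjectivity of $S^L_0$ I would extract from the solvability of the Regularity problem \eqref{Rp} for $L$ (Corollary~\ref{c5.39}) by means of the Green representation formula (Theorem~\ref{t5.32}) and the jump relations \eqref{eq6.3}--\eqref{eq6.4}. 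For injectivity I would pass to the transpose: using \eqref{eq1.9} and $E^*(x,0;y,0)=\overline{E(y,0;x,0)}$, the transpose of $S^L_0$ — for the pairings of $\dot L^p_1$ with $L^{p'}_{-1}$ and of $L^p$ with $L^{p'}$ — is, up to a harmless complex conjugation, the operator $S^{L^*}_0:L^{p'}_{-1}\to L^{p'}$ (bounded by \eqref{eq5.16} and Remark~\ref{r5.11}); since $L^p$ is reflexive, $S^L_0$ is injective as soon as $S^{L^*}_0$ has dense range, and dense range of $S^{L^*}_0$ I would obtain by re-running the surjectivity scheme with $L$ replaced by $L^*$ and \eqref{Rp} replaced by the Dirichlet problem \eqref{Dp'} for $L^*$ with square function bounds, which is exactly the hypothesis.

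\emph{Surjectivity.}
Given $F\in C_0^\infty(\rn)$ (dense in $\dot L^p_1$; the bounds below yield a bounded right inverse that then extends), I would use Corollary~\ref{c5.39} to produce solutions $u^+$ of $Lu^+=0$ in $\reu$ and $u^-$ of $Lu^-=0$ in $\RR^{n+1}_-$, both with nontangential trace $F$ and $\|\N(\nabla u^\pm)\|_{L^p}\lesssim\|\nabla_\| F\|_{L^p}$ — for $C_0^\infty$ data these are the $\widetilde W^{1,2}$ solutions of Lemma~\ref{l2.4} — and then invoke Lemma~\ref{l2.2} to see that the outward conormal derivatives $g^+:=-e_{n+1}\cdot A\nabla u^+\!\mid_{t\to 0^+}$ and $g^-:=e_{n+1}\cdot A\nabla u^-\!\mid_{t\to 0^-}$ lie in $L^p(\rn)$ with $\|g^\pm\|_{L^p}\lesssim\|\nabla_\| F\|_{L^p}$. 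Running the argument of Theorem~\ref{t5.32} in each half-space (in $\RR^{n+1}_-$ the outward normal $+e_{n+1}$ turns $D^L_t$ into $-D^L_t$) gives $u^+=-D^L_t F+S^L_t g^+$ in $\reu$ and $u^-=D^L_t F+S^L_t g^-$ in $\RR^{n+1}_-$; letting $t\to 0$, and using that the single layer potential is continuous across $\rn$ (so $S^L_t g^\pm\to S^L_0 g^\pm$ from either side, consistently with \eqref{eq6.3}), I obtain $F=-(D^L)^+F+S^L_0 g^+$ and $F=(D^L)^-F+S^L_0 g^-$, whence, adding and using \eqref{eq6.4}, $F=S^L_0(g^++g^-)$. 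Thus $F\mapsto g^++g^-$ is a bounded right inverse of $S^L_0$, so $S^L_0$ is onto.

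\emph{Injectivity.}
By the duality above it suffices to show that the range of $S^{L^*}_0:L^{p'}_{-1}(\rn)\to L^{p'}(\rn)$ contains $C_0^\infty(\rn)$, which is dense in $L^{p'}$. Given $\phi\in C_0^\infty(\rn)$, I would use the solvability of \eqref{Dp'} for $L^*$ in $\RR^{n+1}_\pm$ with square function bounds to produce solutions $w^+$ of $L^*w^+=0$ in $\reu$ and $w^-$ of $L^*w^-=0$ in $\RR^{n+1}_-$ with nontangential trace $\phi$ satisfying \eqref{eq1.4} and \eqref{eq1.12}; extending Lemma~\ref{l2.11} and the argument of Theorem~\ref{t5.32} to this situation — this is where the square function estimate enters, placing the conormal derivative in $L^{p'}_{-1}=(\dot L^p_1)^*$ — I obtain outward conormal derivatives $\psi^\pm\in L^{p'}_{-1}(\rn)$ and the Green representations $w^+=-D^{L^*}_t\phi+S^{L^*}_t\psi^+$ in $\reu$ and $w^-=D^{L^*}_t\phi+S^{L^*}_t\psi^-$ in $\RR^{n+1}_-$. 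Passing to the boundary exactly as in the surjectivity step (single layer continuous, double-layer jump for $L^*$) gives $\phi=S^{L^*}_0(\psi^++\psi^-)$, so $S^{L^*}_0$ has dense range. Hence $S^L_0$ is injective; being a bounded bijection it is invertible, and then so is its adjoint $S^{L^*}_0:L^{p'}_{-1}\to L^{p'}$.

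\emph{Main obstacle.}
The substantive point is the justification, at the generality needed here, of the Green representation formula together with membership of the conormal derivative in the correct space: one must extend the approximation argument in the proof of Theorem~\ref{t5.32} (and of Lemma~\ref{l2.11}) to the lower half-space and, above all, show that a solution of \eqref{Dp'} obeying the square function bound \eqref{eq1.12} has conormal derivative in $L^{p'}_{-1}=(\dot L^p_1)^*$ with the natural estimate. The remaining bookkeeping — keeping straight the sign conventions for the conormal derivatives and the double layers in $\reu$ versus $\RR^{n+1}_-$, so that \eqref{eq6.2}--\eqref{eq6.4} are applied with the correct signs — is routine but must be done with care.
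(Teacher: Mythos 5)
Your surjectivity argument is essentially the paper's: both run the Green representation for the solutions of the Regularity problem in the two half-spaces, use the jump relation \eqref{eq6.4} for the double layer and the continuity \eqref{eq6.3} of $\nabla_\| S^L_t$ across the boundary, and conclude that $(S^L)^\pm$ applied to the difference of conormal derivatives reproduces $f$ (your ``$g^++g^-$'' with outward conormals is the paper's ``$g_--g_+$'' with the convention $g^\pm=\vec e_{n+1}\cdot A\nabla u^\pm$, so the identities agree). The two proofs part ways at injectivity.

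The paper does not use duality at all for this step. Instead, it proves a direct coercivity bound: for $g\in C_0^\infty$, the jump relation \eqref{eq6.2} gives
\[
\|g\|_{L^p}=\bigl\|\vec e_{n+1}\cdot A(\nabla S^L)^+g-\vec e_{n+1}\cdot A(\nabla S^L)^-g\bigr\|_{L^p},
\]
and then the Rellich-type inequality (Theorem~\ref{t3.1}) applied in each half-space to the solution $t\mapsto S^L_t g$ bounds each one-sided conormal derivative by $\|\nabla_\|S^L_0 g\|_{L^p}$; the result is the quantitative estimate $\|g\|_{L^p}\lesssim\|S^L_0 g\|_{\dot L^p_1}$, which is then extended to all $g\in L^p$ by density and also powers the limiting argument needed to pass from $C_0^\infty$ to general $f\in\dot L^p_1$ in the surjectivity step. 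Your route via dense range of $S^{L^*}_0$ is logically sound (reflexivity of $L^p$ makes dense range of the adjoint equivalent to injectivity, and open mapping then upgrades the bounded bijection to an isomorphism), but it is noticeably heavier: to show $C_0^\infty\subset\operatorname{ran}(S^{L^*}_0)$ you must define the conormal derivative of the $L^*$-Dirichlet solution as an element of $L^{p'}_{-1}=(\dot L^p_1)^*$ and prove the Green formula in that class, which is precisely the machinery the paper only develops afterward, in Section~\ref{sWP} (around \eqref{eqWP1}), and which itself rests on Theorem~\ref{t3.1}. There is no circularity in doing so, but it duplicates Section~\ref{sWP} and it replaces a two-line application of Theorem~\ref{t3.1} to $S^L_t g$ by a second pass through the whole Dirichlet-side apparatus. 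The paper's direct Rellich bound is the more economical argument, and it also produces the explicit lower bound on $S^L_0$ rather than invertibility by soft functional analysis.

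One small inaccuracy to flag: you read the paper's ``An analogous statement holds for $L^*$'' as the invertibility of the adjoint $S^{L^*}_0:L^{p'}_{-1}\to L^{p'}$. That invertibility is indeed a corollary (and is what Corollary~\ref{c6.9} needs), but the ``analogous statement'' in the proposition is the same assertion with the roles of $L$ and $L^*$ swapped — namely that $(S^{L^*})^\pm:L^p\to\dot L^p_1$ is invertible under the hypothesis that $(D_{p'})$ for $L$ (not $L^*$) is solvable with square function bounds — which is a different statement under different hypotheses, proved by repeating the same argument with $L\leftrightarrow L^*$, not by passing to adjoints.
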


A few remarks are in order here.

\begin{remark}\label{r6.5.2} We note that in the statement of Proposition~\ref{p6.5} it is sufficient to assume instead of solvability of \eqref{Rp} that the Dirichlet problem \eqref{Dp'} for $L^*$ is solvable in $\repm$ and, in addition to \eqref{eq1.4}, the solution satisfies the square function estimates \eqref{eq1.12}. This would entail \eqref{Rp} by Corollary~\ref{c5.39}.
\end{remark}

\begin{remark}\label{r6.5.3} Finally, let us point out that the solvability of \eqref{Rp} for some $1<p<2+\eps$ implies that for every $f\in C_0^\infty(\rn)$ the weak solution to $Lu=0$ in $\reu$, $u\Bigl|_{\partial\reu}=f\in C_0^\infty(\rn)$ has a variational conormal in $L^p(\rn)$ which satisfies estimate \eqref{eq3.2}. This is a consequence of the definition of \eqref{Rp} and Lemma~\ref{l2.2}, \eqref{eq2.3}.
\end{remark}

\noindent {\it Proof of Proposition~\ref{p6.5}}. First of, the boundedness of $(S^L)^\pm$ follows from the discussion of \eqref{eq6.1} above. The bounds from below can be established as follows. 
By \eqref{eq6.2}, \eqref{eq6.3} and Lemma~\ref{l2.2}, for any  $g\in C_0^\infty(\rn)$ we have
\begin{equation}\label{eq6.6}\|g\|_{L^p(\rn)}
=
\|\vec e_n\cdot A(\nabla S^L)^+g - \vec e_n\cdot A(\nabla S^L)^-g\|_{L^p(\rn)}
 \leq C\,\|(\nabla_\| S^Lg)^{\pm}\|_{L^p(\rn)}.
\end{equation}
To justify this, we point out that whenever $g\in C_0^\infty(\rn)$, the functions $(S_L)^{\pm}g$ belong
to the space $L_1^2(\rn)\cap L^r(\rn)\cap L^q(\rn)$ from Lemma~\ref{l2.4} and to $\dot L_1^p(\rn)$. This can be seen from \eqref{eq6.1} and its dual. 
Hence, $(S_L)^{\pm}g$ can be approximated by $f_k\in C_0^\infty(\rn)$ in $L_1^2(\rn)\cap L^r(\rn)\cap L^q(\rn)$ and in $\dot L_1^p(\rn)$. 
Using Lemma~2.5 in \cite{AAAHK}, one can also show that the solution $S_t g$ belongs to $\widetilde W^{1,2}(\reu)$. 
Now, for each such $f_k$ there exists a weak solution of the Dirichlet problem by Lemma~\ref{l2.4} (call it $u_k$) and, due to \eqref{eq2.5}, these solutions converge to $S_t g$ in $\widetilde W^{1,2}(\rn)$ norm. Moreover, $u_k$ satisfy estimate \eqref{eq3.2} by our assumptions and Remark~\ref{r6.5.3}. Hence, the corresponding weak conormal derivatives $\partial_{\nu_A}u_k$ form a Cauchy sequence in $L^p$ and thus, converge to some $L^p$ function, $h$, in $L^p$ norm. On the other hand,  as mentioned above, $u_k$ converge to $S_t g$ in $\widetilde W^{1,2}(\rn)$ norm and hence, $\partial_{\nu_A}u_k$ converge to the conormal derivative of $S_t g$ in the sense of distributions. Thus, the conormal derivative of $S_t g$ can be identified with $h$ and its $L^p$ norm is bounded by the $L^p$ norm of the tangential derivative, as desired. 

Next, we note that \eqref{eq6.6} for all $g\in C_0^\infty(\rn)$ implies \eqref{eq6.6} for all $g\in L^p(\rn)$. Indeed, every $g\in L^p(\rn)$ can be approximated in $L^p$ by $C_0^\infty$ functions $g_k$, and by \eqref{eq6.1} $(\nabla_\| S^Lg)^{\pm}$ is well-defined, belongs to $L^p$, and the sequence $(\nabla_\| S^Lg_k)^{\pm}$ converges in $L^p$ to $(\nabla_\| S^Lg)^{\pm}$. Thus, \eqref{eq6.6}  holds for all $g\in L^p(\rn)$ as well.

Let us show that $(S^L)^\pm$ is surjective\footnote{We note that the surjectivity argument presented here was inspired by the surjectivity result for layer potentials in \cite{BM}}. Choose some $f\in C_0^\infty(\rn)$. Let $u^\pm$ be the solutions to the Dirichlet problem in the weak sense of Lemma~\ref{l2.4} with boundary data~$f$ in $\RR^{n+1}_\pm$. By our assumptions and Remark~\ref{r6.5.3}, the distributions $g^\pm = \vec e_n\cdot A\nabla u^\pm\Big|_{\partial\RR^{n+1}_\pm}$ lie in $L^p(\RR^n)$.

We shall show that 
\begin{equation}\label{eq6.7}
(S^L)^\pm (g_--g_+)=f\quad\mbox{in}\quad \dot L^p_1(\RR^n).
\end{equation}
According to the proof of Theorem~\ref{t5.32}, we have 
$$ u^{\pm}=-(D_t^L)^{\pm} (f)+(S_t^L)^{\pm}(g^{\pm}). $$

Using jump relations \eqref{eq6.2}--\eqref{eq6.4}, we deduce that $$u^+\Bigr|_{\rn}-u^-\Bigr|_{\rn}=f+(S^L)^\pm (g_+-g_-).$$ Restriction to the boundary is interpreted here in the sense of the non tangential limit and the weak-$L^p$ limit of the gradient, and the equality holds in $\dot L^p_1(\rn)$. We used, in particular, the fact that $(S^L)^+=(S^L)^-$ regarded as operators $L^p(\rn)\to\dot L^p_1(\rn)$,  alluded to above. On the other hand, by definition of $u$ as a solution to  \eqref{Rp} with data $f$, we have $u^+\Bigr|_{\rn}-u^-\Bigr|_{\rn}=0$, again in the sense of the non-tangential limit of the gradient and equality  in $\dot L^p_1(\rn)$. The combination of these two facts immediately yields \eqref{eq6.7} for $f\in C_0^\infty(\rn)$. Now, for general $f\in \dot L_1^p(\rn)$ we can use the limiting procedure, \eqref{eq6.7} for $C_0^\infty$ functions and \eqref{eq6.6} to find $g\in L^p(\rn)$ such that $(S^L)^\pm g=f$.

Let us discuss the compatibility of the involved inverses.  Note that by construction, if $f\in C_0^\infty(\rn)$ then  $g:=g_--g_+$ such that $(S^L)^\pm g=f$ satisfies $g\in L^p(\rn)$ and also $g\in \dot L^2_{-1/2}(\rn)$. By density we can conclude that the operator $(S^L)^\pm: L^p(\rn)\cap \dot L^2_{-1/2}(\rn) \to \dot L^p_1(\rn)\cap \dot L^2_{1/2}(\rn) $ is also surjective (cf. Theorem~9.13 in \cite{BM}). Thus, the resulting inverse indeed satisfies the compatibility property (see the proof of Theorem~3.18 in \cite{BM} for a detailed discussion). \ep

\begin{corollary}\label{c6.8} Assume that $L$ is an elliptic operator with $t$-independent coefficients, and that $L$ and $L^*$ satisfy the De Giorgi-Nash-Moser condition. 

If \eqref{Rp} is solvable in $\repm$ for some $1<p<2+\eps$ then the solution can be represented by means of compatible  layer potentials, that is, assertion (b) implies (c) in Theorem~\ref{t1.11}. An analogous statement holds for $L^*$.
\end{corollary}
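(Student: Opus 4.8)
The plan is to deduce assertion (c) from the well-posedness of \eqref{Rp} for $L$ in $\RR^{n+1}_\pm$ by first showing that $S_0^L:L^p(\rn)\to\dot L^p_1(\rn)$ is invertible and then identifying the solution of \eqref{Rp} with its single layer potential expression through uniqueness. First I would collect what is already available: $S_0^L:L^p\to\dot L^p_1$ is bounded by \eqref{eq6.1}, the map $f\mapsto\N(\nabla S_t^L f)$ is bounded on $L^p$ by \eqref{eq5.5}, and, by Lemma~\ref{l2.2}, for $g\in C_0^\infty(\rn)$ the function $S_t^L g$ has non-tangential boundary limits from $\RR^{n+1}_\pm$ whose one-sided tangential traces coincide, by \eqref{eq6.3}, and equal $S_0^L g$ in $\dot L^p_1$. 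Using the membership $S_t^L g\in\widetilde W^{1,2}(\RR^{n+1}_\pm)$ for $g\in C_0^\infty$ (Lemma~2.5 of \cite{AAAHK}) together with the uniqueness statement in Lemma~\ref{l2.4}, the restriction of $S_t^L g$ to either half-space is the Lemma~\ref{l2.4} solution with data $S_0^L g$, hence, by Theorem~\ref{t5.32}, it carries the Green-formula representation and therefore lies in the uniqueness class in which \eqref{Rp} is well-posed.

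The invertibility of $S_0^L$ then splits into two parts. For the lower bound, the key point is that, $S_t^L g$ being \emph{the} solution of \eqref{Rp} with data $S_0^L g$, the well-posedness estimate \eqref{eq1.5}, applied in both half-spaces, yields $\|\N(\nabla S_t^L g)\|_{L^p(\rn)}\le C\| \nabla_\| (S_0^L g)\|_{L^p(\rn)}$; hence, by Lemma~\ref{l2.2} and \eqref{eq2.3}, the one-sided co-normal derivatives of $S_t^L g$ are bounded in $L^p$ by $C\| \nabla_\| (S_0^L g)\|_{L^p}$, and the jump relation \eqref{eq6.2} gives $\|g\|_{L^p}\le C\| \nabla_\| (S_0^L g)\|_{L^p}$ for $g\in C_0^\infty$, hence for all $g\in L^p$ by density together with \eqref{eq6.1}. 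For surjectivity I would follow the scheme in the proof of Proposition~\ref{p6.5}: given $f\in C_0^\infty$, let $u^\pm$ be the solutions of \eqref{Rp} with data $f$ in $\RR^{n+1}_\pm$, which by well-posedness exist in the Green-formula class; set $g^\pm\in L^p$ to be their co-normal derivatives via Lemma~\ref{l2.2}; insert the Green-formula representations $u^\pm=-(D_t^L)^\pm f+(S_t^L)^\pm g^\pm$, pass to non-tangential limits, and use the jump relations \eqref{eq6.2}--\eqref{eq6.4} together with $(S^L)^+=(S^L)^-$ on $\dot L^p_1$ to obtain $u^+\big|_{\rn}-u^-\big|_{\rn}=f+S_0^L(g^+-g^-)$; since $u^\pm\big|_{\rn}=f$ by construction, this gives $S_0^L(g^--g^+)=f$, and the lower bound just established lets one pass from $C_0^\infty$ data to all of $\dot L^p_1$. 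Hence $S_0^L:L^p\to\dot L^p_1$ is invertible.

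Finally, to obtain the representation \eqref{eq1.14}: for $f\in\dot L^p_1$ put $g:=(S_0^L)^{-1}f$ and $v:=S_t^L g$ on $\reu$; by the first paragraph $v$ is a solution of \eqref{Rp} with data $S_0^L g=f$ lying in the Green-formula class, so by well-posedness it coincides with the solution $u$ of \eqref{Rp} with data $f$, that is, $u=S_t^L(S_0^L)^{-1}f$. Combined with the $L^p$-boundedness of $f\mapsto\N(\nabla S_t^L f)$ from \eqref{eq5.5} and the invertibility of $S_0^L$, this is exactly assertion (c); the case of $L^*$ is handled identically. I expect the main obstacle to be the bookkeeping needed to move between the three notions of solution in play --- the $\widetilde W^{1,2}$ solution of Lemma~\ref{l2.4}, the $W^{1,2}_{loc}$ solution furnished by the well-posedness of \eqref{Rp}, and the layer potential $S_t^L g$ --- so as to certify, at each point where uniqueness is invoked, that the relevant function indeed admits the Green-formula representation, since it is only within that class that the well-posedness of \eqref{Rp} may legitimately be applied.
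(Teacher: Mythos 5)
Your proposal takes a genuinely different route from the paper. The paper's proof of Corollary~\ref{c6.8} simply invokes Proposition~\ref{p6.5} for the invertibility of $S_0^L$, and Proposition~\ref{p6.5} is proved under assumption~(a): its lower bound $\|g\|_{L^p}\lesssim\|\nabla_\|S_0^Lg\|_{L^p}$ is obtained by running Theorem~\ref{t3.1} (the Rellich inequality) on approximating weak solutions, and Theorem~\ref{t3.1} in turn requires the Dirichlet/square-function solvability for $L^*$. You instead try to extract the Rellich-type bound $\|\partial_{\nu_A}^{\pm}S_t^Lg\|_{L^p}\lesssim\|\nabla_\|S_0^Lg\|_{L^p}$ directly from the well-posedness estimate \eqref{eq1.5} for $(R_p)$, which would make the implication genuinely self-contained. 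Your surjectivity argument, using the Green-formula representation of the well-posed solutions $u^{\pm}$ themselves, is also a clean shortcut compared to the paper's detour through Corollary~\ref{c5.39}.

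However, there is a real gap at the very place your closing paragraph flags. In the injectivity step you need to identify $S_t^Lg$ (for $g\in C_0^\infty$, with data $S_0^Lg$) as \emph{the} well-posed $(R_p)$ solution, and the uniqueness class is ``Green-formula representable.'' You certify this by citing Theorem~\ref{t5.32}, but Theorem~\ref{t5.32} is proved only for boundary data $f\in C_0^\infty(\rn)$: its Green-formula argument uses a $C_0^\infty(\ree)$ extension $F$ of $f$, and $S_0^Lg$ is not compactly supported. To extend it to data $S_0^Lg$ one must carry out an approximation $f_k\to S_0^Lg$ with $f_k\in C_0^\infty$, and then control $\partial_{\nu_A}u_k$ in $L^p$ uniformly so as to pass to the limit in the Green representation. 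In the paper this control is precisely Theorem~\ref{t3.1}, which rests on assumption~(a). You cannot substitute the well-posedness estimate \eqref{eq1.5} for this purpose without first knowing that the Lemma~\ref{l2.4} weak solutions $u_k$ \emph{are} the well-posed $(R_p)$ solutions; but that identification is itself a uniqueness statement that requires knowing $\N(\nabla u_k)\in L^p$ (equivalently, by \eqref{eq5.33}, $\partial_{\nu_A}u_k\in L^p$) ahead of time. So the argument is circular exactly where you invoke Theorem~\ref{t5.32} for non-$C_0^\infty$ data, and the Rellich ingredient you are trying to replace reappears as an essential input.
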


\bp The corollary is a combination of the boundedness results in Proposition~\ref{p5.1} and compatible invertibility of layer potentials following from Proposition~\ref{p6.5}.
We only point out that, in the presence of compatibility,  the solution with $C_0^\infty$ data furnished by layer potentials is indeed the weak solution, thus complying with our definitions. Various versions of this fact have already been used above, but let us repeat the argument. Indeed, due to the fact that by our assumptions for every $f\in C_0^\infty(\rn)$ the function $u(x,t)=S_t^{L^*} \left(S_0^{L^*}\right)^{-1}f(x),$ $(x,t)\in \RR^{n+1}_\pm$, furnishes a solution and that the involved inverses are compatible, we see that $\left(S_0^{L^*}\right)^{-1}f \in \dot L_{1/2}^2(\rn)$ and thus,
\begin{equation}\label{eq6.14}
\nabla S_t^{L^*} \left(S_0^{L^*}\right)^{-1}f \in L^2(\reu)
\end{equation}
(see, e.g., \cite{BM}, Theorem~3.1). With a little more care one can show that in fact, $S_t^{L^*} \left(S_0^{L^*}\right)^{-1}f \in \widetilde W^{1,2}(\reu)$, either by a direct computation, or, alternatively, using \eqref{eq6.14} and the procedure described in the proof of Lemma~\ref{l2.4}.
\ep

\begin{corollary}\label{c6.9} Assume that $L$ is an elliptic operator with $t$-independent coefficients, and that $L$ and $L^*$ satisfy the De Giorgi-Nash-Moser condition. 

If \eqref{Rp} is solvable in $\repm$ for $L$ for some $1<p<2+\eps$ with the solution represented by means of compatible layer potentials then \eqref{Dp'} is solvable in $\repm$ for $L^*$, with the solution represented by means of  compatible layer potentials, that is, assertion (c) implies (d) in Theorem~\ref{t1.11}. An analogous statement holds for $L^*$.
\end{corollary}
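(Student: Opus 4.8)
The plan is to derive assertion (d) from (c) by a duality argument. Assertion (c) already supplies that $S_0^L:L^p(\rn)\to\dot L^p_1(\rn)$ is bounded and boundedly invertible, so the two things left to establish are the invertibility of $S_0^{L^*}:L_{-1}^{p'}(\rn)\to L^{p'}(\rn)$ and the fact that $u:=S_t^{L^*}(S_0^{L^*})^{-1}f$ actually solves $(D_{p'})$ for $L^*$. For the invertibility I would use the symmetry relation $E^*(y,s;x,t)=\overline{E(x,t;y,s)}$ together with Fubini to see that, for $g,\varphi\in C_0^\infty(\rn)$,
\[
\int_{\rn}S_0^{L^*}g(x)\,\overline{\varphi(x)}\,dx=\int_{\rn}g(y)\,\overline{S_0^{L}\varphi(y)}\,dy\,;
\]
that is, modulo complex conjugation, $S_0^{L^*}$ is the Banach-space adjoint of $S_0^L$ with respect to the dual pairings $L^{p'}=(L^p)^*$ and $L_{-1}^{p'}=(\dot L^p_1)^*$. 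Since $C_0^\infty$ is dense in $\dot L^p_1$ and in $L_{-1}^{p'}$, this identification is rigorous, and since the adjoint of a bounded invertible operator is bounded and invertible, $S_0^{L^*}:L_{-1}^{p'}\to L^{p'}$ is invertible (its boundedness is in any case a special case of \eqref{eq5.15} at $t=0$, once the non-tangential trace discussed below is available).

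Next I would record the two mapping properties of the single layer potential of $L^*$ that control the Dirichlet data: for $p'$ conjugate to $p\in(1,2+\eps)$, Proposition~\ref{p5.1} and Remark~\ref{r5.11} give $S^{L^*}_{\pm t}:L_{-1}^{p'}(\rn)\to T^{p'}_\infty(\reu)$ (this is \eqref{eq5.16}, reformulated as \eqref{eq5.15}) and $t\nabla S^{L^*}_{\pm t}:L_{-1}^{p'}(\rn)\to T^{p'}_2(\reu)$ (this is \eqref{eq5.16-bis}); in other words, both $\|N_*(S^{L^*}_{\pm t}h)\|_{L^{p'}}$ and $\|\A(t\nabla S^{L^*}_{\pm t}h)\|_{L^{p'}}$ are controlled by $\|h\|_{L_{-1}^{p'}}$.

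Finally, to solve $(D_{p'})$ for $L^*$, I would fix $f\in C_0^\infty(\rn)$ (the general case $f\in L^{p'}$ following by density together with the convergence results recorded after Lemma~\ref{l2.2}), set $h:=(S_0^{L^*})^{-1}f\in L_{-1}^{p'}$ and $u:=S_t^{L^*}h$. Then $u\in W^{1,2}_{\loc}(\reu)$ is a weak solution of $L^*u=0$; by the previous paragraph $\|N_*(u)\|_{L^{p'}}+\|\A(t\nabla u)\|_{L^{p'}}\lesssim\|h\|_{L_{-1}^{p'}}\lesssim\|f\|_{L^{p'}}$, which yields \eqref{eq1.4} and the square-function bound \eqref{eq1.12}; and $u\to f$ non-tangentially, since writing $h=\sum_j\partial_j h_j$ with $h_j\in L^{p'}$ and approximating the $h_j$ in $L^{p'}$ by $C_0^\infty$ functions, the corresponding smooth single layer potentials of $L^*$ converge to their boundary traces uniformly on compact subsets of $\rn$ (by interior estimates and the construction of $E^*$), hence non-tangentially, and the general case then follows from the maximal-function bound just cited, so that $u\to S_0^{L^*}h=f$ n.t. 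This establishes solvability of $(D_{p'})$ for $L^*$ together with the layer-potential representation $u=S_t^{L^*}(S_0^{L^*})^{-1}f$ asserted in (d); the identical argument applies in $\RR^{n+1}_-$ and with the roles of $L$ and $L^*$ interchanged. The step requiring most care is the duality identity of the first paragraph — correctly matching the concrete integral operator $S_0^{L^*}$ with the abstract adjoint of $S_0^L$ on the right pair of spaces while tracking the complex conjugations — and, to a lesser extent, the routine-but-not-automatic verification of the non-tangential limit for data in $L_{-1}^{p'}$.
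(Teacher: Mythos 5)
Your proposal is correct and takes essentially the same route as the paper's proof: both rest on the observation that $S_0^{L^*}:L_{-1}^{p'}\to L^{p'}$ is the adjoint (up to conjugation) of the bounded invertible map $S_0^L:L^p\to \dot L^p_1$ supplied by (c), hence itself invertible, combined with the $L_{-1}^{p'}$-to-$T^{p'}_\infty$ mapping property of $S_t^{L^*}$ from \eqref{eq5.16} (via the identification $h=\dv_\|\vec H$ with $\vec H\in L^{p'}$). You spell out the duality identity and the non-tangential limit a bit more than the paper, which simply asserts that solvability via \eqref{eq1.15} follows ``immediately,'' but the argument is the same one.
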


\bp  We note first that for any fixed $t>0$ the operator $S^{L^*}_t$ is the Hermitian adjoint of $S^{L}_t$
(hence, in particular, it is bounded $(S^{L^*}_t)^\pm: L^{p'}_{-1}(\rn)\to  L^{p'}(\rn)$ uniformly in $t$ -- the fact that one can also deduce from \eqref{eq5.16}). Moreover, there exists a bounded operator $(S^{L^*})^\pm: L^{p'}_{-1}(\rn)\to  L^{p'}(\rn)$, such that for every fixed $f\in L^{p'}(\rn)$ the sequence $(S^{L^*}_t)^\pm f$ converges to $(S^{L^*})^\pm f$ weakly in $L^{p'}$ and finally, the operator $S^{L^*}$ is the Hermitian adjoint of $S^{L}$. All this is detailed in \cite{HMiMo}, \cite{HMM}. 

In the assumptions of the present Corollary, the operator $(S^L)^\pm: L^p(\rn)\to \dot L^p_1(\rn)$  is bounded and invertible. It follows that the operator 
$$(S^{L^*})^\pm: L^{p'}_{-1}(\rn)\to  L^{p'}(\rn)$$ is bounded and invertible.  Moreover, compatibility of inverses would be inherited by the dual. Taking in account \eqref{eq5.16},  the solvability of the Dirichlet problem via the representation \eqref{eq1.15} essentially follows immediately once we observe that $h \in L_{-1}^{p'}$ if and only if there exists $\vec H \in L^{p'}$ such that $\div_{\|} \vec H=h$, with the accompanying norm equivalence.  Note that we already showed in the proof of Corollary~\ref{c6.8} that, in the presence of compatibility,  the solution with $C_0^\infty$ data furnished by layer potentials is indeed the weak solution, thus complying with our definitions.
\ep

\begin{proposition}\label{p6.13}
Assume that $L$ is an elliptic operator with $t$-independent coefficients, and that $L$ and $L^*$ satisfy the De Giorgi-Nash-Moser condition. 

If \eqref{Dp'} is solvable in $\repm$ for $L^*$ for some $1<p<2+\eps$ with the solution represented by means of compatible layer potentials then \eqref{Dp'} is well-posed in $\repm$ for $L^*$, that is, assertion (d) implies (e) in Theorem~\ref{t1.11}. An analogous statement holds for $L$.
\end{proposition}

\bp First, we show that the solvability of \eqref{Dp'} with compatible layer potential representations  implies that for every $f\in L^{p'}(\rn)$ there exists a solution to the Dirichlet problem \eqref{Dp'} with the appropriate convergence to the boundary data. For now, as in the proof of Corollary~\ref{c6.9}, we have only claimed convergence weakly in $L^{p'}(\rn)$.

Going further, we show that the solution converges to the boundary data strongly in $L^{p'}(\rn)$. To this end, take first $f\in C_0^\infty(\rn)$ and recall that $(S_t^{L^*}\nabla_\|)$ is bounded in $L^{p'}(\rn)$ uniformly in $t$ by \eqref{eq5.16}. Then for every $g\in L^p(\rn)$, $t>0$, we have 
\begin{multline} \int_{\rn} (S_t^{L^*}\nabla_\|-S^{L^*}\nabla_\|) f(x)\,g(x) \, dx=\int_{\rn} {\rm div}_{\|} f(x)\,(S_t^{L}-S^L)g(x) \, dx\\[4pt]
\lesssim \int_{\rn} {\rm div}_{\|} f(x)\,t \N(\nabla (S_t^{L}-S^L))g(x) \, dx \lesssim t  \| {\rm div}_{\|} f\|_{L^{p'}(\rn)} \|\N(\nabla (S_t^{L}-S^L))g\|_{L^p(\rn)} \\[4pt]   \lesssim t  \| {\rm div}_{\|} f\|_{L^{p'}(\rn)} \|g\|_{L^p(\rn)},
\end{multline}
by Lemma~\ref{l2.2} and \eqref{eq5.5}. It follows that 
\begin{multline*}
\|(S_t^{L^*}\nabla_\|-S^{L^*}\nabla_\|) f\|_{L^{p'}} =\sup_{g\in L^p:\,\|g\|_{L^p}=1} \int_{\rn} (S_t^{L^*}\nabla_\|-S^{L^*}\nabla_\|) f(x)\,g(x) \, dx \\[4pt] \lesssim \sup_{g\in L^p:\,\|g\|_{L^p}=1}\left(t  \| {\rm div}_{\|} f\|_{L^{p'}(\rn)} \|g\|_{L^p(\rn)}\right) \leq C_f t.
\end{multline*}
Hence, $\|(S_t^{L^*}\nabla_\|-S^{L^*}\nabla_\|) f\|_{L^{p'}}$ converges to 0 as $t\to 0$. One concludes that for every fixed  $f\in C_0^\infty(\rn)$ the sequence $(S_t^{L^*}\nabla_\|)f$ converges to its boundary data strongly in $L^{p'}$. Given the uniform in $t\geq 0$ bounds on the operator $(S_t^{L^*}\nabla_\|)$, it follows that for every $f\in L^{p'}(\rn)$ the sequence $(S_t^{L^*}\nabla_\|)f$ converges to its boundary data strongly in $L^{p'}$, as desired. 

The non-tangential and square function estimates for the resulting solution follow from \eqref{eq5.16} and \eqref{eq5.16-bis}, respectively.

Finally, we recall that uniqueness follows from \cite{HMM}. Indeed, the solvability of \eqref{Dp'} with compatible layer potential representations implies, in particular, solvability (and even well-posedness) of \eqref{Rp} by Corollary~\ref{c5.39} and Lemma~\ref{l5.43}. This, in turn, implies solvability of \eqref{Rp} as defined in \cite{HMM} and hence, we have uniqueness for \eqref{Dp'} by \cite{HMM}, Proposition~8.19. This finishes the proof of assertion (e) in Theorem~\ref{t1.11}. \ep

\vskip 0.08 in 
\noindent {\it Proof of Theorem~\ref{t1.11}.} Let us finally combine the results. Under the assumptions of Theorem~\ref{t1.11}, we have $(a)\Longrightarrow (b)$ (Corollary~\ref{c5.39}), $(b)\Longrightarrow (c)$ (Corollary~\ref{c6.8}), $(c)\Longrightarrow (d)$ (Corollary~\ref{c6.9}), $(d)\Longrightarrow (e)$ (Proposition~\ref{p6.13}), $(e)\Longrightarrow (a)$ (by definition).

Finally, $(b)\Longleftrightarrow (f)$ (Lemma~\ref{l5.43}). \ep

\section{Proofs of Corollaries~\ref{c1.16}--\ref{c1.19} and further remarks}\label{sPrCor}

\noindent {\it Proof of Corollary~\ref{c1.16}.} The De Giorgi-Nash-Moser bounds are stable under $L^\infty$ perturbation of the coefficients. Thus, if $L_0$ falls under the scope of Theorem~\ref{t1.11}, so that $L_0$ and $L_0^*$ both have the De Giorgi-Nash-Moser property, then so do $L$ and $L^*$. Thus, the conditions $(a)-(f)$ of Theorem~\ref{t1.11} are equivalent for the operator $L$ too, and it is sufficient to prove one of them. It is easier to access $(c)$ or $(d)$. Let us focus on (c). 

The fact that  $f\mapsto \N(\nabla S_{t}^Lf)$ is bounded in $L^p(\rn)$, $1<p<2+\eps$, for any $t$-independent operator $L$ (with some $\eps$ depending on the ellipticity constant) is known -- see the discussion of \eqref{eq5.5}. Respectively, 
$$ S_0^L:L^p(\rn)\to \dot L_1^p(\rn),$$ 
is bounded by Lemma~\ref{l2.2}. The operator norm in both cases depends on  standard constants only. 

As far as invertibility is concerned, 
we have by analytic perturbation theory (see the Appendix for details)
\begin{equation}\label{eqAnalPert} \|\nabla_\|S_0^L-\nabla_\| S_0^{L^0}\|_{L^p(\rn)\to L^p(\rn)}\leq C \|A_0-A\|_{L^\infty(\rn)},
\end{equation} 
and hence, invertibility of $S_0^L$ follows from that of $S_0^{L^0}$ via the Neumann series. 
Moreover, by the same analytic perturbation argument we can establish that 
\begin{equation}\label{eqAnalPert-bis} \|\nabla_\|S_0^L-\nabla_\| S_0^{L^0}\|_{L^p(\rn)\cap \dot L^2_{-1/2}(\rn)\to L^p(\rn)\cap \dot L^2_{-1/2}(\rn)}\leq C \|A_0-A\|_{L^\infty(\rn)},
\end{equation} 
which assures compatible invertibility, as desired. 
\ep

\noindent {\it Proof of Corollary~\ref{c1.19}.} Recall that the De Giorgi-Nash-Moser bounds for solutions inside the domain and at the boundary are always valid for the operators with real coefficients. 

The validity of $(a)$ for an elliptic operator with real, $t$-independent, possibly non-symmetric, coefficients is the main result of \cite{HKMP}. To be precise, note that the $A^\infty$ property of harmonic measure proved in \cite{HKMP} yields solvability of the \eqref{Dp'} problem for some $1<p'<\infty$ (see \cite{K}, Theorem 1.7.3) and that the solvability as defined in \cite{K}, Theorem 1.7.3, (ii), exactly coincides with our notion due to the fact that the {\it classical} solution of \cite{K} is indeed our weak solution (see the construction on p. 5 of \cite{K} and the accompanying discussion). 
Then the validity of all ascertions $(b)-(f)$ of Theorem~\ref{t1.11} for such an operator follows from Theorem~\ref{t1.11}, and the claimed perturbation results follow from Corollary~\ref{c1.16}.  
\ep
Finally, let us make the following remark. 

\vskip 0.08 in \noindent {\it Remark}. Theorem~\ref{t1.11} is stated in terms of the simultaneous solvability of the corresponding boundary value problems in both lower and upper half spaces. This is used when proving layer potential representations of solutions (or, to be more precise, invertibility results on the boundary). However, one can establish that the solvability of the Dirichlet problem implies the solvability of the corresponding Regularity problem working in one selected half-space. Indeed, for $C_0^\infty$ data and the corresponding weak solution one can directly use Theorem~\ref{t3.1}, representation formula \eqref{eq5.34}, and \eqref{eq5.21}, \eqref{eq5.5}, to get the desired bounds.  Theorem~\ref{t3.1} actually ensures the analogue of the Rellich-type estimate in the same half-space as that of solvability of the Dirichlet problem, and the results \eqref{eq5.21}, \eqref{eq5.5} hold both for upper and lower half space independently of any solvability assumptions.

\section{Appendix: Analyticity of $\nabla S_t$}\label{sAppendix}

In this section we present a discussion of \eqref{eqAnalPert}. It is essentially treated within the framework of the analytic perturbation theory in \cite{Ka} (and was already used, e.g., in \cite{AAAHK}, \cite{B}). However, a detailed argument for the particular case at hand does not seem to be available in the literature, and for completeness, we provide it here.

Let $A_0$ be elliptic, $(n+1)\times (n+1)$, $t$-independent, and bounded measurable, 
and let $A_z:= A_0 + z M$, where
$M$ is $(n+1)\times (n+1)$, $t$-independent, and bounded measurable,
with $\|M\|_{L^\infty(\rn)} \leq 1$.
Set $L_0:= -\dv A_0\nabla$, and $L_z:= -\dv A_z \nabla$, and suppose that null solutions of 
$L_0$ satisfy 
the De Giorgi-Nash-Moser bounds.
For $|z|$ small enough, say $|z|<\eps_0$, we have that $L_z$ is also elliptic, and satisfies the De Giorgi-Nash-Moser bounds.
By ellipticity, for $|z|<\eps_0$,
$$\nabla L_z^{-1}\dv : L^2(\ree) \to L^2(\ree)\,,$$
or equivalently,
$$L_z^{-1}: \dot{W}^{-1,2}(\ree)\to \dot{W}^{1,2}(\ree)\,.$$
Moreover, we have the Taylor expansion
$$\nabla  L_z^{-1}\dv = \nabla L_0^{-1} \dv \sum_{k=0}^\infty \left(zM\nabla L_0^{-1}\dv\right)^k,$$
which is convergent, as a mapping from $\dot{W}^{-1,2}(\ree)$ to $\dot{W}^{1,2}(\ree)$,
if $\eps_0$ is small enough.   Therefore,
the mapping $z \to L_z^{-1}$, taking values in the space of bounded operators
from $\dot{W}^{-1,2}(\ree)$ to $\dot{W}^{1,2}(\ree)$, is analytic in a neighborhood of $z=0$.
The same is true for $L_z^*$, so by trace theory, we then have that
$z\to \tr\circ (L_z^*)^{-1}$ is an analytic mapping, taking values in the space of bounded operators 
from $\dot{W}^{-1,2}(\ree)$ to $\dot{H}^{1/2}(\rn)$.  Here $\tr$ denotes the trace operator, on
$\rn\times \{0\}=\partial(\reu)$.   We then define the single layer potential for $L_z$, denoted by
$\s^{L_z}$, 
as the adjoint of the operator $\tr\circ (L_z^*)^{-1}$, so that
$z\to \s^{L_z}$ is an analytic mapping taking values in the space of bounded operators 
from $\dot{H}^{-1/2}(\rn)$ to $\dot{W}^{1,2}(\ree)$.
By trace theory again, we have that $z\to S_t^{L_z}$ is an analytic mapping
taking values in the space of bounded operators 
from $\dot{H}^{-1/2}(\rn)$ to $\dot{H}^{1,2}(\rn)$, where $S_t^{L_z}$ denotes the restriction
of $\s^{L_z}$ to the hyperplane $x_{n+1} = t$; i.e., 
$z\to \nabla_\|S_t^{L_z}$ is an analytic mapping
taking values in the space of bounded operators 
from $\dot{H}^{-1/2}(\rn)$ to $\dot{H}^{-1,2}(\rn)$.
Thus,
$$z\to \langle \nabla_\|S_t^{L_z} f, g\rangle$$
is an analytic function, for every $f\in\dot{H}^{-1/2}(\rn)$, and every $g \in\dot{H}^{1/2}(\rn)$,
in particular, for every pair $f,g \in C^\infty_0(\rn)$.
By \cite[Theorem 1.1]{HMiMo}, we have that 
$$\sup_{|z|<\eps_0}\sup_{t}\| \nabla_\|S_t^{L_z}\|_{L^p(\rn)\to L^p(\rn)}  \leq C_p \,,\quad 
1<p<2+\epsilon\,,$$
with $C_p$ depending only on $p$, ellipticity, dimension, and the the De Giorgi-Nash-Moser constants.
Therefore by \cite[p. 365]{Ka},
since $C^\infty_0$ is dense in $L^p$ and $L^{p'}$,  we have that for each fixed $t$,
$z\to \nabla_\|S_t^{L_z}$ is an analytic mapping, taking values in the space of bounded
operators on $L^p(\rn)$, in the disk $|z|<\eps_0$.  This means that
$$\sup_t\big\|\frac{d}{dz}\,\nabla_\|S_t^{L_z}\big\|_{L^p(\rn)\to L^p(\rn)}  \leq C_p\,,$$
so that
\begin{equation}\label{eq1*}
\sup_t\|\nabla_\|S_t^{L_z} -\nabla_\|S_t^{L_0}\|_{L^p(\rn)\to L^p(\rn)}
\leq C_p |z|\,.
\end{equation}
Now, given $A_0$ as above, take $M:= (A_1-A_0)/\|A_1-A_0\|_{L^\infty(\rn)}$, so that
$A_1= A_0 +z_1 M$, with $z_1= \|A_1-A_0\|_{L^\infty(\rn)}$.  If $\|A_1-A_0\|_{L^\infty(\rn)}
<\eps_0$, by \eqref{eq1*}, we have that
$$\sup_t\|\nabla_\|S_t^{L_1} -\nabla_\|S_t^{L_0}\|_{L^p(\rn)\to L^p(\rn)}
\leq\, C_p\,\|A_1-A_0\|_{L^\infty(\rn)}\,,$$
as desired.

\end{document}